\newcommand{\calVg}{\calV^\kappa_{\Sigma}(\gog)}
\newcommand{\oLie}{\overline{\Lie}}
\newcommand{\gogl}{\check{\gog}}
\newcommand{\vac}{|0\rangle}
\newcommand{\Zcrit}{Z_{\kappa_c}(\hat{\gog}_{\Sigma})}
\newcommand{\Vcan}{V^{\can}}
\newcommand{\oSigma}{{ \overline \Sigma}}
\newcommand{\uch}{u}
\newcommand{\ulie}{\mathbf{u}}
\newcommand{\fsonoz}{fsonoz\xspace}
\newcommand{\VkSg}{\calV^\kappa_{\Sigma}(\gog)}
\newcommand{\VkSgstar}{\calV^\kappa_{\overline \Sigma^*}(\gog)}
\newcommand{\calVstar}{\calV_{\overline \Sigma^*}}
\newcommand{\Vkg}{V^\kappa(\gog)}
\newcommand{\vbasic}{\calV_{\text{basic}}}
\newcommand{\vbasicpiu}{\calV_{\text{basic}+}}
\newcommand{\vcom}{\calV_{\text{com}}}
\newcommand{\vcompiu}{\calV_{\text{com}+}}
\newcommand{\am}[1]{{\color{blue}\textsf{[[AM: #1]]}}}
\newcommand{\lc}[1] { {\color{orange} \textsf{[[LC: #1]]} } }
\DeclareMathOperator{\cont}{cont}
\newcommand{\QCC}{QCC\xspace}
\newcommand{\QCCF}{QCCF\xspace}
\newcommand{\tensor}[1]{{\stackrel{#1}{\otimes}}}
\newcommand{\exttensor}[1]{{\stackrel{#1}{\boxtimes}}}
\newcommand{\calHom}{\operatorname{\calH\! \it{om}}}
\newcommand{\calHomcont}{\operatorname{\calH\! \it{om}}^{\cont}}
\newcommand{\Homcont}{\operatorname{Hom}^{\cont}}
\newcommand{\piuno}{\calR}
\newcommand{\pbarO}{{\calO_{\overline{ \Sigma}}}}
\newcommand{\pbarOpoli}{{\calO_{\overline{\Sigma}^*}}}
\newcommand{\pbarOmega}{{\Omega^1_{\overline{\Sigma}}}}    
\newcommand{\pbarOmegapoli}{{\Omega^1_{\overline{\Sigma}^*}}}
\newcommand{\pbarD}{\calD_{\overline{\Sigma}}}
\newcommand{\pbarDpoli}{{\calD_{\overline{\Sigma}^*}}}
\newcommand{\pbarOm}[1]        {\calO^{#1}_{\overline{\Sigma}}}
\newcommand{\pbarOpolim}[1]    {\calO^{#1}_{\overline{\Sigma}^*}}
\newcommand{\pbarDm}[1]        {\calD^{#1}_{\overline{\Sigma}}}
\newcommand{\unoU}{{\mathbf{1}}}
\newcommand{\Tan}{T}
\DeclareMathOperator{\Res} {Res}
\newcommand{\pbarOpoliq}{{\calO^2_{\overline{\Sigma}^*}}}
\newcommand{\otimesr}{\overrightarrow{\otimes}}
\newcommand{\otimesl}{\overleftarrow{\otimes}}
\newcommand{\otimesst}{\stackrel{*}{\otimes}}
\newcommand{\otimessh}{\stackrel{!}{\otimes}}
\newcommand{\surjmap}{\twoheadrightarrow}
\newcommand{\op}{\text{op}}
\newcommand{\gsig}[1]{\hat{\gog}_{\Sigma,#1}}
\newcommand{\ugsig}[1]{\calU_{#1}(\hat{\gog}_\Sigma)}
\newcommand{\Zenv}[2]{Z_{#1}(\hat{\gog}_{#2})}
\DeclareMathOperator{\ffset}{ffSet}
\DeclareMathOperator{\tmod}{-mod}
\DeclareMathOperator{\Aut}{Aut}
\newcommand{\Autzero}[1]{\Aut^+_{#1}}
\newcommand{\Autpiu}[1]{\Aut^0_{#1}}
\DeclareMathOperator{\Aff}{Aff}
\DeclareMathOperator{\Sch}{Sch}
\DeclareMathOperator{\Sp}{Sp}
\DeclareMathOperator{\Fun}{Fun}
\theoremstyle{plain}
\newtheorem{lemma}{Lemma}[subsection]
\newtheorem{theorem}[lemma]{Theorem}
\newtheorem{proposition}[lemma]{Proposition}
\newtheorem{corollary}[lemma]{Corollary}
\theoremstyle{definition}
\newtheorem{definition}[lemma]{Definition}
\newtheorem{remark}[lemma]{Remark}
\newtheorem{example}{Example}
\theoremstyle{remark}
\newtheorem*{remarkno}{Remark}
\theoremstyle{plain}
\newtheorem*{theoremno}{Theorem}
\theoremstyle{definition}
\theoremstyle{remark}
\newtheorem{ntz}[lemma]{Notation}
\newcommand{\mA}{\mathbb A}
\newcommand{\mC}{\mathbb C}
\newcommand{\mF}{\mathbb F} 
\newcommand{\mG}{\mathbb G}
\newcommand{\mN}{\mathbb N}
\newcommand{\mV}{\mathbb V}
\newcommand{\mZ}{\mathbb Z}
\newcommand{\calA}{\mathcal A}
\newcommand{\calB}{\mathcal B} 
\newcommand{\calD}{\mathcal D}
\newcommand{\calE}{\mathcal E} 
\newcommand{\calF}{\mathcal F} 
\newcommand{\calG}{\mathcal G}
\newcommand{\calH}{\mathcal H} 
\newcommand{\calI}{\mathcal I} 
\newcommand{\calJ}{\mathcal J}
\newcommand{\calK}{\mathcal K} 
\newcommand{\calL}{\mathcal L} 
\newcommand{\calM}{\mathcal M}
\newcommand{\calO}{\mathcal O}
\newcommand{\calR}{\mathcal R} 
\newcommand{\calT}{\mathcal T} 
\newcommand{\calU}{\mathcal U} 
\newcommand{\calV}{\mathcal V}
\newcommand{\calW}{\mathcal W} 
\newcommand{\calY}{\mathcal Y}
\newcommand{\goF}{\mathfrak F}
\newcommand{\gob}{\mathfrak b}
\newcommand{\gog}{\mathfrak g}
\newcommand{\goh}{\mathfrak h}
\newcommand{\gol}{\mathfrak l} 
\newcommand{\gom}{\mathfrak m}
\newcommand{\gos}{\mathfrak s}
\newcommand{\gra}{\alpha} 
\newcommand{\grb}{\beta}
\newcommand{\gre}{\varepsilon}
\newcommand{\grf}{\varphi}
\newcommand{\ra}       {\rightarrow}
\newcommand{\lra}      {\to}
\newcommand{\isocan}   {\simeq}
\renewcommand{\geq}    {\geqslant}
\renewcommand{\leq}    {\leqslant}
         \newcommand{\mand}     {\text{ and }}
\DeclareMathOperator{\Hom}  {Hom}
\DeclareMathOperator{\End}  {End}
\DeclareMathOperator{\Der}  {Der}
\DeclareMathOperator{\Sym}  {Sym}
\DeclareMathOperator{\Spec} {Spec}
\DeclareMathOperator{\Qco}  {QCoh}
\DeclareMathOperator        {\Lie}{Lie}
\DeclareMathOperator        {\Liepoli}{Lie_{\bar \Sigma^* }}
\newcommand{\limind}{\varinjlim}
\newcommand{\limpro}{\varprojlim}
\DeclareMathOperator{\Set}{Set}
\DeclareMathOperator{\id}{id}
\DeclareMathOperator{\univ}{univ}
\DeclareMathOperator{\triv}{triv}
\DeclareMathOperator{\can}{can}
\DeclareMathOperator{\ev}{ev}
\DeclareMathOperator{\loc}{-loc}
\DeclareMathOperator{\Ran}{\mathbf{Ran}}
\DeclareMathOperator{\fact}{\mathbf{fact}}
\DeclareMathOperator{\Op}{Op}
\DeclareMathOperator{\Conn}{Conn}
\DeclareMathOperator{\fil}{fil}
\DeclareMathOperator{\ad}{ad\,}
\newcommand{\lowOS}{\calO_S}
\title{The factorizable Feigin-Frenkel center}
\author{Luca Casarin, Andrea Maffei}
\begin{document}

\maketitle
\begin{center}
To George Lusztig, with admiration
\end{center}

\bigskip

\begin{flushright}
	\begin{minipage}{0.45\textwidth} 
		\begin{it}{La quinta luce, ch'è tra noi più bella,
			
			spira di tale amor, che tutto 'l mondo
			
			là giù ne gola di saper novella:
			
			\smallskip
			
			entro v'è l'alta mente u' sì profondo
			
			saver fu messo, che, se 'l vero è vero,
			
			a veder tanto non surse il secondo}\end{it}
		
		\vspace{2mm} 
		\begin{small}(Dante, La Divina commedia, Paradiso, canto X)\end{small}
	\end{minipage}
\end{flushright}

\medskip

\begin{center}
	\textbf{Abstract}
	
	\smallskip
	\justifying
	\noindent We prove a factorizable version of the Feigin-Frenkel theorem on the center of the completed enveloping algebra of the affine Kac-Moody algebra attached to a simple Lie algebra at the critical level. On any smooth curve $C$ we consider a sheaf of complete topological Lie algebras whose fiber at any point is the usual affine algebra at the critical level and consider its sheaf of completed enveloping algebras. We show that the center of this sheaf is a factorization algebra and establish that it is canonically isomorphic, in a factorizable manner, with the factorization algebra of functions on opers for the Langlands dual Lie algebra on the pointed disk. 
\end{center}

\tableofcontents

\section{Introduction}

In this paper we prove a factorizable version of the Feigin-Frenkel Theorem on the center of the completed enveloping algebra of  the affine Kac-Moody algebra at the critical level attached to a simple Lie algebra $\gog$.
So, let us start the introduction by recalling the Feigin-Frenkel Theorem and saying some more words on what it means to prove a factorizable version of it. For convenience, we state it in its coordinate free version, so we consider a formal disk $D$ without a specified coordinate and its pointed version $D^*$. These appear, for instance, as formal disks around a point $x \in C$ of a smooth curve, where one can consider $\calO_x := \varprojlim_n \calO_{C,x}/\gom^n_x$, $D_x = \Spec \calO_x$ and $D^*_x = \Spec \mathrm{Frac}(\calO_x)$. 

Let $\gog$ be a finite simple Lie algebra and let $\hat{\gog}_{D,\kappa}$  be the affine Kac-Moody algebra attached to $D$ and $\gog$ of level $\kappa$: if $\calK$ is the (complete topological) ring of functions on $D^*$ then $\hat{\gog}_{D,\kappa} = \gog\otimes\calK \oplus \mC\mathbf{1}$, with the usual commutation formulas. Recall also the definition of $\Op_\gog(C)$, the space of $\gog$-opers on a smooth curve $C$ (also $C = D,D^*$, are allowed), which, given a Borel $B \subset G$ of the adjoint group $G$ relative to $\gog$, classifies certain connections on $G$-torsors on $C$. For the precise definitions and related constructions we refer to \cite[Sec. 2, Sec. 4]{frenkel2007langlands} and to \cite{BDopers}.

	\begin{theoremno}[Feigin-Frenkel \cite{feigin1992affine}]\label{thm:knownfeiginfrenkel}
		Let $\gog$ be a simple Lie algebra over $\mC$, let $\hat{\gog}_{D,\kappa_c}$ be the affine algebra at the critical level $\kappa_c = -\frac{1}{2}\kappa_{\gog}$ (here $\kappa_{\gog}$ is the non-normalized Killing form) attached to $D$ and let $Z_{\kappa_c}(\hat{\gog}_D)$ be the center of its completed enveloping algebra. There is a canonical isomorphism 
		\[
			Z_{\kappa_c}(\hat{\gog}_D) = \mC[\Op_{\gogl}(D^*)],
		\]
		where $\gogl$ is the Langlands dual Lie algebra of $\gog$, while $\mC[\Op_{\gogl}(D^*)]$ is the algebra of functions on the space of $\gogl$-opers over the pointed disk. 
	\end{theoremno}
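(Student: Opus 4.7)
The plan is to first pass from the center of the completed enveloping algebra to a vertex-algebraic object. Choose a formal coordinate $t$ on $D$, giving identifications $\calK = \mC((t))$ and $\hat{\gog}_{D,\kappa_c} = \gog((t)) \oplus \mC\mathbf{1}$. The vacuum module $V^{\kappa_c}(\gog) := \calU(\hat{\gog}_{D,\kappa_c})/\calU(\hat{\gog}_{D,\kappa_c})(\gog[[t]] + \mC(\mathbf{1}-1))$ is a vertex algebra of level $\kappa_c$, and a standard argument (the state–field correspondence together with strong reconstruction) identifies $Z_{\kappa_c}(\hat{\gog}_D)$ with a completion of the space of $\gog[[t]]$-invariants
\[
  \goz(\hat{\gog}) := V^{\kappa_c}(\gog)^{\gog[[t]]}
\]
equipped with its commutative vertex algebra structure. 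So the task reduces to identifying $\goz(\hat{\gog})$, as a commutative vertex algebra with a Harish-Chandra action of $\Aut D$, with the algebra of functions on $\Op_{\gogl}(D^*)$.

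The next step is to introduce the Wakimoto free field realization: choose a Borel subalgebra $\gob \subset \gog$ and realize $V^{\kappa_c}(\gog)$ inside the tensor product of a $\beta\gamma$-system on the big cell of the flag variety and a Heisenberg vertex algebra $\pi^0$ of rank equal to $\rank\gog$. At the critical level the image of $\goz(\hat{\gog})$ lies entirely inside the Heisenberg factor $\pi^0$, and the key computation, following Feigin-Frenkel, expresses it as the joint kernel of the simple screening operators
\[
  S_i : \pi^0 \lra \pi^{-\alpha_i}, \qquad i = 1, \ldots, \rank\gog,
\]
one for each simple root of $\gog$. These screening operators are degenerations, at $\kappa = \kappa_c$, of the usual quantum group screenings; at the critical level they become compatible with the differential $T = L_{-1}$ in a way that makes the joint kernel naturally a classical (commutative) object.

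The core of the proof is then a purely combinatorial/cohomological computation identifying
\[
  \bigcap_{i=1}^{\rank\gog} \ker(S_i : \pi^0 \to \pi^{-\alpha_i}) \;\cong\; \calW_\infty(\gogl),
\]
where the right-hand side is the classical $\calW$-algebra of the Langlands dual Lie algebra. The appearance of $\gogl$, rather than $\gog$, comes from the fact that at the critical level the Heisenberg lattice appearing on the Wakimoto side is (up to rescaling by $\kappa - \kappa_c$) naturally the coroot lattice of $\gog$, which is the root lattice of $\gogl$. One checks that $\calW_\infty(\gogl)$ acquires precisely the $\Aut D$-equivariant structure expected of functions on opers, and the classical Drinfeld–Sokolov / Miura map identifies $\calW_\infty(\gogl) \cong \mC[\Op_{\gogl}(D^*)]$ by matching generators with the fundamental oper invariants of degrees $d_1+1, \ldots, d_r+1$ (the exponents of $\gogl$ plus one).

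The main obstacle is the description of $\goz(\hat{\gog})$ as the intersection of kernels of screenings: this requires a nontrivial cohomological argument (essentially computing the cohomology of a BGG-type resolution for the Wakimoto modules, and showing that only the top degree contributes at the critical level) together with an independent upper bound on the size of $\goz(\hat{\gog})$ obtained by a filtration argument — one filters $V^{\kappa_c}(\gog)$ by PBW degree, computes the associated graded center to be $(\Sym\gog((t))/t\Sym\gog[[t]])^{\gog[[t]]}$, and uses Kostant's theorem on invariants to match ranks. The coordinate-free statement and the identification with $\Op_{\gogl}(D^*)$ then follow by showing that both sides carry canonical $\Aut D$-equivariant structures under which the isomorphism becomes intrinsic.
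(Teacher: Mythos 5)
The paper does not prove this statement: it is the classical Feigin--Frenkel theorem, cited from \cite{feigin1992affine} and \cite{frenkel2007langlands} and used as a black box throughout (the whole point of the paper is the \emph{factorizable} upgrade, Theorem~\ref{thm:mainteointro}). There is, however, a detailed recollection of the known argument in Remark~\ref{rmk:classicalfeifreiso}, and your sketch follows the same overall route: pass to the vertex algebra center $\zeta(\hat\gog) = \zeta(V^{\kappa_c}(\gog))$, compute it via the Wakimoto realization and the screening operators $S_i : \pi^0 \to \pi^{-\alpha_i}$, identify the joint kernel with the classical $\calW$-algebra of $\gogl$ via the Miura/Drinfeld--Sokolov map, use the PBW filtration together with Kostant's theorem for the upper bound, and endow everything with a compatible $\Aut O$ action so the identification becomes coordinate-free. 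That core is correct.

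The step you state too loosely is the opening reduction, where you claim that ``a standard argument (the state--field correspondence together with strong reconstruction) identifies $Z_{\kappa_c}(\hat\gog_D)$ with a completion of $\goz(\hat\gog)$.'' This is not a completion, and reconstruction alone does not give it: $\mC[\Op_{\gogl}(D^*)] \cong \overline{\Sym}\bigl((\Vcan)^*\otimes\Omega^1_{D^*}\bigr)$ contains the extra generators $(\Vcan)^*\otimes\Omega^1_D$ that are absent from any completion of $\mC[\Op_{\gogl}(D)] \cong \Sym\bigl((\Vcan)^*\otimes\Omega^1_{D^*}/\Omega^1_D\bigr)$. As Remark~\ref{rmk:classicalfeifreiso} makes explicit, the passage from $\zeta(\hat\gog)$ to $Z_{\kappa_c}(\hat\gog_D)$ is its own two-step theorem: (a) the canonical map $\Phi^\zeta_{\text{FF}} : \tilde{U}\bigl(\zeta(\hat\gog)\bigr) \to Z_{\kappa_c}(\hat\gog_D)$ from the completed chiral enveloping algebra is an isomorphism, proved by a filtration/associated-graded/invariant-theory argument that is separate from the one controlling $\zeta(\hat\gog)$ itself; and (b) there is an $\Aut O$-equivariant isomorphism $\gamma : \tilde{U}\bigl(\mC[\Op_{\gogl}(D)]\bigr) \cong \mC[\Op_{\gogl}(D^*)]$. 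Your proposal would be essentially complete if ``a standard argument'' were replaced by these two steps and the $\Aut O$-equivariance of $\gamma$ were flagged as what makes the resulting identification canonical.
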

    
    This result was conjectured by Drinfeld and was proved by Feigin and Frenkel in \cite{feigin1992affine}. A different proof was given in \cite{frenkeladvances} (see also \cite{frenkel2007langlands} for an exposition of this proof). Finally a more geometric proof was given by Raskin \cite{raskinproof}.

    In the case of $\gos\gol(2)$ a generalization of this Theorem was given in \cite{fortuna2022local} in the case of two singular points. This result was generalized to the case of arbitrary $\gog$ and an arbitrary number of points in \cite{cas2023}. In this paper we give generalizations of the results in \cite{cas2023} to a global geometric setting and in a coordinate independent manner.

	The Feigin-Frenkel theorem may be considered as a point-wise statement of a more general picture: for any smooth complex curve $C$ there exists a canonically defined ind-affine scheme over $C$, $\Op_{\gogl}(D^*)_C \to C$ (see Definition \ref{def:opersCI}) whose fiber at any point $x \in C$ identifies with $\Op_{\gogl}(D^*_x)$ (here $D^*_x$ is the pointed formal disk attached to $x \in C$). Similarly, it is possible to construct a sheaf of complete topological Lie algebras $\hat{\gog}_{C,\kappa_c}$ on $C$ (see Section 2.3 for its definition; with respect to the notation found there, $S=C$,$X=C^2$, $\Sigma = \{\Delta : C \to C^2\}$, for the notation $S,X,\Sigma$ see Section 2.1)
	whose fiber at any point $x \in C$ coincides 
	with the affine algebra $\hat{\gog}_{D_x,\kappa_{c}}$, and consider $Z_{\kappa_c}(\hat{\gog}_C)$, the center of its sheaf of completed enveloping algebras at the critical level. Part of our main Theorem \ref{thm:teofinalefinale}, which actually follows almost directly from the Feigin-Frenkel theorem (c.f. Proposition \ref{cor:feiginfrenkel1section}), states that there is a canonical isomorphism between $Z_{\kappa_c}(\hat{\gog}_C)$ and $\Fun\left(\Op_{\check{\gog}}(D^*)_C\right)$, the sheaf (on $C$) of functions on $\Op_{\check{\gog}}(D^*)_C$.

	There is actually even more to this story. Indeed, the space $\Op_{\gogl}(D^*)_C$ has a natural \emph{factorization structure} (see \cite[Ch. 3.4]{BDchirali}), which we now explain briefly.

	\subsection*{Factorization}

	The factorization picture concerns spaces or sheaves over finite powers of a smooth curve $C$ and some relationship between them over appropriate sub-varieties. Before explaining this we first take the opportunity to introduce some notation which we will keep for the rest of the work. 

	\begin{ntz}[Notations for finite sets]\label{ssez:notazionifiniteset}
	To avoid possible set theoretical issues we fix the following setting. We will denote by $\ffset$ the category of finite non-empty subsets of $\mN_{\geq 1}$ with surjective maps as morphisms. We will refer to elements $I \in \ffset$ simply as finite sets. The main properties of $\ffset$ that we are interested in are that it is itself a set and that any subset of an object in $\ffset$ is still in $\ffset$. Given a surjective map of finite sets $\pi : J \twoheadrightarrow I$ and a subset $I' \subset I$ we will write $J_{I'}$ for $\pi^{-1}(I')$, when $I' = \{i\}$ we will write simply $J_i$ \index{$J_i$}. Given two surjections $q : K \twoheadrightarrow J, p : J \twoheadrightarrow I$ and an element $i \in I$ we write $q_i: K_i \twoheadrightarrow J_i$ for the associated projection on the fibers.   	
	\end{ntz}

	\smallskip

	\begin{ntz}[Notations for diagonals]\label{sssez:notazionidiagonali}
	We introduce some notations to describe diagonals of products of a scheme $X$ over a base scheme $S$ (so products are to be understood relative to $S$), we also assume that the structural morphism $X \to S$ is separated. We denote by $\Delta:X\lra X^2$ the diagonal map. 
	More generally, given a finite set $I$, we define the small diagonal $\Delta(I):X\lra X^I$, and given $\pi :J\surjmap I$, a surjective map of finite sets, we define the diagonals $\Delta(\pi) 
 = \Delta_{J/I}:X^I\lra X^J$ \index{$\Delta(\pi),\Delta_{J/I}$} in the obvious way, so that, for instance, given surjections $K \twoheadrightarrow J \twoheadrightarrow I$ we have $ \Delta_{K/I} = \Delta_{K/J}\Delta_{J/I}$. If $a,b\in J$ we denote by $\pi_{a,b}$ or by $\pi^J_{a=b}$ a surjection from $J$ to a set with one element less than $J$ such that $\pi_{a=b}(a)=\pi_{a=b}(b)$, \index{$\pi_{a=b}$} and by $\Delta_{a=b}$ \index{$\Delta_{a=b}$} the corresponding diagonal. In many constructions the fact that $\pi_{a=b}$ is not fixed in a unique way will not be a problem, we will specify the map only when necessary.

	By a slight abuse of notation we write $\Delta_{J/I}$ also for the closed subscheme of $X^J$ determined by $\Delta_{J/I}$. Given a finite set $I$ we define the big diagonal $\nabla(I)$ of $X^I$ as the scheme theoretic union of all diagonals $\Delta_{a=b}$ and more generally, given $\pi:J\surjmap I$, a surjective map of finite sets, we set
	$$
	\nabla(J/I)=\bigcup_{\pi(a)\neq \pi(b)}\Delta_{a=b}\subset X^J
	$$
	\index{$\nabla(J/I)$}so that $\nabla(I)=\nabla(I/I)$. We denote by $j_{J/I} : U_{J/I} = X^J \setminus \nabla(J/I) \hookrightarrow X^J$ the associated open immersion. Finally, given two disjoint subsets $A$ and $B$ of $J$ we define 
	$$
	\nabla_{A=B}=\bigcup_{a\in A, b\in B}\Delta_{a=b}
	$$
	and write $j_{A\neq B} : U_{A \neq B} = X^I \setminus \nabla_{A=B} \hookrightarrow X^I$ for the associated open immersion.

	\end{ntz}

\smallskip

Using the above notation, we recall the definition of a factorization algebra given in \cite[Sec. 3.4]{BDchirali}.
\begin{definition}\label{def:factorizationspace}
		A \emph{pseudo-factorization algebra} $\calA$ on a smooth curve $C$ is the datum of a collection of quasi-coherent sheaves $\calA_I \in \Qco(C^I)$ together, for any surjection of finite sets $\pi : J \twoheadrightarrow I$, with morphisms
		\begin{align*}
			\Ran^{\calA}_{J/I} &: \Delta_{J/I}^*\left( \calA_J \right) \rightarrow \calA_I \\
			\fact^{\calA}_{J/I} &: j_{J/I}^*\left( \boxtimes_{i \in I} \calA_{J_i} \right) \rightarrow j_{J/I}^*\left( \calA_J \right)
		\end{align*}
		where $\Delta_{J/I}^*$ and $j_{J/I}^*$ denote the pullback along $\Delta_{J/I} : C^I \to C^J$ and $j_{J/I} : C^J \setminus \nabla(J/I) \to C^J$ respectively. We require these morphisms to be compatible with one another when considering surjections $K \twoheadrightarrow J \twoheadrightarrow I$ as in \cite[3.4.4]{BDchirali}, that is:
		\begin{itemize}
			\item $\Ran^{\calA}_{K/I} = \Ran^{\calA}_{J/I}\circ\left(\Delta_{J/I}^*\Ran^{\calA}_{K/J}\right)$;
			\item $\fact^{\calA}_{K/J} = \left( j^*_{K/J} \fact^{\calA}_{K/I} \right) \circ \left(j_{K/I}^* \boxtimes_{i \in I} \fact^{\calA}_{K_i/J_i} \right)$ ;
			\item $ \left( j_{J/I}^*\Ran^{\calA}_{K/J} \right) \circ \left( \Delta_{K/J}^*\fact^{\calA}_{K/I} \right) = \fact^{\calA}_{J/I} \circ \left(j_{J/I}^*\boxtimes_{i \in I} \Ran^{\calA}_{K_i/J_i}\right) $. 
		\end{itemize}
		We say that the collection $\calA_I$ is a \emph{factorization algebra} if the morphisms $\Ran$ and $\fact$ are isomorphisms.
	\end{definition}

	\begin{remarkno}
		In \cite[3.4]{BDchirali} there are further assumptions regarding torsion along the diagonals for the sheaves $\calA_I$. These guarantee that the forgetful functor from factorization algebras to quasi-coherent sheaves on $C$ is faithful and therefore allow one to talk about \emph{factorization structures} on $\calA = \calA_C \in \Qco(C)$. We won't stress this point of view in this work since the factorization algebras we will consider are (topologically) free. We will use the term \textquote{factorization structure} to denote the datum of the maps $\Ran$ and $\fact$ when we are already given a family of quasi coherent sheaves $\calA_I$ on $C^I$.
	\end{remarkno}

	It is easy to adapt this definition to other cases as well. For instance, one may consider spaces over $C^I$ in place of quasi-coherent sheaves and replace the product $\boxtimes$ with the product of spaces, thus obtaining the notion of a \emph{factorization space}. Our case of interest is the one of completed quasi-coherent sheaves (see \cite[Section \ref{app:top}]{casmaffei1}) with the completed external tensor product $\boxtimes^!$ (see Section \ref{ssec:topologytensorprodsheaves}), we call them \emph{complete topological (pseudo) factorization algebras}.

    \smallskip

	The space $\Op_{\gogl}(D^*)_C$ is actually part of a factorization ind-affine scheme $\Op_{\gogl}(D^*)_{C^I}$, hence the collection of sheaves of functions $\Fun(\Op_{\gogl}(D^*)_{C^I})$ is naturally a  complete topological factorization algebra.

	On the other hand one can construct a factorizable version of the affine algebra $\hat{\gog}_{C,\kappa_c}$ and show that the associated collection $\calU_{\kappa_c}(\hat{\gog}_{C^I})$ of sheaves of completed enveloping algebras is naturally a complete topological factorization algebra. Taking the center of these associative algebras we get a collection of complete topological commutative algebras $Z_{\kappa_c}(\hat{\gog}_{C^I})$ which inherits from $\calU_{\kappa_c}(\hat{\gog}_{C^I})$ the structure of a complete topological pseudo-factorization algebra. The main Theorem of this work is the following.

	\begin{theorem}\label{thm:mainteointro}
		Let $\gog$ be a simple finite dimensional Lie algebra over $\mC$ and $C$ a smooth (non necessarily proper) curve over $\mC$. Consider the collections $Z_{\kappa_c}(\hat{\gog}_{C^I}), \Fun(\Op_{\gogl}(D^*)_{C^I})$ as above. Then for any finite set $I$ there is a canonical isomorphism
		\[
			Z_{\kappa_c}(\hat{\gog}_{C^I}) = \Fun(\Op_{\gogl}(D^*)_{C^I})
		\]
		which is compatible with the pseudo-factorization structures. In particular the collection $Z_{\kappa_c}(\hat{\gog}_{C^I})$ is naturally a complete topological factorization algebra.
	\end{theorem}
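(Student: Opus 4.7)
The strategy is to bootstrap from the case $|I|=1$, which is furnished by Corollary \ref{cor:feiginfrenkel1section} as a family version of the classical Feigin-Frenkel theorem. Both sides carry natural complete topological pseudo-factorization structures, and the plan is to extend the single-variable isomorphism to all $I$ while preserving these structures; the final assertion that $Z_{\kappa_c}(\hat{\gog}_{C^I})$ is a genuine factorization algebra then follows for free from the fact that $\Fun(\Op_{\gogl}(D^*)_{C^I})$ already is.

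I would proceed by induction on $|I|$. Granted the isomorphism $\phi_J$ for all $|J|<|I|$ compatible with the factorization structures, I would first construct $\phi_I$ on the open locus $U_I = C^I \setminus \nabla(I)$. On the Opers side, $\fact^{\Op}$ is an isomorphism by definition, identifying $\Fun(\Op_{\gogl}(D^*)_{C^I})|_{U_I}$ with the completed external product of single-variable factors. Composing with $\phi_1$ on each factor and then with the center-side map $\fact^Z$ (obtained by taking the center of the corresponding $\fact^{\calU}$) defines $\phi_I|_{U_I}$. To extend to the whole $C^I$, I would use the $\Ran$-compatibility: the restriction of $\phi_I$ to a diagonal stratum $\Delta_{I/I'}$ must agree with $\phi_{I'}$ via the $\Ran$ maps, so $\phi_I$ is determined on each stratum by induction, and these definitions glue.

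The main obstacle is showing that the induced map $\fact^Z_{J/I}$ on centers is an isomorphism on $U_{J/I}$, i.e.\ that the center commutes with the completed external product: $Z_{\kappa_c}(\hat{\gog}_{C^J})|_{U_{J/I}} \cong \boxtimes^!_{i\in I}Z_{\kappa_c}(\hat{\gog}_{C^{J_i}})|_{U_{J/I}}$. For general topological associative algebras this is not formal, since the center of a tensor product can be strictly larger than the product of centers. Here however it should follow from the factorization isomorphism $\fact^{\calU}_{J/I}$ for $\calU_{\kappa_c}$ combined with a topological freeness property of the sheaves of centers inherited via the single-variable isomorphism: once we know, from the $|I|=1$ case, that $Z_{\kappa_c}(\hat{\gog}_C)$ is a topologically free sheaf of commutative algebras (being $\Fun(\Op_{\gogl}(D^*)_C)$), the commutation with $\boxtimes^!$ should reduce to general facts on complete tensor products collected in Section \ref{ssec:topologytensorprodsheaves}.

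Once this commutation is established, $\phi_I|_{U_I}$ is an isomorphism by the $|I|=1$ case, the $\Ran$-compatible extension to $C^I$ is canonical by induction, and the resulting $\phi_I$ intertwines both $\Ran$ and $\fact$ on the nose. The final assertion then follows: transporting the genuine $\fact$-isomorphisms from the Opers side via $\phi_I$ shows that $Z_{\kappa_c}(\hat{\gog}_{C^I})$ is itself a complete topological factorization algebra.
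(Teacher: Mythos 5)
There is a genuine gap at the heart of your construction. You define $\phi_I$ only on the open locus $U_I = C^I \setminus \nabla(I)$ and then assert that $\Ran$-compatibility determines $\phi_I$ on each diagonal stratum by induction, and that these definitions \textquote{glue}. But a map on a dense open subscheme together with prescribed restrictions to closed strata does not in general produce a well-defined map on the whole space: the $\Ran$ maps only tell you what the pullback along $\Delta_{J/I}$ of a hypothetical $\phi_I$ should be, not that such an extension of $\phi_I|_{U_I}$ exists or is unique. Making this gluing work would require proving an extension theorem for the completed topological sheaves involved, which is not furnished by the factorization axioms. The paper sidesteps this issue entirely by not constructing the isomorphism via factorization patching at all. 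Instead it produces a \emph{canonical globally-defined} map in two pieces: $\Phi^\zeta_\Sigma : \calU_{\overline{\Sigma}^*}(\zeta^{\kappa_c}_\Sigma(\gog)) \to Z_{\kappa_c}(\hat{\gog}_\Sigma)$ coming from the chiral-enveloping-algebra construction (Corollary \ref{coro:constructionmorphismcenter}), and $\gamma : \calU_{\overline{\Sigma}^*}(\zeta^{\kappa_c}_\Sigma(\gog)) \to \Fun(\Op_{\gogl}(\overline{\Sigma}^*))$ built by a local construction shown to be coordinate-independent (Theorem \ref{thm:feiginfrenkelcasarinmaffei}). The inductive argument then only has to check that a map that \emph{already exists} is an isomorphism; for that, Lemma \ref{lem:isocriteria} says it suffices to check after completed pullback to the closed subscheme $V_{ij}$ (where two sections coincide, so fewer distinct sections, handled by induction) and to the complementary open (where the number of intersections drops, eventually reducing to the disjoint case). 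This is a cleaner logical structure than trying to build the map by stratum-wise gluing.

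Your identification of the center-vs-tensor-product commutation as the key remaining obstacle is on target — this is exactly Lemma \ref{lem:isoprodcentro} — but your argument for it is too optimistic. Knowing that $Z_{\kappa_c}(\hat{\gog}_C)$ is topologically free (from the $|I|=1$ case) does feed into the proof, since the lemma needs $Z(\calB_1)$ locally \QCCF, but the commutation $Z(\calB_1)\otimes^! Z(\calB_2) \to Z(\calB_1\otimes^!\calB_2)$ being an isomorphism is not a purely formal consequence of general facts about completed tensor products; the proof in the paper goes through a careful analysis using a cofree fundamental system of open neighborhoods for one of the factors. You should not wave this off as routine.
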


The approach we take in dealing with this problem follows closely the strategy of \cite{cas2023} and heavily uses the foundational background developed in \cite{casmaffei1}. In particular, we exploit the theory of fields/distributions of \emph{loc. cit.} to construct central elements, which for us builds the bridge between the theory of usual vertex algebras and that of factorization algebras. An alternative proof, based on categorical arguments and on chiral algebras rather than fields/distributions, was pointed out to us by Sam Raskin; however, we could not find a precise reference for this approach.

\subsection*{Description of the Work}

\textbf{Disclaimer:} the present paper is a direct continuation of \cite{casmaffei1}, so we will use freely use the terminology and results exposed there. That includes the rest of this introduction, we proceed to briefly review the content of each section making references to the objects in \cite{casmaffei1}.

\smallskip

We start in Section \ref{sec:recollections} by reviewing some of the material of \cite{casmaffei1}, so we present our geometric setting and the basic objects we will work with for the rest of the paper.

We move forward in Section \ref{sec:envelopingalgchiralalg} by introducing $\Lie_{\overline{\Sigma}^*}(\calV)$ and $\calU_{\overline{\Sigma}^*}(\calV)$, the Lie algebra and the completed enveloping algebra attached to a chiral algebra $\calV$. We show that if $\calV \subset \mF^1_{\Sigma,\calU}$ is a chiral algebra of mutually local fields we have a natural morphism of Lie algebras $\Lie_{\overline{\Sigma}^*}(\calV) \to \calU$, which extends to a continuous morphism of associative algebras $\Phi : \calU_{\overline{\Sigma}^*}(\calV) \to \calU$. In the case where $\calU = \ugsig{\kappa}$ and $\calV = \calV^\kappa_{\Sigma}(\gog)$ we show that $\Phi$ is an isomorphism and deduce some factorization properties of the construction $\calU_{\overline{\Sigma}^*}$ in this case. These factorization properties induce a factorization structure on $\calU_{\overline{\Sigma}^*}(\zeta^{\kappa}_{\Sigma}(\gog))$ and $\Phi$ induces a continuous morphism $\Phi : \calU_{\overline{\Sigma}^*}(\zeta^{\kappa}_{\Sigma}(\gog)) \to Z_{\kappa}(\hat{\gog}_{\Sigma}) = Z(\ugsig{\kappa})$ which by construction is compatible with the (pseudo) factorization structure on both spaces.

\smallskip

Before getting to our main theorems, in section \ref{sec:opersigma} we adapt some classical, known constructions (for which we refer to \cite{casarin2025bundle}) to our geometric setting. in order to give the definition of $\gog$-opers on $\overline{\Sigma}$ and $\overline{\Sigma}^*$. Here we also review the construction of the spaces $\Op_{\gog}(D)_C,\Op_{\gog}(D^*)_C$ and their factorization enhancement $\Op_{\gog}(D)_{C^I}, \Op_{\gog}(D^*)_{C^I}$.

\medskip

Finally, in section \ref{sec:identificationopers2} we prove Theorem \ref{thm:mainteointro}. We actually focus on proving an analogous, slightly stronger statement in our $X,S,\Sigma$ setting (c.f. \ref{thm:teofinale1}) and then show how this specializes to Theorem \ref{thm:mainteointro} when $S = C^J, X = C \times C^J$ and $\Sigma = \Sigma^{\univ}_{C,J} = \{\sigma^{\univ}_{J,j} \}_{j \in J}$ is composed by the canonical sections $\sigma^{\univ}_{J,j}((x_{j'})_{j'\in J}) = (x_j, (x_{j'})_{j'\in J})$ in Section \ref{ssec:thefactorizablefeiginfrenkelcenter}.

The proof of Theorem \ref{thm:teofinale1} is composed by two different parts: showing that the map $\Phi^\zeta_{\Sigma} : \calU_{\overline{\Sigma}^*}\left( \zeta^{\kappa_c}_\Sigma(\gog)\right) \to Z_{\kappa_c}(\hat{\gog}_\Sigma)$ is an isomorphism and establishing a canonical isomorphism $ \gamma_{\Sigma} : \calU_{\overline{\Sigma}^*}\left(\zeta^{\kappa}_{\Sigma}(\gog)\right) = \Fun\left( \Op_{\gogl}(\overline{\Sigma}^*) \right)$. The main strategy of the proof is to use the factorization properties of our constructions to reduce to the case of a single section, which we essentially get for free from the Feigin-Frenkel Theorem. To show the statement about $\gamma$ we define $\gamma$ locally, after the choice of a coordinate, emulating the Feigin-Frenkel isomorphism. We prove that our construction is coordinate independent and that that $\Phi$ and $\gamma$ are isomorphism in the same way: by induction on the number of sections and using factorization properties. 

\smallskip

In Appendix \ref{sec:feiginfrenkelclassical} we review the classical Feigin-Frenkel theorem and make some remarks on its proof. We relate their constructions, as presented in \cite{frenkel2007langlands}, to ours. In this appendix we heavily use the terminology introduced in Section \ref{sec:envelopingalgchiralalg} and some of the terminology of Section \ref{sec:opersigma}; it should be read before getting to Section \ref{sec:identificationopers2}.


\subsection*{Acknowledgements}

Both authors want to thank Alberto De Sole for his support and the encouragement during the preparation of this work. The first author also wants to thank the University of Pisa for the hospitality during his visits in which part of this work was carried out. 

\textbf{Funding:} The first author was funded by national
PRIN Grants 2022S8SSW and 2022HMBTTL and by INFN - CSN4 (Commissione Scientifica Nazionale 4 - Fisica Teorica), MMNLP project.

\medskip

It is our great pleasure to dedicate this paper to George Lusztig and to take this opportunity to thank him for the beautiful mathematics he has created.


\section{Recollections}\label{sec:recollections}
In this paper we will deal with sheaves equipped with a topology on a scheme $S$. We will use the formalism developed in \cite{casmaffei1} to treat chiral algebras and related constructions in this context. 
In this section we introduce some notations and we recall briefly some of the results of \cite{casmaffei1}. In what follows and in the rest of the paper we will freely use the notion of topological sheaf developed in \cite[Section \ref{app:top}]{casmaffei1}, without recalling it here. In \emph{loc. cit.} various topological tensor products are introduced: $\otimes^*,\otimesr,\otimes^!,\boxtimes^{\text{cg}}$; let us mention that there are various technical difficulties in treating these objects and that we deal with all of them in \emph{loc. cit.}. One can read the present paper without paying too much attention to these details, assuming that these construction are all well behaved.

\subsection{The geometric setting}\label{ssec:richiamigeometricsetting}

We will work over a fixed quasi-separated based scheme $S$, which we will assume to be topologically noetherian. For the final application we will also assume $S$ to be integral. We consider a fixed smooth family of curves $p : X \to S$ and $\Sigma = \{ \sigma_i : S \to X \}_{i \in I}$, a finite collection of sections of $p$. Attached to this datum we consider various sheaves of $\calO_S$-modules. We start by considering the locally free of rank $1$ ideal $\calI_\Sigma = \prod \calI_{\sigma_i} \subset \calO_X$, where $\calI_{\sigma_i}$ is the ideal defining the subscheme $\sigma_i(S) \subset X$. We consider the complete topological sheaves on $S$ (see \cite[Section \ref{app:top}]{casmaffei1})

\[
	\pbarO = \varprojlim_n p_*\big( \calO_X / \calI^n_\Sigma\big),\qquad \pbarOpoli = \varprojlim_n p_*\big( \calO_X(\infty\Sigma)/\calI^n_\Sigma \big)
\]
The topology on $\pbarO$ and $\pbarOpoli$ has a fundamental system of open neighborhoods given by $\pbarO(-n) = \ker( \pbarO \to p_*(\calO_X/ \calI_{\Sigma}^n))$.

For a given point $s \in S$ consider the surjection $\pi : J \twoheadrightarrow I$ determined by $\pi(j_1) = \pi(j_2)$ iff $\sigma_{j_1}(s) = \sigma_{j_2}(s)$. By \cite[Section \ref{ssec:descrizionelocale1}]{casmaffei1}, we may find an open neighborhood of $s$ such that for $i_1 \neq i_2 \in I$ the images of sections $\sigma_{j_1}$, for $j_1 \in J_{i_1} = \pi^{-1}(i_1)$ are pairwise disjoint from the images of sections $\sigma_{j_2}$ for $j_2 \in J_{i_2} = \pi^{-1}(i_2)$. It follows that in this case $\pbarOpoli \simeq \prod_{i \in I} \calO_{\overline{\Sigma}^*_{J_i}}$. 

In addition, we can find a function $t = (t_i)_{i \in I} \in \pbarO = \prod_{i \in I} \calO_{\overline{\Sigma}_{J_i}}$ and functions $a_j \in \calO_S$ such that

\begin{align*}
	\calO_{\overline{\Sigma}_{J_i}} &\simeq \varprojlim \frac{\calO_S[t_i]}{\left( \varphi_i^n\right)} \quad \text{where} \quad \varphi_i = \prod_{j\in J_i} (t_i- a_j), \\
	\calO_{\overline{\Sigma}^*_{J_i}} &\simeq \pbarO\left[\varphi_i^{-1}\right].
\end{align*}
We will call such a function $t$ a \emph{coordinate}, an open subset of $S$ which admits a coordinate is called \emph{well covered}. Notice that in the above setting it follows that, for two given indices $j_1, j_2 \in J$, the locus $\{\sigma_{j_1} = \sigma_{j_2}\} \subset S$ is either empty or principal (i.e. determined by $a_{j_1} - a_{j_2}=0$).

As in \cite[Section \ref{ssec:operatoridifferenziali}]{casmaffei1} we will also consider continuous modules of differentials $\Omega^1_{\overline{\Sigma}}$, $\Omega^1_{\overline{\Sigma}^*}$ which are locally isomorphic to $\pbarO \mathrm{d}t$ and $\pbarOpoli \mathrm{d}t$ after the choice of a coordinate $t$; as well as continuous sheaves of differential operators $\calD_{\overline{\Sigma}},\calD_{\overline{\Sigma}^*}$, which analogously are locally isomorphic to $\pbarO[\partial_t],\pbarOpoli[\partial_t]$. Finally, let us recall the residue map of \cite[Section \ref{sec:residuo}]{casmaffei1} $\Res_\Sigma : \Omega^1_{\overline{\Sigma}^*}/\Omega^1_{\overline{\Sigma}} \to \calO_S$ which is obtained as the sum of the usual residue map along all sections. 

All these constructions satisfy several \emph{factorization properties}.

\subsection{Factorization}\label{ssec:richiamifactorization}

Let us introduce here the analogue in our $X,S,\Sigma$ geometric setting of the factorization picture of Notation \ref{sssez:notazionidiagonali} and Definition \ref{def:factorizationspace}. Let $p : X \to S$ be a smooth family of curves as before and $\Sigma = \{ \sigma_i \}_{j \in J}$ a finite set of sections of $p$. For a surjection $\pi : J \twoheadrightarrow I$ we define two subschemes of $S$.

\begin{definition}\label{def:openclosedfactorization}
	Let $\pi : J \twoheadrightarrow I$ be a surjection. We define
	\begin{itemize}
		\item $i_{J/I} : V_\pi = V_{J/I} \to S$ the closed immersion corresponding to the closed subscheme $$V_{J/I} = \{ x \in S : \sigma_{j_1}(x) = \sigma_{j_2}(x) \text{ if } \pi(j_1) = \pi(j_2) \};$$
		\item $j_{J/I} : U_\pi = U_{J/I} \to S$ the open immersion corresponding to the open subscheme $$U_{J/I} = \{ x \in S : \sigma_{j_1}(x) \neq \sigma_{j_2}(x) \text{ if } \pi(j_1) \neq \pi(j_2) \}.$$
	\end{itemize}
\end{definition}

\begin{remark}
	Let $J_i= \pi^{-1}(i) \subset I$, then the following hold by construction:
	\begin{itemize}
		\item For any $j_1,j_2 \in J_i$ the sections $\sigma_{j_1},\sigma_{j_2} : V_{J/I} \to X \times_S V_{J/I}$ coincide. It follows that if we restrict our set of sections $\Sigma$ to $V_{J/I}$ we may identify it with a finite set of sections indexed by $I$. We will denote this new collection of sections by $\Sigma_I$;
		\item On $U_{J/I}$ the closed subschemes $\sigma_{j_1}(U_{J/I}),\sigma_{j_2}(U_{J/I}) \subset X\times_S U_{J/I}$ are disjoint if $\pi(j_1) \neq \pi(j_2)$. We will write $\Sigma_{J_i}$ for the sub-collection of sections indexed by $J_i$ in this case.
	\end{itemize}
\end{remark}

\noindent So when we restrict our attention to $V_{J/I}$ we move to the case where the finite collection of sections is indexed by $I$, while when we restrict our attention to $U_{J/I}$ we get a partition of $J$ where the sections belonging to each partition are disjoint. We are interested in studying how our constructions behave under restriction to $U_{J/I}$ and $V_{J/I}$.

\begin{definition}\label{def:factorizationcompletesheafsigma}
	Let $X,S$ be as above and let $\calA = \{ \calA_{\Sigma}\}$ be a collection of complete topological (often QCC) sheaves of $\calO_S$-modules indexed by families of sections $\Sigma : S \to X^I$ (we let $\Sigma$ and $S$ vary). A \emph{pseudo-factorization algebra structure} on $\calA$\index{pseudo-factorization algebra} is the datum, for any surjection of finite sets $\pi : J \twoheadrightarrow I$, of continuous morphisms of $\calO$-modules
	\begin{align*}
		\Ran^{\calA}_{J/I} &: \hat{i}_{J/I}^*\left( \calA_{\Sigma_J} \right) \rightarrow \calA_{\Sigma_I}, \\
		\fact^{\calA}_{J/I} &: \hat{j}_{J/I}^*\left( \bigotimes^!_{i \in I} \calA_{\Sigma_{J_i}} \right) \rightarrow \hat{j}_{J/I}^*\left( \calA_{\Sigma_J} \right),
	\end{align*}
	where $\hat{i}_{J/I}^*$ and $\hat{j}_{J/I}^*$ denote the completed pullback along $i_{J/I} : V_{J/I} \to S$ and $j_{J/I} : U_{J/I} \to S$. We require these morphisms to be compatible with one another when considering surjections $K \twoheadrightarrow J \twoheadrightarrow I$ in analogy with \cite[3.4.4]{BDchirali}, that is:
	\begin{itemize}
		\item $\Ran^{\calA}_{K/I} = \Ran^{\calA}_{J/I}\circ\left(\hat{i}_{J/I}^*\Ran^{\calA}_{K/J}\right)$;
		\item $\fact^{\calA}_{K/J} = \left( \hat{j}^*_{K/J} \fact^{\calA}_{K/I} \right) \circ \left(\hat{j}_{K/I}^* \bigotimes^!_{i \in I} \fact^{\calA}_{K_i/J_i} \right)$ ;
		\item $ \left( \hat{j}_{J/I}^*\Ran^{\calA}_{K/J} \right) \circ \left( \hat{i}_{K/J}^*\fact^{\calA}_{K/I} \right) = \fact^{\calA}_{J/I} \circ\left(\hat{j}_{J/I}^*\bigotimes^!_{i \in I} \Ran^{\calA}_{K_i/J_i}\right) $. 
	\end{itemize}
	We say that the collection $\calA_{\Sigma}$ is a \emph{completed topological factorization algebra} \index{completed topological factorization algebra}if the morphisms $\Ran$ and $\fact$ are isomorphisms.
\end{definition}

\begin{remark}
	Often the sheaves $\calA_{\Sigma}$ are endowed with more algebraic structures (e.g. they are associative or Lie algebras) and we will consider morphisms $\Ran,\fact$ which preserve the appropriate kind of structure. Let us say that the term \emph{algebra} in \emph{factorization algebra} has nothing to do with this extra structure.

	Moreover, note that the $\otimes^!$ product appearing in the definition may be replaced by any other symmetric monoidal operation on complete topological sheaves, for instance by $\times$ or $\oplus$. We will refer to such structures on a family of sheaves $\calA_\Sigma$ as a (pseudo) factorization algebra structure with respect to $\times,\oplus$.
\end{remark}

\begin{example}
	The collection of sheaves $\pbarO$ is a completed topological factorization algebra with respect to $\times$. In order to prove this, notice that $\hat{i}^*_{J/I}(\pbarO),\hat{j}^*_{J/I}(\pbarO)$ are the sheaves obtained by the same construction of $\pbarO$ for the family of curves $X \times_S V_{J/I} \to V_{J/I}$ and $X \times_S U_{J/I} \to U_{J/I}$ respectively. To show the claim it is then enough to show that
	\begin{enumerate}
		\item Given a surjection $J \twoheadrightarrow I$ such that $\sigma_{j_1} = \sigma_{j_2} : S \to X$ anytime $\pi(j_1) = \pi(j_2)$ then $\calO_{\overline{\Sigma}_J} = \calO_{\overline{\Sigma}_I}$. This is obvious since the sections are the same but with higher multiplicity, and such information disappears when taking the completion;
		\item Given a surjection $J \twoheadrightarrow I$ such that $\sigma_{j_1}(S) \cap \sigma_{j_2}(S) = \emptyset$ anytime $\pi(j_1) \neq \pi(j_2)$ then $\calO_{\overline{\Sigma}_J} = \times_{i \in I }\calO_{\overline{\Sigma}_{J_i}}$. This is easy to check since any quotient $p_*(\calO_X/\calI_{\Sigma_J}^n)$ splits as $\times_{i \in I} p_*(\calO_X/\calI_{\Sigma_{J_i}}^n)$.
	\end{enumerate}
\end{example}

Let us conclude with a simple Remark on how what we just discussed is a generalization of the factorization picture of \ref{def:factorizationspace}.

\begin{remark}\label{rmk:factsigmaugualefactcurva}
	Let $C$ be a smooth curve over $\mC$ and let $I$ be a finite set. Let $S = C^J$ and $X = C \times C^J$, via the canonical projection $p : X \to S$ this is a smooth family of curves. Consider the set of canonical sections $\Sigma^{\univ}_{C,J} = \{ \sigma^{\univ}_{C,J,j'} \}_{j' \in J}$, where
	\[
		\sigma^{\univ}_{C,J,j}(x_{j'})_{j' \in J} = \left(x_{j}, (x_{j'})_{j' \in J}\right).
	\]
	Then it is immediate to check that the closed subscheme $i_{J/I} : V_{J/I} \to S = C^J$ of Definition \ref{def:openclosedfactorization} matches up with the diagonal embedding $\Delta_{J/I} : C^I \to C^J$ appearing in Definition \ref{def:factorizationspace}. In the same manner the open immersion $j_{J/I} : U_{J/I} \to S = C^J$ of Definition \ref{def:openclosedfactorization} is the same as the open immersion $j_{J/I} : U_{J/I} \to C^J$ of Definition \ref{def:factorizationspace}, so that the notations agree. 
	In particular any completed factorization algebra in our $X,S,\Sigma$ (Definition \ref{def:factorizationcompletesheafsigma}) setting produces a completed topological factorization algebra on any smooth curve in the sense of Definition \ref{def:factorizationspace}.
\end{remark}

\subsection{The affine Lie algebra \texorpdfstring{$\gsig{\kappa}$}{in our geometric setting} and the enveloping algebra \texorpdfstring{$\ugsig{\kappa}$}{of the affine algebra}}\label{ssec:richiamiaffinealgebra}

In \cite[Section \ref{sec:thecaseoftheaffinealgebra}]{casmaffei1} out of Lie algebra $\gog$ over $\mC$ and a symmetric invariant bilinear form on it $\kappa$, we constructed a sheaf of Lie algebras which mimics the usual affine algebra attached to $(\gog,\kappa)$:
\begin{align*}
	\gsig{\kappa} &= \gog \otimes \pbarOpoli \oplus \calO_S \mathbf{1}; \\
	[\mathbf{1},\gsig{\kappa}] &= 0, \qquad [X \otimes f, Y \otimes g] = [X,Y]\otimes fg + \kappa(X,Y)\mathbf{1}\Res_\Sigma(gdf). 
\end{align*}
This sheaf, in the case $S = \Spec \mC$ and $X = C$ is a smooth curve, recovers the usual affine algebra of level $\kappa$. The factorization properties of $\pbarOpoli$ and $\Res$ induce a factorization structure on $\gsig{\kappa}$. 

From $\gsig{\kappa}$ we construct its sheaf of completed enveloping algebras $\ugsig{\kappa}$, obtained mimicking the usual construction for the affine algebra: first, we consider the usual sheaf of enveloping algebras and factor out $\mathbf{1} - 1$, where $\mathbf{1}$ is the central element of $\gsig{\kappa}$ while $1$ is the unit of the enveloping algebra of $\gsig{\kappa}$; then we complete this sheaf along the topology generated by the left ideals generated by $\gog \otimes \pbarO(-n)$. The collection of sheaves $\ugsig{\kappa}$ naturally forms a completed topological factorization algebra, this structure comes from the factorization structure of $\gsig{\kappa}$. For a precise construction of these structures we refer to \cite[Section \ref{sec:thecaseoftheaffinealgebra}]{casmaffei1}.

The goal of this paper is to study the center of this sheaf of algebras, $Z_\kappa(\hat{\gog}_{\Sigma}) = Z\left( \ugsig{\kappa}\right)$ at the critical level $\kappa_c = -\frac{1}{2}\kappa_{\gog}$ (here $\kappa_{\gog}$ is the non-normalized Killing form on $\gog$). For the precise definition of $Z_\kappa(\hat{\gog}_\Sigma)$ and its factorization properties we invite the reader to take a look at Section \ref{ssec:factorizationcenter}.

\subsection{Spaces of fields}\label{ssec:richiamifields}

Much of \cite{casmaffei1} is dedicated to study the \emph{spaces of field-distributions} $\mF^1_{\Sigma,\calU}$, which are built up from the sheaf $\pbarOpoli$ and a \QCC associative algebra with topology generated by left ideals $\calU$. The definition is simple: $\mF^1_{\Sigma,\calU} = \calHomcont_{\calO_S}\left( \pbarOpoli, \calU \right)$; the sheaves $\mF^1_{\Sigma,\calU}$ are well behaved and it can be shown that in the affine well covered case $S = \Spec A$ we have $\mF^1_{\Sigma,\calU}(S) = \Homcont_A(\pbarOpoli(S),\calU(S))$. Notice that the sheaf of distributions is naturally a \emph{right} $\pbarDpoli$-module by acting on $\pbarOpoli$ and that the residue morphism induces a canonical map $\mathbf{1} : \pbarOmegapoli \to \mF^1_{\Sigma,\calU}$, $\omega \mapsto ( f \mapsto \Res_\Sigma(f\omega)\cdot 1_{\calU})$, where $1_{\calU}$ is the unit of the associative algebra $\calU$.

Much of the work of \emph{loc. cit.} and of \cite{cas2023} consists in showing that the space of fields-distributions, together with its multivariable version $\mF^n_{\Sigma,\calU} = \calHomcont_{\calO_S}( \pbarOpolim{n},\calU)$ can be used as an analogue of the usual space of fields of the theory of vertex algebras. We recover indeed the usual space of fields in the case where $S = \mC$, $X = C$ is any smooth curve and $\Sigma =\{ x \in X\}$ is any point; in this case we have $\pbarOpoli = \mC((z))$ for any choice of an étale coordinate $z$ around $x \in X$. 

In what follows we review some of these constructions and considerations. In order to do so, let us first recall some notation: let $\calI_\Delta \subset \pbarOpolim{2}$ to be the kernel of the multiplication map $\pbarOpolim{2} = \pbarOpoli \otimes^* \pbarOpoli \to \pbarOpoli$, this is a locally free sheaf of $\pbarOpolim{2}$-modules of rank $1$, which in the case $S$ is well covered with a coordinate $t$, is freely generated by $t\otimes 1 - 1\otimes t$. Starting from this sheaf we can consider $\pbarOpolim{2}(\infty\Delta)$ which can be defined as the colimit of the sheaves $\pbarOpolim{2}(n\Delta) = \calHomcont_{\pbarOpolim{2}}\left( \calI^n_\Delta,\pbarOpolim{2} \right)$, this is naturally a sheaf of algebras which, in the case $S$ is well covered with a coordinate $t$, identifies with the localization $\pbarOpolim{2}[(t \otimes 1 - 1\otimes t)^{-1}]$. A crucial fact is that when considering ordered tensor product (see \cite[Section \ref{app:top}]{casmaffei1}) $\pbarOpoli \otimesr \pbarOpoli$ we get a natural continuous morphism of commutative algebras $\pbarOpolim{2}(\infty\Delta) \to \pbarOpoli\otimesr\pbarOpoli$.

Having these notions at hand one can define the product of two fields $X,Y \in \mF^1_{\Sigma,\calU}$ as the following $2$-field: \[XY : \pbarOpolim{2} \to \calU, \quad f\otimes g \mapsto X(f)Y(g),\] we call the induced morphism $m_r : \mF^1_{\Sigma,\calU} \otimes^* \mF^1_{\Sigma,\calU} \to \mF^2_{\Sigma,\calU}$; the fact that the topology of $\calU$ is generated by left ideals, so that $\calU$ is an $\otimesr$ associative algebra, implies that $m_r$ upgrades to a morphism $m_r : \mF^1_{\Sigma,\calU} \otimesr \mF^1_{\Sigma,\calU} \to \mF^2_{\Sigma,\calU}$. Analogously, we define $m_l : \mF^1_{\Sigma,\calU} \otimesl \mF^1_{\Sigma,\calU} \to \mF^2_{\Sigma,\calU}$ as $m_l(X\otimes Y)(f\otimes g) = Y(g)X(f)$. One moves forward defining what a local $2$-field $Z \in \mF^{2}_{\Sigma,\calU}$ is: that is simply a $2$-field $Z : \pbarOpolim{2} \to \calU$ which vanishes on some power of $\calI_\Delta$. The space of local $2$-fields $\mF^{2\loc}_{\Sigma,\calU}$ identifies with the $\calD$-module pushforward $\Delta_!\mF^1_{\Sigma,\calU}$. Two fields $X,Y$ are then said to be \emph{mutually local} if their bracket $[X,Y] = m_r(X\otimes Y) - m_l(X\otimes Y)$ is a local $2$-field.

The fact that there are morphisms of algebras $\pbarOpolim{2}(\infty\Delta) \to \pbarOpoli \otimesr \pbarOpoli$ implies that there are maps $\mF^1_{\Sigma,\calU} \otimes^* \mF^1_{\Sigma,\calU}(\infty\Delta) = (\mF^1_{\Sigma,\calU} \otimes^* \mF^1_{\Sigma,\calU})\otimes_{\pbarOpolim{2}} \pbarOpolim{2}(\infty\Delta) \to \mF^1_{\Sigma,\calU}\otimesr \mF^1_{\Sigma,\calU}$. In particular we get a morphism
\[
	\mu = m_r - m_l : \mF^1_{\Sigma,\calU} \otimesst \mF^1_{\Sigma,\calU} (\infty\Delta) \to \mF^2_{\Sigma,\calU}
\]
which we call the \emph{chiral bracket}. One of the main results of \cite{casmaffei1} is that the chiral bracket is part of a Lie structure on the collection $\mF^n_{\Sigma,\calU}$, in an appropriate sense. This, after we have carefully set up some operads, directly follows from associativity of $\calU$. We refer to \cite[Section \ref{sec:chiralprod}]{casmaffei1} for the precise formulation of these assertions. 

The crucial implication of the latter fact is that when we are given a subspace $\calV \subset \mF^1_{\Sigma,\calU}$ of mutually local fields, which is a sub $\pbarD$-module of $\mF^1_{\Sigma,\calU}$ and which is closed under the chiral product (that is $\mu \left(\calV \otimes^* \calV (\infty\Delta) \right) \subset \Delta_!\calV$), then the morphism $\mu$ is a Lie bracket in the chiral pseudo-tensor structure of Beilinson and Drinfeld. This leads to the fact that, under the further assumption that $\calV$ contains the image of $\mathbf{1} : \pbarOmega \to \mF^1$, the sheaf $\calV$ is a \emph{chiral algebra} over $\overline{\Sigma}$. The precise definition of a chiral algebra in our topological setting is a bit more subtle since we need to be careful with the topologies. This is a technical detail on which we will expand in the next section.

Another crucial result on the theory of field/distributions is the analogue of \emph{Dong's Lemma} in the theory of vertex algebras (see \cite[Lemma \ref{lem:Dong2}]{casmaffei1}). Very roughly, this states the following two facts:
\begin{itemize}
	\item For any mutually local fields $X,Y \in \mF^1_{\Sigma,\calU}$ and any $f \in \pbarOpolim{2}(\infty\Delta)$ the $2$-field $\mu(X \otimes Y \cdot f)$ is local;
	\item Let $\piuno : \mF^{2\loc}_{\Sigma,\calU} = \Delta_!\mF^1_{\Sigma,\calU} \to \mF^1_{\Sigma,\calU}$ be the residue map of $\calD$-module pushforward and let $X,Y,Z$ be mutually local fields. Then for every $f \in \pbarOpolim{2}(\infty\Delta)$ the $1$-fields $X$ and $\piuno(\mu(Y\otimes Z \cdot f))$ are mutually local. 
\end{itemize} 

This allows us to produce a subspace of mutually local fields which is closed under the chiral product, and hence a chiral algebra, starting from a subspace of mutually local fields. This procedure should be thought of as the analogue of the construction of the vertex algebra generated by some mutually local fields.

\subsubsection{Factorization of fields}\label{ssec:richiamifactfields}

When considering the space of fields $\mF^1_{\Sigma,\calU}$ we may let the complete $\otimesr$-algebra $\calU$ vary with $\Sigma$ as well: an important case for us is considering $\calU = \calU_\kappa(\hat{\gog}_\Sigma)$. It turns out that, in the case where we consider a collection  $\calU_\Sigma$ of complete $\otimesr$-algebras equipped with a factorization structure, the space of fields acquires natural factorization morphisms as well, as explained in \cite[Section \ref{ssec:fattorizzazionecampigenerale}]{casmaffei1}. Let us recall how they are constructed. 

Consider a finite set $J$ with a surjection $J \to I$ and consider the case where $\Sigma$ is indexed by $J$. Consider $X : \pbarOpoli \to \calU_\Sigma$ to be a field in $\mF^1_{\Sigma,\calU_\Sigma}$, by taking the pullbacks along $i_{J/I},j_{J/I}$ (see Definition \ref{def:openclosedfactorization}) and using the factorization properties of $\pbarOpoli$ and $\calU_{\Sigma}$ we get fields
\begin{align*}
	\hat{i}^*_{J/I}X &: \calO_{\oSigma^*_I} \to \hat{i}^*_{J/I}\calU_{\Sigma_k} \xrightarrow{\Ran^{\calU}_{J/I}} \calU_{\Sigma_I},\\
	\hat{j}^*_{J/I}X &: \prod_{i \in I}\hat{j}^*_{J/I}\calO_{\oSigma^*_{J_i}} \to \hat{j}^*_{J/I}\calU_{\Sigma_J} \xrightarrow{(\fact^{\calU}_{J/I})^{-1}} \bigotimes^!_{i \in I} \hat{j}^*_{J/I}\calU_{\Sigma_{J_i}}. 
\end{align*}
 
\noindent The construction $\hat{i}^*_{J/I}$ allows us to construct a morphism
$\Ran^{\mF^1}_{J/I}$ from $\hat{i}^*_{J/I}\mF^1_{\Sigma_J,\calU_{\Sigma_J}}$ to $\mF^1_{\Sigma_I,\calU_{\Sigma_I}}$, which is an isomorphism if $\calU$ is locally topologically free.
In addition, notice that $\hat{j}^*_{J/I}$ coincides with taking the restriction to $U_{J/I}$, so that
\[
	\hat{j}^*_{J/I}\mF^1_{\Sigma_J,\calU_{\Sigma_J}} = \calHom^{\cont}\left(\prod_{i \in I} \hat{j}^*_{J/I}\calO_{\oSigma^*_{J_i}},\bigotimes_{i \in I} \hat{j}^*_{J/I}\calU_{\Sigma_{J_i}}\right)
\]
In particular, given fields $X_i : \hat{j}^*_{J/I}\calO_{\oSigma^*_{J_i}} \to \hat{j}^*_{J/I}\calU_{\Sigma_{J_i}}$ we can construct a field
\[
	\left(\prod_i X_i\right) (f_i)_{i \in I} = \sum_{i \in I} \iota_i(X_i(f_i))  
\]
where $f_i \in \hat{j}^*_{J/I}\calO_{\oSigma^*_{J_i}}$ and $\iota_i$ is the natural map $\hat{j}^*_{J/I}\calU_{\Sigma_{J_i}}\to \otimes^!_{i \in I} \hat{j}^*_{J/I}\calU_{\Sigma_{J_i}}$. This construction yields a morphism
\[
	\fact^{\mF^1}_{J/I} : \prod_{i \in I} \hat{j}^*_{J/I}\mF^1_{\Sigma_{J_i},\calU_{\Sigma_{J_i}}} \to \hat{j}^*_{J/I}\mF^1_{\Sigma_J,\calU_{\Sigma_J}}.
\]
The morphisms $\Ran^{\mF^1},\fact^{\mF^1}$ satisfy properties analogous to those of Definition \ref{def:factorizationcompletesheafsigma}.

Let us mention here that the fields we will be dealing with will have the additional property that $\hat{j}^*_{J/I}X$ lies in the image of $\fact^{\mF^1}_{J/I}$ so the field $X$ factorizes to a collection of fields $X_i : \hat{j}^*_{J/I}\calO_{\oSigma^*_{J_i}} \to \hat{j}^*_{J/I}\calU_{\Sigma_{J_i}}$.

\subsection{Chiral algebras}\label{ssec:richiamichiralalgebras}

In order to deal with some topological issues the definition we give in \cite{casmaffei1} of a chiral algebra differs slightly from the one that Beilinson and Drinfeld give in \cite{BDchirali}. In particular our chiral algebras come equipped with a filtration. Let us restate here the definition of a chiral algebra in our topological setting. In order to state this definition we need to recall some constructions:
\begin{itemize}
	\item Let $\calV$ be a topological sheaf which is a right $\pbarD$-module, then there is a canonical isomorphism of topological right $\pbarDm{2}$-modules \[D(\calV) : \Delta_!\calV \to \frac{\pbarOmega\exttensor{*}\calV(\infty\Delta)}{\pbarOmega\exttensor{*}\calV}.\]
	We invite the reader to look at \cite[Proposition \ref{prop:defD}]{casmaffei1} for the proof.
	\item For a topological sheaf  $\calV$ which is a right $\pbarDm{k}$-module an increasing filtration of $\calV$ is a sequence of $\pbarOm{k}$ submodules $\calV(n) \subset \calV$ such that for every $n,l$ there exists some $m$ such that $\calV(n)\cdot (\pbarDm{k})^{\leq l} \subset \calV(m)$ (see \cite[Section \ref{ssec:operatoridifferenziali}]{casmaffei1} for the definition of $\pbarDm{k}$ and its filtration) and such that $\calV = \varinjlim_n \calV(n)$ as sheaves (we do not require them to be equal as topological sheaves).
	\item Let $\{\calV_{i}\}_{i = 1,\dots, h}$ be a collection of topological sheaves which are $\pbarDm{k_i}$-modules with increasing filtrations $\calV_i(n) \subset \calV_i$, then one defines \[\calV_1 \exttensor{\text{fil}}\calV_2 \exttensor{\text{fil}}\dots\exttensor{\text{fil}}\calV_h := \varinjlim_n \calV_1(n) \exttensor{*}\calV_2(n) \exttensor{*}\dots\exttensor{*}\calV_h(n).\]
	\item The chiral (topological) Beilinson-Drinfeld pseudo tensor structure on filtered $\pbarD$-modules is defined as
	\[
		P^{chBD}_I(\{M_i\}_{i \in I},N) = \Hom^{\text{fil}}_{\pbarDm{I}}\left( (\exttensor{\text{fil}} M_i)(\infty\nabla(I)), \Delta(I)_!N \right).
	\]
	For the notation on $\infty\nabla(I)$ we refer to \cite[Section \ref{ssec:polimolti}]{casmaffei1}, while for the filtrations on the modules $(\exttensor{\text{fil}} M_i)(\infty\nabla(I))$ and $\Delta(I)_!N$ we refer to \cite[Section \ref{ssec:filteredbeilinsondrinfeld}]{casmaffei1}. There it is also shown that that the filtered assumption on the modules is needed in order to define composition in this pseudo-tensor structure.
\end{itemize} 
With these constructions at hand we can define chiral algebras in our topological setting as follows.

\begin{definition}
	A \emph{chiral algebra} on $\overline{\Sigma}$ is the datum of a topological, filtered $\pbarD$-module $\calV$ together with morphisms
	\begin{align*}
		\mu &: \calV \exttensor{\text{fil}}\calV(\infty\Delta) \to \Delta_!\calV, \\
		\mathbf{1} &: \pbarOmega \to \calV,
	\end{align*}
	such that $\mu$ is a Lie bracket for the Beilinson-Drinfeld chiral pseudo-tensor structure, such that the composition $\mu \circ (\mathbf{1}\exttensor{}\id(\infty\Delta))$ vanishes on $\pbarOmega\exttensor{\text{fil}}\calV$, and the induced map on the quotient coincides with the inverse of the morphism $D(\calV)$. 
	The notion of a chiral algebra on $\overline{\Sigma}^*$ is defined analogously.
\end{definition}

\smallskip

\subsubsection{Local description of chiral algebras: vertex algebras over \texorpdfstring{$(\pbarO,t)$}{(Sigma,t)}}\label{ssec:richiamivertexalgoverost}

In the case where $S = \Spec A$ is affine and well covered we may relate the notion of chiral algebras to the notion of a ordinary vertex algebra, in the incarnation of what we call \emph{vertex algebra over} $(\pbarO,t)$ in \cite[Section \ref{ssez:Otalgebre}]{casmaffei1}. 

\noindent A vertex algebra over $(\pbarO(S),t)$ is the datum of a right $\pbarD(S)$-topological module $\mV$ together with an $A$-linear vertex algebra structure $(\mV,T,\vac,Y: \mV \to \End_A(\mV)[[z^{\pm 1}]])$ such that $T = -\partial_t$ and for any $v,w \in \mV$ and $f \in \pbarO(S)$ the following sesquilinearity conditions hold:
\begin{align}\label{eq:sequivertexalgebraost}
	v_{(n)}(wf) &= (v_{(n)}w)f, \\
	(vf)_{(n)}w &= \sum_{k\geq 0} \frac{1}{k!}v_{(n+k)}w\cdot(\partial_t^kf).
\end{align}

\smallskip

When looking at global sections of a chiral algebra in the affine and well covered case, we see that we may write
\[
	\left(\calV\exttensor{\text{fil}}\calV (\infty\Delta)\right)(S) = \calV(S)\exttensor{\text{fil}}\calV(S)[(t\otimes 1-1\otimes t)^{-1}] \qquad (\Delta_!\calV)(S) = \bigoplus_{k\geq 0} \calV(S)\partial^k_{t\otimes 1}.
\]
It follows that the chiral product of two elements $v,w \in\calV(S)$ may be written as
\[
	\mu\left(v\boxtimes w (t\otimes 1-1\otimes t)^n\right) = \sum_{k \geq 0} \frac{1}{k!} v_{(n+k)}w\cdot \partial_{t\otimes 1}^k,
\]
for any $n \in \mZ$ and some $\calO(S)$-linear operations $v_{(m)}w$. It follows by \cite[Lemma \ref{lem:chiralivertice} and Corollary \ref{cor:chiraliOt}]{casmaffei1} that these operations satisfy the axioms of a vertex algebra over $(\pbarO(S),t)$ and that the assignment $\calV \mapsto \calV(S)$ determines an equivalence between filtered \QCC chiral algebras and filtered complete vertex algebras over $(\pbarO(S),t)$.

\smallskip

The above considerations allow us to construct chiral algebras in the local case starting from vertex algebras. Indeed, given any ordinary vertex algebra $V$, the module $V \otimesl \pbarO(S)$ is naturally a vertex algebra over $(\pbarO,t)$, defining its structure of right $\pbarD(S)$-module by $(v\otimes f)\partial_t = -Tv \otimes f - v\otimes\partial_tf$ and $(X\otimes f)_{(n)}(Z \otimes g)$ in order to make the above formulas work.

\subsection{The chiral algebra \texorpdfstring{$\VkSg$}{attached to the affine Lie algebra}}\label{ssec:richiamiVkSg}

By the discussion of \cite[Section \ref{ssez:generatechiral}]{casmaffei1}, given a sub $\calO_S$-module $\calG \subset \mF^1_{\Sigma,\calU}$ consisting of mutually local fields we can consider its  \emph{generated chiral algebra}, which may be constructed by iterating the chiral product and adding the image of the unit $\mathbf{1} : \pbarOmega \to \mF^1$. There are two interesting versions of this generated chiral algebra, which in \emph{loc. cit.} go by the name of $\calV_{\text{basic}+}(\calG)$,$\calV_{\text{com}+}(\calG)$. The chiral algebra $\calV_{\text{basic}+}(\calG)$ is obtained as the colimit of the image of iterations of the chiral product, while $\calV_{\text{com}+}(\calG)$ differ by considering a suitable completion. We are interested in applying this construction to the particular case where $\calU = \ugsig{\kappa}$ and $\calG = \gog \otimes \calO_S$ is the subspace generated by the fields $X : f \mapsto X \otimes f$, for $X \in \gog$. The above difference between $\calV_{\text{basic}+}(\calG)$,$\calV_{\text{com}+}(\calG)$ expained above won't play any significant role in this paper, since, as shown in \emph{loc. cit.}, the two constructions $\calV_{\text{basic}+}(\calG),\calV_{\text{com}+}(\calG)$ agree in this case, so that we get a canonically defined chiral algebra $\VkSg \subset \mF^1_{\Sigma,\ugsig{\kappa}}$.

\medskip

\subsubsection{The map \texorpdfstring{$\calY_{\Sigma,t}$}{Y t}}\label{ssec:recollectionsysigma}

It turns out that, in the case $S = \Spec A$ is affine, well covered, and with a coordinate $t \in \pbarO$, is possible to describe explicitly $\VkSg$. The module of sections $\VkSg(S)$ becomes, in this case and as explained before, a vertex algebra over $(\pbarO(S),t)$. For any $X \in \gog$ there are canonically defined fields $X \in \VkSg(S)$ which are defined as $X(f) = X \otimes f$. When looking at their $n$-products relative to the vertex algebra over $(\pbarO(S),t)$ structure on $\VkSg(S)$ we check in \cite[Proposition \ref{prp:mappaY}]{casmaffei1} (using the results of \cite{cas2023}) that the attached operators $X_{(n)}$ (let us emphasise that these highly depend on the choice of the coordinate $t$) satisfy the usual commutation relations of the affine algebra $\hat{\gog}_{\kappa}$. By the universal property of $V^\kappa(\gog)$ there exists an induced morphism of vertex algebras $V^\kappa(\gog) \to \VkSg(S)$, we denote its $\pbarO$-linear extension by
\[
	\calY_{\Sigma,t} : V^\kappa(\gog) \otimes \pbarO \to \VkSg.
\]
In \cite[Theorem \ref{teo:descrizionelocaleVkSg}]{casmaffei1} it is shown that $\calY_{\Sigma,t}$ is an isomorphism. In addition we show in Corollary \ref{coro:subchiralalg} of \emph{loc. cit.} that the map $\calY_{\Sigma,t}$ restricts to an isomorphism $\zeta^{\kappa}(\gog) \otimes \pbarO \to \zeta^\kappa_\Sigma(\gog)$, where $\zeta^\kappa(\gog)$ is the center of the vertex algebra $V^\kappa(\gog)$ while $\zeta^\kappa_\Sigma(\gog)$ is the center of the chiral algebra $\VkSg$.

The map $\calY_{\Sigma,t}$ will be one of the main players of this paper, so let us expand a little bit on how it is constructed. Notice that by $\pbarO$-linearity we just need to describe its behavior on $V^\kappa(\gog)$ and that we may see elements in $\VkSg \subset \mF^1_{\Sigma,\ugsig{\kappa}}$ as fields, so that given $v \in V^\kappa(\gog)$ we describe each $\calY_{\Sigma,t}(v)$ as a field.

For any $X \in \gog, f \in \pbarOpoli$ we have by construction $\calY_{\Sigma,t}(X_{(-1)}\vac)(f) = X \otimes f \in \ugsig{\kappa}$. The extension of $\calY_{\Sigma,t}$ to $V^\kappa(\gog)$ is defined inductively, using the chiral product on $\VkSg$, indeed, given any two elements $v,w \in V^\kappa(\gog)$ and any $n \in \mZ$, we have
\[
	\calY_{\Sigma,t}(v_{(n)}w)(f) = \mu\left( v \otimes w (1\otimes f)(t\otimes 1-1\otimes t)^n \right).
\]

In particular, in the case where $S = \Spec \mC$ the choice of a coordinate $z \in \pbarO$ induces isomorphisms $\pbarO \simeq \mC[[z]]$, $\pbarOpoli \simeq \mC((z))$. Via these isomorphisms we may identify
\[
	\mF^1_{\Sigma,\ugsig{\kappa}} \simeq \Homcont_{\mC}\left( \mC((z)),U_{\kappa}(\hat{\gog})\right),
\] 
where $U_\kappa(\hat{\gog})$ is the completed enveloping algebra of $\hat{\gog}_\kappa = \gog\otimes\mC((z)) \oplus \mathbf{1}\mC$. 

The map $\calY_z : V^\kappa(\gog) \to \mF^1_{\Sigma,\ugsig{\kappa}}$ coincides, via the above identification, with an enhancement, of the usual state/field correspondence. Indeed, as explained in \cite{cas2023}, post-composing $\calY_z$ with $\Homcont(\mC((z)),U_\kappa(\hat{\gog})) \to \Homcont(\mC((z)),\End V^\kappa(\gog))$ we recover the usual state/field correspondence $Y$ of the structure of a vertex algebra. This follows from the fact that our definition of $n$-products agrees with the usual one after the identification $\Homcont(\mC((z)),\End(V^\kappa(\gog))) = \{ \text{fields on } V^\kappa(\gog) \}$.

\subsubsection{Factorization properties of \texorpdfstring{$\VkSg$}{the chiral algebra}}\label{ssec:richiamifactchiral}

A crucial point in the discussion of factorization properties is Theorem \ref{thm:factorizationofcaly} of \cite{casmaffei1} which shows that the map $\calY_{\Sigma,t}$ intertwines between the factorization structure of $\pbarO$ and the factorization morphisms of $\mF^1_{\Sigma,\gog}$ (see Section \ref{ssec:richiamifactfields}). Let us mention that this result can be also found in \cite{cas2023}[Proposition 7.2.1 and Corollary 7.2.3] and essentially follows from the factorization properties of the fields attached to elements of $\gog$ together with some inductive argument. 

It follows that the fields attached to elements of $V^\kappa(\gog)$ naturally factorize and that the chiral algebra $\VkSg$ has a natural factorization structure (\cite[Proposition \ref{prop:factpropertieschiralalg}]{casmaffei1}), so that we have isomorphisms of chiral algebras
\begin{align*}
	\Ran^{V^\kappa}_{J/I} &: \hat{i}_{J/I}^*\calV^\kappa_{\Sigma_J}(\gog), \to \calV^\kappa_{\Sigma_I}(\gog) \\ \fact^{V^\kappa}_{J/I} &: \hat{j}_{J/I}^*\left( \prod_{i\in I} \calV^\kappa_{\Sigma_{J_i}}(\gog) \right) \to  \hat{j}_{J/I}^*\calV^\kappa_{\Sigma_J}(\gog) 
\end{align*}
which satisfy the condition of Definition \ref{def:factorizationcompletesheafsigma}. These maps are read under the isomorphism $\calY_{\Sigma,t} : V^\kappa(\gog)\otimes\pbarO \to \VkSg$ as the morphisms induced by the factorization structure of $\pbarO$.

\subsubsection{The group \texorpdfstring{$\Autpiu{} O$}{PDFstring} and \texorpdfstring{$\calY_t$}{Y} in the single section case and the action of coordinate changes}\label{ssec:autoeytilde}

In the case where $\Sigma$ consists of a single section, and $S = \Spec A$ is affine and well covered, we restrict our attention to coordinates $t \in \pbarO$ which induce an isomorphism $\rho_t : A[[z]] \xrightarrow{\simeq} \pbarO(S), (z \mapsto t)$. The set of such coordinates forms an $\Autpiu{} O (A) = \{ \tau \in \Aut^{\cont}_A(A[[z]]) : \tau(z) \in (z) \}$-torsor (we refer to section \ref{ssec:recollectionsauto} for the precise definition of the group scheme $\Autpiu{} O$). In particular, for any element $\tau \in \Autpiu{} O(A)$ there is a well defined element $\tau(t) \in \pbarO$ which is a coordinate.

In \cite[Section \ref{ssec:autoactions}]{casmaffei1} we studied how the map $\calY_{\Sigma,t}$ behaves when changing the coordinate. In particular the map \[\tilde{\calY}_{\Sigma,t} : V^\kappa(\gog) \otimes A[[z]] \xrightarrow{\id \otimes \rho_t} V^\kappa(\gog)\otimes\pbarO(S) \to \VkSg(S)\]
is taken in analysis and the composition $\calY_{\tau} = \tilde{\calY}_{\Sigma,t}^{-1} \circ \tilde{\calY}_{\Sigma,\tau(t)}$ is studied. It is shown that the assignment $\tau \mapsto \calY_{\tau}$ determines a group homomorphism $\Autpiu{} O(A) \to \Aut_A\left( V^\kappa(\gog) \otimes A[[z]] \right)$ which is independent from the choice of a coordinate.

After the choice of a specified coordinate $z$ we have $$\mF^1 = \Hom_A\left( A((z)),\calU_{\kappa}(\hat{\gog}_{A((z))})\right)$$ and an immersion $\calY : V^\kappa \otimes A[[z]] \to \Hom_A\left( A((z)),\calU_{\kappa}(\hat{\gog}_{A})\right)$, where $\calU_\kappa(\hat{\gog}_{A})$ is the complete enveloping algebra of $\hat{\gog}_{A,\kappa} =\gog \otimes A((z)) \oplus A\mathbf{1}$. If we consider on the rightmost space the action of $\Autpiu{} O (A)$ by conjugation it is shown in \cite[Lemma \ref{lem:groupactions}]{casmaffei1} that the space on the left is left invariant and that the restriction of this action for some $\tau \in \Autpiu{} O(A)$ coincides with $\calY_\tau$.

\section{Lie and enveloping algebras of chiral algebras}\label{sec:envelopingalgchiralalg}

In this section we are going to construct a sheaf of associative complete topological $\otimesr$-algebras on $S$ starting from a chiral algebra $\calV$ on $\pbarO$. We are interested in the case $\calV =\VkSg$ but we will provide a construction in general. This discussion may be interpreted as a  `sheafification' along the base scheme $S$ of the construction of Section 4.2 of \cite{cas2023}, it parallels \cite[Sect. 3.6]{BDchirali} in which we allow poles on our $\Sigma$ instead of allowing poles around a $\mC$-point $x \in X$.

\subsection{The Lie algebra $\Lie_{\overline{\Sigma}^*}(\calV)$}\label{ssec:LieV}
Let $\big(\calV, \mu: \calV \exttensor{\text{fil}} \calV \to \Delta_! \calV, u:\pbarOmega\lra \calV\big)$ be a chiral algebra and let $\calV(n)$ be its filtration. We first construct a sheaf of Lie algebras attached to $\calV$. The construction will be an $\calO_S$-linear version of \cite[2.5.2]{BDchirali} and it will require some steps.

Consider the sheaf 
	\[
	\calVstar \stackrel{\text{def}}{=} \calV \otimes_{\pbarO} \pbarOpoli
	\]
	This sheaf is naturally a chiral algebra over $\pbarOpoli$ extending the chiral bracket $\pbarOpoli$-linearly. The filtration on this chiral algebra is given by $\calVstar(n)=\calV(n)\otimes _ \pbarO \pbarO(n)$. 
	The modules $\calVstar(n)$ and $\calV(n)$ are locally isomorphic, on the former, we put the resulting topology and we consider on $\calVstar$ the colimit topology
	(see \cite[Section \ref{ssec:localization}]{casmaffei1} for the details on this topological version of the localization). In the case of the chiral algebra $\calV=\VkSg$  we denote these objects by $\VkSgstar$ and $\VkSgstar(n)$. 
	
	In what follows we are going to assume that $\calV=\limind \calV(n)$ in the category of topological sheaves. We are going to make some constructions that can be given also without this hypothesis, but let us notice that these would produce the same structures for $\calV$ and $\limind \calV(n)$, so there is no loss of generality in assuming $\calV=\limind \calV(n)$.

\subsubsection{The Lie algebra $h^0(\calVstar)$.}

    Recall that the De Rham cohomology of the right $\pbarDpoli$ module $\calVstar$ is defined as 
	\[
	h^0(\calV_{\overline{\Sigma}^*}) = \frac{\calVstar}{\calVstar \cdot T_{\overline{\Sigma}^*}}
	\] 
	and notice that 
	$$
	h^0(\Delta_!\calVstar)=\frac{\calVstar}{\calVstar \cdot T^2_{\overline{\Sigma}^*}}\isocan
	\Delta_*(h^0(\calVstar)).
	$$
It follows that $h^0(\mu)$ induces a structure of $\calO_S$-Lie algebra $[\cdot,\cdot]:h^0(\calVstar)\otimes h^0(\calVstar)\lra h^0(\calVstar)$. This is an $\calO_S$-linear version of \cite[2.5.2]{BDchirali}.

The sheaf $h^0(\calVstar)$ has a natural filtration constructed as follows
$$
	h^0(\calV_{\overline{\Sigma}^*})(n) = \frac{\calVstar(n)}{\big(\calVstar \cdot T_{\overline{\Sigma}^*}\big)\cap \big(\calVstar(n)\big)}.
$$
Using the fact that $S$ is noetherian and the assumption $\calV=\limind \calV(n)$ it is possible to verify that $h^0(\calVstar)=\limind h^0(\calVstar)(n)$ as a sheaf. We put on $h^0(\calVstar)$ the colimit topology. With this topology we have that the restriction of the Lie bracket $[\cdot,\cdot]$ to $h^0(\calVstar)(n)\times h^0(\calVstar)(n)$ is continuous and, hence, 
the Lie bracket $[\cdot,\cdot]$ on $h^0(\calVstar)\times h^0(\calVstar)$ is continuous in each variable. 

\begin{remark}
	The chiral algebra unit morphism $\uch : \Omega^1_{\overline{\Sigma}} \to \calV$ induces a morphism $h^0(\uch) : h^0(\pbarOmegapoli) \to h^0(\calV_{\overline{\Sigma}^*})$. It follows from the chiral unit axioms that the image of $u$ is central in $h^0(\calV_{\overline{\Sigma}^*})$. Indeed, $\mu\circ(\uch,I)=D(\calVstar)$ the canonical morphism from $\pbarOmegapoli\boxtimes \calVstar(\infty \Delta)$ to $\Delta_!\calVstar$ which vanishes on $\pbarOmegapoli\boxtimes \calVstar$; hence for all sections $\omega$ of $\pbarOmegapoli$ and $v$ of $\calVstar$ we have
	$[u(\omega),v]= D(\calVstar)(v)=0$.
\end{remark}

\subsubsection{Construction of \texorpdfstring{$\Lie_{\oSigma^*}(\calV)$}{the Lie algebra of Fourier coefficients}}
The following definition is an adaptation to our geometric setting of the Lie algebra of Fourier coefficients of a vertex algebra (denoted by $U(V)$ in \cite{frenkel2007langlands}) and the Lie algebra $\Lie_{K_n}(V)$ of \cite{cas2023}.

\begin{definition}\label{def:liesigmastar}
	Let $\calV$ be a chiral algebra over $\pbarO$. We define
	\[
	\Lie_{\overline{\Sigma}^*}(\calV) \stackrel{\text{def}}{=} h^0(\calV_{\overline{\Sigma}^*}) \coprod_{h^0(\Omega^1_{\overline{\Sigma}^*})} \calO_S
	\]
	where\index{$\Lie_{\overline{\Sigma}^*}(\calV)$} the coproduct is taken over the maps $h^0(\uch) : h^0(\Omega^1_{\overline{\Sigma}^*}) \to h^0(\calV_{\overline{\Sigma}^*})$ and $\Res_{\Sigma} : h^0(\Omega^1_{\overline{\Sigma}^*}) \to \calO_S$. This is a sheaf of $\calO_S$-Lie algebras. We denote by $\ulie : \calO_S \to \Lie_{\overline{\Sigma}^*}(\calV)$ the induced natural map, which has central image.
	
	This construction is functorial in the chiral algebra $\calV$, so that given a morphism of chiral algebras $\calV \to \calV'$ we get a morphism of $\calO_S$-Lie algebras $\Lie_{\overline{\Sigma}^*}(\calV) \to \Lie_{\overline{\Sigma}^*}(\calV')$.
\end{definition}

This Lie algebra comes with a natural filtration: we define
$\Lie_{\oSigma^*}(\calV)(n)$ as the image of $h^0(\calV_{\oSigma^*})(n) \oplus \calO_S$ in $\Lie_{\oSigma^*}(\calV)$. We have that $\Lie_{\oSigma^*}(\calV)$ is the colimit of $\Lie_{\oSigma^*}(\calV)(n)$ as a sheaf. 

Each $\Lie_{\oSigma^*}(\calV)(n)$ is the quotient of $h^0(\calV_{\oSigma^*})(n) \oplus \calO_S$, we put on 
$\Lie_{\oSigma^*}(\calV)(n)$ the quotient topology and on 
$\Lie_{\oSigma^*}(\calV)$ we put the colimit topology.
For this topology the bracket is continuous in each variable and, moreover, for all $n$ there exists $m$ such that the bracket of  $\Lie_{\oSigma^*}(\calV)(n)$ with itself is contained in $\Lie_{\oSigma^*}(\calV)(m)$ and the map
$\Lie_{\oSigma^*}(\calV)(n)\times \Lie_{\oSigma^*}(\calV)(n)\lra \Lie_{\oSigma^*}(\calV)(m)$ is continuous. 

We define $\oLie_{\oSigma^*}(\calV)(n)$ as the completion of $\Lie_{\oSigma^*}(\calV)(n)$ and $\oLie_{\oSigma^*}(\calV)$ as the colimit of the system $\oLie_{\oSigma^*}(\calV)(n)$. The the bracket is well defined as a morphism
$\oLie_{\oSigma^*}(\calV)\,\exttensor{\fil}\,\oLie _{\oSigma^*}(\calV) \to \oLie _{\oSigma^*}(\calV)$.

\subsubsection{$\Lie_{\oSigma^*}(\calV)$ in the case of fields}\label{rmk:lievkfields}
The following remarks deal with the construction $\Lie_{\oSigma^*}$ in the special case of chiral algebras realized as a space of fields.

Let $\calV$ be a chiral algebra such that $\calV(n) \subset \mF^1_{\Sigma,\calU}$ consists of mutually local fields. By that we mean that the chiral product and the chiral unit $\Omega^1_{\overline{\Sigma}} \to \calV$ are induced by those of $\mF^1_{\Sigma,\calU}$. This case applies for instance when $\calV$ is generated by a subsheaf $\calG \subset \mF^1_{\Sigma,\calU}$ of mutually local fields so that we may take $\calV$ to be any of the chiral algebras $\vbasicpiu,\vcompiu$ defined in \cite[Section \ref{ssez:generatechiral}]{casmaffei1}. In this case we may identify the Lie product of the classes of $X,Y \in \calV_{\overline{\Sigma}^*}$ inside $\Lie_{\overline{\Sigma}^*}(\calV)$ with the class of the field
		\[
		\piuno(\mu(X\boxtimes Y)).
		\]
		Here we view $\mu(X\boxtimes Y) \in \Delta_! \calV \subset \mF^{2\loc}_{\Sigma,\calU}$, this coincides the bracket of the fields $X$ and $Y$, while $\piuno : \mF^2_{\Sigma,\calU} \to \mF^1_{\Sigma,\calU}$ is the restriction along the immersion $\pbarOpoli \to \pbarOpoliq$ which sends $f \mapsto 1 \otimes f$. This follows from \cite[Lemma \ref{lem:campilocali}]{casmaffei1}.

	\begin{lemma}\label{rmk:actionlievku} Assume $\calV$ satisfies the hypotheses above, and let $\calU$ be an associative complete topological $\otimesr$-algebra over $\calO_S$.  We may then consider the inclusion $\calV_{\overline{\Sigma}^*} \subset \mF^1_{\Sigma,\calU}$ and the associated morphism
		\[
		\ev_1 : \calV_{\overline{\Sigma}^*} \to \calU, \qquad X \mapsto X(1).
		\]
		This induces a continuous morphism of Lie algebras
		\(
		\beta: \Lie_{\overline{\Sigma}^*}\calV \to \calU. \qedhere
		\)
	\end{lemma}
	\begin{proof}
		The map defined above clearly vanishes on $\calV \cdot \Tan_{\overline{\Sigma}^*}$, in addition by assumption that the chiral unit is the same one of $\mF^1_{\Sigma,\calU}$ the corresponding morphism $h^0(\Omega^1_{\overline{\Sigma}}) \to \calU$ naturally factors as the composition $$h^0(\Omega^1_{\overline{\Sigma}}) \xrightarrow{\Res_\Sigma} \calO_S \xrightarrow{1_{\calU}} \calU.$$ It follows that $X \mapsto X(1)$ gives a well defined morphism $\Lie_{\overline{\Sigma}^*}\calV \to \calU$. Given two fields $X, Y \in \calV_{\overline{\Sigma}^*} \subset \mF^1_{\Sigma,\calU}$ we may compute the Lie bracket of their classes $\overline{X},\overline{Y} \in \Lie_{\overline{\Sigma}^*}(\calV)$ as in the discussion at the beginning of Section \ref{rmk:lievkfields} so that
		\[
		[\beta(\overline{X}),\beta(\overline{Y})] = [X(1),Y(1)] = [X,Y](1\otimes 1) = \beta(\overline{\piuno([X,Y])}) = \beta([\overline{X},\overline{Y}])
		\]
		Finally the morphism is continuous, becouse the evaluation $X\mapsto X(1)$ from $\mF^1_{\Sigma,\calU}$ is continuous, hence the restriction of $\grb$ to $\calV(n)$ is continuous for all $n$. 
	\end{proof}
	In the particular the case of the chiral algebra $\VkSgstar$ this construction is related with the construction of the affine Lie algebra as follows.
	\begin{lemma}\label{rmk:gktolievk}
		There exists a canonical continuous morphism of topological $\calO_S$-Lie algebras
		\[
		\alpha : \hat{\gog}_{\Sigma,\kappa} \to \Lie_{\overline{\Sigma}^*}(\VkSg)
		\]
		defined on $\hat{\gog}_{\Sigma,\kappa} = \gog\otimes\pbarOpoli \oplus \calO_S\mathbf{1}$ by sending
		\begin{align*}
		\gog\otimes\pbarOpoli &\to \VkSg\otimes_{\pbarO}\pbarOpoli \to \Lie_{\overline{\Sigma}^*}(\VkSg) \\
		\calO_S\mathbf{1} &\xrightarrow{u}  \Lie_{\overline{\Sigma}^*}(\VkSg),
		\end{align*}
		where the first map comes from the natural inclusion $\gog \subset \VkSg$.
	\end{lemma}
	\begin{proof}
		The map is already defined, linear and continuous, so the only condition to check is that it is a morphism of Lie algebras. Let $X,Y \in \gog$ and $\hat{X},\hat{Y}$ the associated fields, $f,g \in \pbarOpoli$. We want to compute the bracket $[\hat{X}f,\hat{Y}g] \in \Lie_{\overline{\Sigma}^*}(\VkSg)$. Computing the bracket of fields, for any $a,b \in \pbarOpoli$, we have
		\[
		[\hat{X}f,\hat{Y}g](a\otimes b) = [X\otimes fa,Y \otimes gb] = [X,Y]\otimes (fgab) + \kappa(X,Y)\Res_{\Sigma}\left( gb\cdot d(fa)\right)
		\]
		as elements of $\ugsig{\kappa}$. By the discussion of Section \ref{rmk:lievkfields} we can identify $[\hat{X}f,\hat{Y}g] \in \Lie_{\overline{\Sigma}^*}(\VkSg)$ with the class of the field
		\[
		b \mapsto [X,Y]\otimes fgb + \kappa(X,Y)\Res_{\Sigma}(b\cdot gdf).
		\]
		This is exactly the field 
		\(
		\widehat{[X,Y]} fg + \kappa(X,Y)\Res_{\Sigma}(\_\, gdf),
		\)
		which matches up with the image under $\alpha$ of 
		\[
		[X,Y] fg + \kappa(X,Y)\Res_{\Sigma}(gdf)\mathbf{1}.\qedhere
		\]
	\end{proof}

\subsection{Enveloping algebras}\label{ssec:envelopingalgchiralalg}

The goal of this section is to define a complete topological sheaf of $\otimesr$-algebras $\calU_{\overline{\Sigma}^*}(\calV)$ attached to a chiral algebra $\calV$ over $\pbarO$. This will be a modification of the enveloping algebra of $\Lie_{\overline{\Sigma}^*}(\calV)$ in the same spirit of the constructions $\widetilde{U}(V)$ of \cite{frenkel2007langlands} and $\mathbb{U}_{K_n}(V)$ of \cite{cas2023} (for $V$ a vertex algebra).

\begin{definition}\label{def:envelopingchiral1}
	Let $\calU(\Lie_{\overline{\Sigma}^*}(\calV))$ be the sheaf of enveloping algebras of $\Lie_{\overline{\Sigma}^*}(\calV)$. We define
	\[
	\calU^0_{\overline{\Sigma}^*}(\calV) \stackrel{\text{def}}{=} \frac{\calU(\Lie_{\overline{\Sigma}^*}(\calV))}{(\ulie - \unoU)}
	\]
	where $\ulie : \calO_S \to \Lie_{\overline{\Sigma}^*}(\calV) \to \calU(\Lie_{\overline{\Sigma}^*}(\calV))$ is the map defined in Definition \ref{def:liesigmastar} and $\unoU : \calO_S \to \calU(\Lie_{\overline{\Sigma}^*}(\calV))$ is the unit of the $\calO_S$-algebra structure.
	
	The topology on $\Lie_{\overline{\Sigma}^*}(\calV)$ induces a natural topology on $\calU^0_{\overline{\Sigma}^*}(\calV)$, where a \fsonoz is given by the left ideals generated by a \fsonoz of $\Lie_{\overline{\Sigma}^*}(\calV)$. Since the bracket on $\Lie_{\oSigma^*}(\calV)$ is continuous in each variable, the product on $\calU^0_{\oSigma^*}(\calV)$ is $\tensor{\ra}$-continuous with respect to this topology. 
	We define $\calU^1_{\overline{\Sigma}^*}(\calV)$
	to be the completion of $\calU^0_{\overline{\Sigma}^*}(\calV)$ along this topology. 
\end{definition}

The algebra $\calU^1_{\oSigma^*}(\calV)$ has the following universal property. Let $\calU$ be a  complete topological associative $\tensor{\ra}$-algebra and let $\grf:\Lie_{\oSigma^*}(\calV)\lra \calU$ be a continuous morphism of Lie algebras for which the composition $\varphi\circ \mathbf{u} : \calO_S \to \calU$ coincides with the unit morphism of $\calU$. Then $\grf$ extends uniquely to a continuous morphism of associative algebras from $\calU^1_{\oSigma^*}(\calV)$ to $\calU$.

\begin{remark}\label{rmk:UdaOLie}
We can give the same construction using the Lie algebra 
$\oLie_{\oSigma^*}$. It has an analogue universal properties and it follows that the  associative algebra constructed in this way is canonically isomorphic to the one constructed using $\Lie_{\oSigma^*}$
\end{remark}

To construct the desired enveloping algebra $\calU_{\overline{\Sigma}^*}(\calV)$ we must factor out some relations coming from the chiral algebra structure on $\calV$.
Let us note that there is a canonical morphism of $\pbarOpoli$-modules
\begin{align*}
	\Psi &: \calV_{\overline{\Sigma}^*} \to \calHomcont_{\calO_S}\left( \pbarOpoli, \calU^1_{\overline{\Sigma}^*}(\calV) \right) = \mF^1_{\Sigma,\calU^1_{\overline{\Sigma}^*}(\calV)}, \\ \Psi(v)(f) &= \overline{v \otimes f} \in \Lie_{\overline{\Sigma}^*}(\calV) \to \calU^1_{\overline{\Sigma}^*}(\calV).
\end{align*}

Informally, let's say that we want to impose minimal relations to $\calU^1_{\overline{\Sigma}^*}(\calV)$ so that $\Psi$ becomes a morphism of chiral algebras.

\begin{lemma}\label{lem:psilocalfields}
	The image of $\Psi$ consists of mutually local fields.
\end{lemma}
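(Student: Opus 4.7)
The plan is to show that the chiral product of two fields in the image of $\Psi$ lands in $\Delta_!^{\leq N}\mF^1_{\Sigma,\calU^1_{\overline{\Sigma}^*}(\calV)}$ for some $N$, which, as in the paragraph following the definition of $\hat{\gog}_{\Sigma,\kappa}$, is precisely the mutual locality condition. The bridge is that commutators in $\calU^1_{\overline{\Sigma}^*}(\calV)$ of elements coming from $\Lie_{\overline{\Sigma}^*}(\calV)$ are computed by the chiral product $\mu_\calV$ of $\calV$, and the defining axiom of a chiral algebra bounds the pole order of that product on the diagonal.

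First I would fix local sections $X, Y \in \calV_{\overline{\Sigma}^*}$ and, for arbitrary $f, g \in \pbarOpoli$, compute the commutator $[\Psi(X)(f), \Psi(Y)(g)]$ inside $\calU^1_{\overline{\Sigma}^*}(\calV)$. By Definition \ref{def:envelopingchiral1}, $\calU^1_{\overline{\Sigma}^*}(\calV)$ is the completion of the enveloping algebra of $\Lie_{\overline{\Sigma}^*}(\calV)$ modulo the unit identification, so this commutator equals the image of the Lie bracket $[Xf, Yg]$ taken in $\Lie_{\overline{\Sigma}^*}(\calV)$. By Definition \ref{def:liesigmastar}, that Lie bracket is nothing else than $h^0(\mu_\calV(Xf \boxtimes Yg))$, where $\mu_\calV$ denotes the chiral product of $\calV$ extended $\pbarOpoli$-linearly to $\calV_{\overline{\Sigma}^*}$, corrected by the residue coming from the unit map $h^0(\Omega^1_{\overline{\Sigma}^*}) \to \calO_S$.

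Next I would invoke the defining property of a chiral algebra: $\mu_\calV$ factors through $\Delta_!\calV_{\overline{\Sigma}^*}$, meaning that on each filtered piece $\calV(n)$ the product $\mu_\calV(\calV(n) \boxtimes \calV(n))$ lies in $\Delta_!^{\leq N_n}\calV$ for some integer $N_n$. Hence after multiplication by a sufficiently high power of the diagonal equation $(t\otimes 1 - 1\otimes t)^{N_n}$ the commutator $[\Psi(X)(f), \Psi(Y)(g)]$ becomes a finite sum of evaluations of fields coming from $\Psi(\calV_{\overline{\Sigma}^*})$. Translating back via Remark \ref{rmk:lievkfields}, this says precisely that $\mu(\Psi(X) \boxtimes \Psi(Y))$, viewed as an element of $\mF^2_{\Sigma,\calU^1_{\overline{\Sigma}^*}(\calV)}$, belongs to $\Delta_!^{\leq N_n}\mF^1_{\Sigma,\calU^1_{\overline{\Sigma}^*}(\calV)}$, which is the desired mutual locality.

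The main obstacle is the identification, in the preceding step, of the commutator in the associative algebra $\calU^1_{\overline{\Sigma}^*}(\calV)$ with the image, under $\Psi$ followed by $h^0$, of the chiral product $\mu_\calV$: this is morally the content of Remark \ref{rmk:lievkfields}, but one must extend the observation there from elements of $\calV_{\overline{\Sigma}^*}$ to their $\pbarOpoli$-multiples $Xf, Yg$, and verify that the pole-order bound is preserved under the $\calO_S$-linear completion used to build $\calU^1_{\overline{\Sigma}^*}(\calV)$. Once this compatibility is checked, the factorization of $\mu_\calV$ through $\Delta_!$ yields the claim, and since $X, Y$ were arbitrary local sections of $\calV_{\overline{\Sigma}^*}$ the image of $\Psi$ consists entirely of mutually local fields.
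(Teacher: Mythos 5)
Your proof is correct and takes essentially the same route as the paper: the central identity is that the commutator $[\Psi(X),\Psi(Y)]$ of the fields is computed by $h^0$ of the chiral product $\mu_{X,Y}\colon f\mapsto\mu(fX\boxtimes Y)$, and mutual locality then follows because this product takes values in $\Delta_!\calV_{\overline{\Sigma}^*}$, hence is killed by a power of the diagonal ideal. One small remark: the bracket on $\Lie_{\overline{\Sigma}^*}(\calV)$ is just $h^0(\mu)$ without a residue correction (the residue enters only in the pushout identifying the central copy of $h^0(\Omega^1_{\overline{\Sigma}^*})$ with $\calO_S$), and the compatibility under $\pbarOpoli$-multiples that you flag is immediate from the $\pbarOpoli$-linearity built into $\Psi$ and into the extended chiral product on $\calV_{\overline{\Sigma}^*}$.
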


\begin{proof}
	Fix sections $X,Y \in \calV_{\overline{\Sigma}^*}$, we want to show that $\Psi(X),\Psi(Y)$ are mutually local. Consider the induced map $\mu_{X,Y} : \pbarOpoliq \to \Delta_!\calV_{\overline{\Sigma}^*}$ given by $f\otimes g \mapsto \mu( Xf\boxtimes Yg)$. Since the domain is supported on the diagonal there exists some power of the diagonal ideal $\calJ_\Delta \subset \pbarOpoliq$ such that $\mu_{X,Y}(\calJ^n_\Delta) = 0$. We claim that with the same $n$ we have $[\Psi(X),\Psi(Y)](\calJ_\Delta^n) = 0$. This follows from the fact that unravelling the definitions we have
	\[
	[\Psi(X),\Psi(Y)] = h^0 \circ \piuno \circ \mu_{X,Y}
	\]
	indeed, for any two functions $f,g \in \pbarOpoli$ we have
	\begin{align*}
	[\Psi(X),\Psi(Y)](f\otimes g) &= [\Psi(X)(f),\Psi(Y)(g)] \in \Lie_{\overline{\Sigma}^*}(\calV) \to \calU^1_{\overline{\Sigma}^*}(\calV)\\
	&= [\overline{X\otimes f},\overline{Y\otimes g}] = \overline{\piuno\left( \mu(Xf \boxtimes Yg) \right)} = h^0\left(\piuno(\mu_{X,Y}(f\otimes g))\right).
	\end{align*}
	The third equality follows from the discussion at the beginning of Section \ref{rmk:lievkfields}.
\end{proof}

We would like to consider now $\calW \subset \mF^1_{\Sigma,\calU^1_{\overline{\Sigma}^*}(\calV)}$ to be the chiral algebra over $\pbarOpoli$ generated by $\Psi(\calV_{\overline{\Sigma}^*})$, which by the previous Lemma consists of mutually local fields. In order to properly define $\calW$ and apply the construction of \cite[Section \ref{ssez:generatechiral}]{casmaffei1} we need to impose further assumptions on $\calV$, which we will assume from now on. We assume that $\calV$ is any of the chiral algebras $\calV_{\text{basic}+},\calV_{\text{com}+}$ generated by a subsheaf $\calG \subset \mF^1_{\Sigma,\calU}$ which satisfies the assumptions of loc. cit.. It follows from the proof of Lemma \ref{lem:psilocalfields} that $\Psi(\calG)$ satisfies the same assumptions of $\calG$. We may then define $\calW$ to be the $\pbarOpoli$-linear extension of the chiral algebra $\calW_{\text{com}}(\Psi(\calG)) \subset \mF^1_{\Sigma,\calU^1_{\overline{\Sigma}^*}(\calV)}$. In this way we get that the morphism $\Psi : \calV_{\overline{\Sigma}^*} \to \mF^1_{\Sigma,\calU^1_{\overline{\Sigma}^*}(\calV)}$ factors through $\calW$. 

\smallskip

Consider the following (non commutative!) diagram
\begin{equation}\label{eq:diagramcoeq}
\begin{tikzcd}{\calV_{\overline{\Sigma}^*}\exttensor{\fil}\calV_{\overline{\Sigma}^*}(\infty\Delta)} && {\calW\exttensor{\fil}\calW(\infty\Delta)} \\
\\
{\Delta_!\calV_{\overline{\Sigma}^*}} && {\Delta_!\calW} \\
&&& {\calU^1_{\overline{\Sigma}^*}(\calV)}
\arrow["\Psi\exttensor{\fil}\Psi", from=1-1, to=1-3]
\arrow["{\mu_{\calV_{\overline{\Sigma}^*}}}"', from=1-1, to=3-1]
\arrow["{\mu_{\mF}}", from=1-3, to=3-3]
\arrow["{\Delta_!\Psi}", from=3-1, to=3-3]
\arrow["{\text{ev}_{1}}", from=3-3, to=4-4]
\end{tikzcd}
\end{equation}
where $\mu_\mF$ is the product in the space of fields and $\text{ev}_1$ is obtained by evaluating at $1$ after embedding $$\Delta_!\calW \subset \calHomcont_S\left(\pbarOpoliq,\calU^1_{\overline{\Sigma}^*}(\calV)\right).$$

\begin{lemma}\label{rmk:coeqifmorphofchiral}
	Let $\calU'$ be another complete associative $\otimesr$-algebra, equipped with a continuous morphism of algebras $q : \calU^1_{\overline{\Sigma}^*}(\calV) \to \calU'$. Then the map
	\[
	\Psi_q : \calV_{\overline{\Sigma}^*} \to \calHomcont_S\left(\pbarOpoli,\calU^1_{\overline{\Sigma}^*}(\calV)\right) \xrightarrow{q_*} \calHomcont_S\left(\pbarOpoli,\calU'\right)
	\]
	is a morphism of chiral algebras between $\calV_{\overline{\Sigma}^*}$ and the $\pbarOpoli$-linearization of the chiral algebra generated by the image of $\Psi_q(\calG)$ if and only if $q$ coequalizes diagram \eqref{eq:diagramcoeq}. Here $q_*$ is just post composition with $q$. Recall that this makes sense since the image of $\Psi$ consists of mutually local fields and so the same goes for $\Psi_q$. 
\end{lemma}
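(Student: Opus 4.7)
The plan is to reduce the coequalizer condition to the hypothesis by exploiting functoriality of the field-theoretic chiral product in the target algebra. Denote by $\calW'$ the $\pbarOpoli$-linear chiral algebra in $\mF^1_{\Sigma,\calU'}$ generated by $\Psi_q(\calG)$ via the procedure of Section \ref{ssez:generatechiral} (the assumptions there are inherited from those on $\Psi(\calG)$ by continuity of $q$). Postcomposition with $q$ induces a continuous $\calO_S$-linear map $q_* : \mF^1_{\Sigma,\calU^1_{\overline{\Sigma}^*}(\calV)} \to \mF^1_{\Sigma,\calU'}$ which by construction satisfies $q_* \circ \Psi = \Psi_q$, and in particular sends $\Psi(\calG)$ onto $\Psi_q(\calG)$.

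First I would verify that $q_*$ restricts to a morphism of chiral algebras $\calW \to \calW'$. The content here is that the chiral product on mutually local fields constructed in Section \ref{ssez:generatechiral} is built from the associative product of the target algebra and is therefore functorial in it: for any continuous algebra morphism $\calU \to \calU''$ between algebras whose topology is generated by left ideals, the induced map on $\mF^1$ intertwines the chiral products on the chiral algebras generated by matching sets of fields. Applied to $q$, this gives $q_* \circ \mu_{\calW} = \mu_{\calW'} \circ (q_* \boxtimes q_*)$ on $\calW\boxtimes \calW(\infty\Delta)$. The second, trivial ingredient is the identity $q \circ \text{ev}_1 = \text{ev}_1 \circ q_*$, which follows from the fact that $\text{ev}_1$ is pointwise evaluation of a field at $1 \in \pbarOpoli$.

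With these two facts in hand, the coequalizer condition is obtained by the chain
\begin{align*}
q \circ \text{ev}_1 \circ \mu_{\calW} \circ (\Psi\boxtimes\Psi) &= \text{ev}_1 \circ q_* \circ \mu_{\calW} \circ (\Psi\boxtimes\Psi) \\
&= \text{ev}_1 \circ \mu_{\calW'} \circ (\Psi_q \boxtimes \Psi_q) \\
&= \text{ev}_1 \circ \Delta_!\Psi_q \circ \mu_{\calV_{\overline{\Sigma}^*}} \\
&= q \circ \text{ev}_1 \circ \Delta_!\Psi \circ \mu_{\calV_{\overline{\Sigma}^*}},
\end{align*}
where the penultimate equality is the hypothesis that $\Psi_q$ is a morphism of chiral algebras into $\calW'$, and the last equality again combines $q \circ \text{ev}_1 = \text{ev}_1 \circ q_*$ with $q_* \circ \Psi = \Psi_q$ (applied after $\Delta_!$). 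The main (modest) obstacle is the functoriality claim for $q_*$ in the first step: given the explicit inductive construction of the chiral bracket of generated mutually local fields in terms of multiplication in the target algebra, it is automatic from $q$ being a continuous algebra morphism, but it requires unwinding that definition.
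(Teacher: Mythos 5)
Your proof is correct and follows essentially the same route as the paper's: you define the target chiral algebra $\calW_q$ generated by $\Psi_q(\calG)$, establish that post-composition by $q$ induces a morphism of chiral algebras $\calW \to \calW_q$ (the functoriality you correctly flag as the only step requiring unwinding), note the compatibility $q \circ \text{ev}_1 = \text{ev}_1 \circ \Delta_! q_*$, and concatenate. The paper encodes precisely this reasoning as the commutativity of two diagrams extending \eqref{eq:diagramcoeq}, whereas you write it out as a chain of equalities; the content is identical.
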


\begin{proof}
	Denote by $\calW_q$ be the $\pbarOpoli$-linearization of the chiral algebra $(\calW_q)_{\text{com}}$ generated by the image of $\Psi_q(\calG)$ we have a commutative diagram
	\[\begin{tikzcd}
	{\calW\exttensor{\fil}\calW(\infty\Delta)} && {\calW_q\exttensor{\fil}\calW_q(\infty\Delta)} \\
	\\
	{\Delta_!\calW} && {\Delta_!\calW_q}
	\arrow["{q_*\exttensor{} q_*}", from=1-1, to=1-3]
	\arrow["{\mu_{\mF}}"', from=1-1, to=3-1]
	\arrow["{\mu_{\mF}}", from=1-3, to=3-3]
	\arrow["{\Delta_! q_*}"', from=3-1, to=3-3]
	\end{tikzcd}\]
	We extend diagram \eqref{eq:diagramcoeq} as follows:
	\[\begin{tikzcd}
	{\calV_{\overline{\Sigma}^*}\exttensor{\fil}\calV_{\overline{\Sigma}^*}(\infty\Delta)} && {\calW\exttensor{\fil}\calW(\infty\Delta)} && {\calW_q\exttensor{\fil}\calW_q(\infty\Delta)} \\
	\\
	{\Delta_!\calV_{\overline{\Sigma}^*}} && {\Delta_!\calW} && {\Delta_!\calW_q} \\
	\\
	&& {\calU^1_{\overline{\Sigma}^*}(\calV)} && {\calU'}
	\arrow["\Psi\exttensor{\fil}\Psi", from=1-1, to=1-3]
	\arrow["{\mu_{\calV_{\overline{\Sigma}^*}}}"', from=1-1, to=3-1]
	\arrow["{q_*\exttensor{\fil} q_*}", from=1-3, to=1-5]
	\arrow["{\mu_{\mF}}"', from=1-3, to=3-3]
	\arrow["{\mu_{\mF}}", from=1-5, to=3-5]
	\arrow["{\Delta_!\Psi}"', from=3-1, to=3-3]
	\arrow["{\Delta_! q_*}"', from=3-3, to=3-5]
	\arrow["{\text{ev}_1}"', from=3-3, to=5-3]
	\arrow["{\text{ev}_1}"', from=3-5, to=5-5]
	\arrow["q"', from=5-3, to=5-5]
	\end{tikzcd}\]
	Notice the rightmost squares are commutative. It immediately follows that if $\Psi_q =   q_*\circ\Psi$ is a morphism of chiral algebras then $q$ coequalizes diagram \eqref{eq:diagramcoeq}. 
	
	On the other hand assume that $q$ coequalizes diagram \eqref{eq:diagramcoeq}, consider an element $ A \in \calVstar\exttensor{\text{fil}}\calVstar (\infty\Delta)$ and let $$Z= \left(\Delta_!\Psi_q\circ \mu_{\calVstar} - \mu_{\mF}\circ (\Psi_q\exttensor{\text{fil}}\Psi_q)\right)(A) \in \Delta_!\calW_q \subset \mF^{2\loc}_{\Sigma,\calU'}.$$ 
	Our claim that $\Psi_q$ is a morphism of chiral algebras is equivalent to show that $Z=0$ for an arbitrary choice of $A$. Since the top squares are $\pbarOpoliq$-linear we have
	\[
	Z(f\otimes g) = \ev_1( f\otimes g \cdot Z) = \ev_1 \circ \left( \Delta_!\Psi_q\circ \mu_{\calVstar} - \mu_{\mF}\circ (\Psi_q\exttensor{\text{fil}} \Psi_q)\right) (f \otimes g \cdot A) = 0.
	\]
	It follows that $Z = 0$ and therefore the composition of the top squares is commutative.
\end{proof}

\begin{definition}\label{def:envelopingchiral2}
	We define $\calU_{\overline{\Sigma}^*}(\calV)$,\index{$\calU_{\overline{\Sigma}^*}(\calV)$} the complete enveloping algebra of $\calV$, to be the complete associative $\otimesr$-algebra which coequalizes the above diagram, so that we are equipped with a map $q : \calU^1_{\overline{\Sigma}^*} \to \calU_{\overline{\Sigma}^*}$ which is initial along morphisms of $\otimesr$ algebras $q' : \calU^1_{\overline{\Sigma}^*} \to \calU'$ such that
	\[
	q'\circ \text{ev}_1 \circ \left( \Delta_!\Psi \circ \mu_{\calV_{\overline{\Sigma}^*}} - \mu_{\mF}\circ\Psi\exttensor{\fil}\Psi \right) = 0.
	\]
	It may be constructed by dividing by the two sided ideal generated by the image of
	\begin{equation}\label{eq:defUV}
	\text{ev}_1 \circ \left( \Delta_!\Psi \circ \mu_{\calV_{\overline{\Sigma}^*}} - \mu_{\mF}\circ\Psi\exttensor{}\Psi \right) : \calV_{\overline{\Sigma}^*}\exttensor{\fil}\calV_{\overline{\Sigma}^*}(\infty\Delta) \to \calU^1_{\overline{\Sigma}^*}(\calV)
	\end{equation}
	and then taking the completion along the quotient topology. The assignment $\calV \mapsto \calU_{\overline{\Sigma}^*}(\calV)$ is functorial.
\end{definition}

\begin{remark}\label{rmk:UquozienteU0}
	If $\calG$ is a subsheaf of $\calF$ then the completion of $\calF/\calG$ is isomorphic to the completion of $\calF/\overline{\calG}$. Hence, in the construction of $\calU_{\oSigma^*}(\calV)$ we 
	can replace the images of $\calV_{\oSigma^*}\exttensor{\fil} \calV_{\oSigma^*}(\infty \Delta)$ along the map of formula \eqref{eq:defUV} by the union of the images of $\calV_{\oSigma^*}(n)\otimes \calV_{\oSigma^*}(n)(n\Delta)$. These images factors through $\calU^1_{\oSigma^*}(\calV)$, hence we can construct 
	$\calU_{\oSigma^*}(\calV)$ also as the completion of the quotient of 
	$\calU^1_{ \oSigma^*}(\calV)$.
\end{remark}

\begin{remark}By Lemma \ref{rmk:coeqifmorphofchiral}
	the composition
	\[
	\calV_{\overline{\Sigma}^*} \xrightarrow{\Psi} \calHomcont_S\left( \pbarOpoli, \calU^1_{\overline{\Sigma}^*}(\calV)\right) \xrightarrow{q\circ\_} \calHomcont_S\left( \pbarOpoli, \calU_{\overline{\Sigma}^*}(\calV)\right)
	\]
	has image consisting of mutually local fields and is compatible with the chiral product on both sides.
\end{remark}

\subsection{Description of \texorpdfstring{$\calU_{\oSigma^*}(\VkSg)$}{the enveloping algebra of the chiral algebra}}

Having the definition at hand we move on to describe the complete enveloping algebra $\calU_{\oSigma^*}(\VkSg)$. 

\begin{lemma}\label{lem:generationenvchiral}
	Assume that $\calG \subset \calV$ is a subsheaf which generates $\calV$ as a chiral algebra, by that we mean that $\calV$ may be obtained from $\calG$ by iterating the chiral product $\piuno\circ\mu$, taking derivations and finally taking the closure. Then, given any complete topological $\otimesr$ algebra $\calU'$, any two continuous morphisms of associative algebras $q,q' : \calU_{\overline{\Sigma}^*}(\calV) \to \calU'$,  which coincide on the image of $\calG \otimes \pbarOpoli \to \Lie_{\overline{\Sigma}^*}(\calV) \to \calU_{\overline{\Sigma}^*}(\calV)$ are actually equal.
\end{lemma}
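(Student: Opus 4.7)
The plan is to reduce the lemma to the statement that the image of $h^0(\calG) \to \calU_{\overline{\Sigma}^*}(\calV)$ topologically generates $\calU_{\overline{\Sigma}^*}(\calV)$ as an $\calO_S$-linear $\otimesr$-algebra. Once this is known, two continuous $\otimesr$-algebra morphisms into a complete algebra with left-ideal topology that agree on a topologically generating subsheaf must coincide, and the lemma follows.

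First I would observe that by construction $\calU^0_{\overline{\Sigma}^*}(\calV) = \calU(\Lie_{\overline{\Sigma}^*}(\calV))/(u - \unoU)$ is algebraically generated by the image of $\Lie_{\overline{\Sigma}^*}(\calV)$, and hence $\calU_{\overline{\Sigma}^*}(\calV)$, being obtained from $\calU^0_{\overline{\Sigma}^*}(\calV)$ by a quotient and a completion, is topologically generated as a $\otimesr$-algebra by this image. Since $u(\calO_S) \subset \Lie_{\overline{\Sigma}^*}(\calV)$ maps into $\calO_S \cdot \unoU$, it is enough to show that the image of $h^0(\calV_{\overline{\Sigma}^*}) \to \calU_{\overline{\Sigma}^*}(\calV)$ lies in the closed $\otimesr$-subalgebra $\calA$ generated by the image of $h^0(\calG)$.

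For this, I would exploit the hypothesis that $\calV$ is generated by $\calG$ as a chiral algebra via the recipe of Section \ref{ssez:generatechiral}: iteratively applying $\piuno \circ \mu$ and acting by derivations. Passing to $\calV_{\overline{\Sigma}^*}$ and then to $h^0$ kills the $\Tan_{\overline{\Sigma}^*}$-action, so only the iterated chiral products need to be tracked, and I would proceed by induction on the number of iterations. The base case is tautological. For the inductive step, the coequalizer relation from Definition \ref{def:envelopingchiral2} identifies, in $\calU_{\overline{\Sigma}^*}(\calV)$, the image of $\piuno\mu_{\calV_{\overline{\Sigma}^*}}(f(X\boxtimes Y))$ with the value at $1$ of $\piuno\mu_{\calW}(f(\Psi(X) \boxtimes \Psi(Y)))$. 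Since $\calW$ is the chiral algebra of fields generated by $\Psi(\calG)$ inside $\mF^1_{\Sigma, \calU_{\overline{\Sigma}^*}(\calV)}$, and its chiral bracket is built from the algebra multiplication of $\calU_{\overline{\Sigma}^*}(\calV)$ (Remark \ref{rmk:lievkfields}), this image lies in the closed $\otimesr$-subalgebra generated by the images of $X$ and $Y$, which sits in $\calA$ by the inductive hypothesis.

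The main technical obstacle I expect is the systematic bookkeeping of topologies: one must verify that iterated chiral products of sections of $\calG_{\overline{\Sigma}^*}$ are dense in $h^0(\calV_{\overline{\Sigma}^*})$ in the precise topology pushed forward to $\calU_{\overline{\Sigma}^*}(\calV)$, and that the Laurent-type expansion of field multiplication in $\mF^1_{\Sigma, \calU_{\overline{\Sigma}^*}(\calV)}$ along the diagonal genuinely yields only elements of the closed subalgebra generated by its two factors. Both points should reduce to the constructions of Section \ref{ssez:generatechiral} together with Remark \ref{rmk:lievkfields}, but this is where the continuity and closedness arguments need to be spelled out carefully.
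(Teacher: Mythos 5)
Your proposal takes essentially the same approach as the paper: both proofs induct on the basic filtration $\calV(n)$ generated by $\calG$, use that $h^0$ kills the $\Tan_{\overline{\Sigma}^*}$-translates, and invoke the coequalizer relation of Definition~\ref{def:envelopingchiral2} at the inductive step. The only cosmetic difference is that the paper compares $q$ and $q'$ directly on the images of $h^0(\calV(n))$, whereas you first isolate the claim that $h^0(\calG)$ topologically generates $\calU_{\overline{\Sigma}^*}(\calV)$; these are equivalent routes, and the continuity/closedness bookkeeping you flag is equally implicit in the paper's version.
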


\begin{proof}
	Notice that by construction of $\calU_{\overline{\Sigma}^*}$, it follows that if $q,q'$ coincide on $\Lie_{\overline{\Sigma}^*}(\calV)$ then they are equal. Therefore it suffices to prove that if $q,q'$ coincide on $\calG \to \calU_{\overline{\Sigma}^*}(\calV)$, they coincide on $\Lie_{\overline{\Sigma}^*}(\calV)$. In order to do so consider the filtration of $\calV_{\oSigma^*}$ inductively defined by $\tilde{\calV}_{\oSigma^*}(1) = \calG \otimes_{\calO_S}\pbarOpoli$ and $$\tilde{\calV}_{\oSigma^*}(n+1) = \piuno\mu\left( \tilde{\calV}_{\oSigma^*}(n)\boxtimes\tilde{\calV}_{\oSigma^*}(n)(\Delta) \right) + \tilde{\calV}_{\oSigma^*}(n)\Tan_{\overline{\Sigma}^*} + \tilde{\calV}_{\oSigma^*}(n)$$ We prove by induction on $n$ that the morphisms $q,q'$ coincide on $\tilde{\calV}_{\oSigma^*}(n) \to \calU_{\overline{\Sigma}^*}(\calV)$. This proves the Lemma since the union of the images of $\tilde{\calV}_{\oSigma^*}(n)$ is dense in $\calV_{\oSigma^*}$ by assumption and by that we get that $q,q'$ coincide when restricted to $\calV_{\oSigma^*} \to \calU_{\overline{\Sigma}^*}(\calV)$. The case $n = 1$ is tautological; so assume by induction that that $q,q'$ coincide on $h^0(\tilde{\calV}_{\oSigma^*}(n))$. The fact that $q,q'$ coincide on $h^0(\tilde{\calV}_{\oSigma^*}(n+1))$ follows by the fact that $\calU^1_{\overline{\Sigma}^*}(\calV) \to \calU_{\overline{\Sigma}^*}(\calV) \to \calU'$ (where $\calU_{\overline{\Sigma}^*}(\calV) \to \calU'$ may be $q$ or $q'$) coequalizes diagram \eqref{eq:diagramcoeq} and by the fact that $h^0(\tilde{\calV}_{\oSigma^*}(n)\Tan_{\overline{\Sigma}^*}) = 0$.
\end{proof}


\begin{definition}\label{rmk:alphabetaisoalgebre}
	We construct canonical continuous morphisms of associative algebras
	\[
	U(\alpha): \ugsig{\kappa} \to \calU_{\overline{\Sigma}^*}(\VkSg) \qquad U(\beta): \calU_{\overline{\Sigma}^*}(\VkSg) \to \ugsig{\kappa}
	\]
	\begin{itemize}
		\item $U(\alpha)$ is constructed starting from the morphism of Lie algebras $\alpha$ of Lemma \ref{rmk:gktolievk} which extends to the completed enveloping algebras since it is continuous and the topologies on both algebras are both constructed taking left ideals along the topology of the Lie algebras;
		\item $U(\beta)$ is constructed analogously from the morphism of Lie algebras $$\beta : \Lie_{\overline{\Sigma}^*}(\VkSg) \to \ugsig{\kappa}$$ of Lemma \ref{rmk:actionlievku}. To upgrade $\beta$ to $\calU_{\overline{\Sigma}^*}(\VkSg)$ we proceed step by step and refer to Definitions \ref{def:envelopingchiral1},\ref{def:envelopingchiral2}.
		\begin{itemize}
            \item By the universal property of $\calU^1_{\oSigma^*}$ the morphism $\beta$ extends to a morphism $U^{1}(\beta): \calU^1_{\overline{\Sigma}^*}(\VkSg) \to \ugsig{\kappa}$;
			\item Finally, we need to check that $U^1(\beta)$ coequalizes diagram \eqref{eq:diagramcoeq}. In order to do so, we use Lemma \ref{rmk:coeqifmorphofchiral} and notice that the composition
			\[
			\VkSg_{\overline{\Sigma}^*} \xrightarrow{\Psi} \calHomcont_S\left(\pbarOpoli,\calU^1_{\overline{\Sigma}^*}(\VkSg) \right)  \xrightarrow{U^1(\beta)_*} \calHomcont_S\left(\pbarOpoli, \ugsig{\kappa} \right)
			\]
			is the natural morphism $\VkSg_{\overline{\Sigma}^*} \to \mF^1_{\Sigma,\gog}$ induced by the inclusion $\VkSg \subset \mF^1_{\Sigma,\gog}$, so that it is a morphism of chiral algebras between $\VkSg_{\overline{\Sigma}^*}$ and its image. Indeed the composition above restricts to
			\[
			\VkSg_{\overline{\Sigma}^*} \xrightarrow{\Psi} \calHomcont_S\left(\pbarOpoli,\Lie_{\overline{\Sigma}^*}(\VkSg)\right)  \xrightarrow{\beta_*} \calHomcont_S\left(\pbarOpoli, \ugsig{\kappa} \right)
			\]
			which, if we denote by $\overline{X} \in \Lie_{\overline{\Sigma}^*}$ the class of an element $X\in \VkSg_{\overline{\Sigma}^*}$, reads as
			\[
			X \mapsto \left( f \mapsto \overline{fX}\right) \mapsto \left( f \mapsto \overline{fX}(1) = X(f) \right).
			\]
		\end{itemize}
	\end{itemize}
\end{definition}

\begin{proposition}\label{prop:descrenvelopingalg}
	$U(\alpha)$ and $U(\beta)$ are mutually inverse and establish a canonical isomorphism
	\[
	\Phi_{\Sigma} =U(\beta) : \calU_{\overline{\Sigma}^*}(\VkSg) \to \calU_\kappa(\hat{\gog}_\Sigma).
	\]\index{$\Phi_{\Sigma}$}
\end{proposition}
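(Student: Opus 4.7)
The plan is to show that the continuous algebra maps $U(\alpha)$ and $U(\beta)$ constructed in Remark \ref{rmk:alphabetaisoalgebre} are mutually inverse to each other; the isomorphism $\Phi_\Sigma = U(\beta)$ then follows immediately. Since both maps are continuous morphisms of complete topological $\otimesr$-algebras whose topologies are generated by left ideals coming from the underlying Lie algebras, it is enough to check the two compositions on suitable (topological) generators: then continuity and multiplicativity will propagate the identities to the whole algebras.

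For the composition $U(\beta) \circ U(\alpha) : \ugsig{\kappa} \to \ugsig{\kappa}$, I would use that $\ugsig{\kappa}$ is topologically generated, as a continuous $\otimesr$-algebra, by the image of $\hat{\gog}_{\Sigma,\kappa} = \gog\otimes\pbarOpoli \oplus \calO_S\mathbf{1}$ together with the unit. On a section $X\otimes f \in \gog\otimes\pbarOpoli$, $\alpha$ produces the class $\overline{X\otimes f} \in \Lie_{\overline{\Sigma}^*}(\VkSg)$ of the corresponding field, and $\beta$ sends this class back to the value of the field at $1\in\pbarOpoli$, which by the definition of the field attached to $X$ is precisely $X\otimes f \in \ugsig{\kappa}$. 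On the central element $\mathbf{1}$, the composition reads $\mathbf{1}\mapsto u(\mathbf{1})\mapsto 1$, which coincides with $\mathbf{1}$ in $\ugsig{\kappa}$ via the defining relation $\mathbf{1} = 1$ imposed in the construction of $\calU^1(\gsig{\kappa})$.

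For the composition $U(\alpha) \circ U(\beta) : \calU_{\overline{\Sigma}^*}(\VkSg) \to \calU_{\overline{\Sigma}^*}(\VkSg)$, I would invoke Lemma \ref{lem:generationenvchiral} applied to the generating subsheaf $\calG_\gog$ of $\VkSg$ (recall $\VkSg = \vbasicpiu(\calG_\gog)$): the Lemma reduces the check of $U(\alpha)\circ U(\beta) = \id$ to a verification on the image of $h^0(\calG_\gog)$ inside $\calU_{\overline{\Sigma}^*}(\VkSg)$. For a section $X \in \gog$ viewed as a field in $\calG_\gog$, we have $U(\beta)(\overline X) = X(1) = X\otimes 1 \in \ugsig{\kappa}$, and $U(\alpha)(X\otimes 1)$ is by definition the class in $\Lie_{\overline{\Sigma}^*}(\VkSg)$ of the field $X$, i.e.\ the class $\overline X$ we started with.

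The main subtlety will be verifying that the hypotheses of Lemma \ref{lem:generationenvchiral} apply cleanly, namely that $\VkSg$ is generated as a chiral algebra by $\calG_\gog$ (which is immediate from $\VkSg = \vbasicpiu(\calG_\gog)$) and that the compositions under consideration are indeed continuous algebra morphisms landing where they should; both points are in fact built into the step-by-step construction of $U(\beta)$ outlined in Remark \ref{rmk:alphabetaisoalgebre} and into the continuity of $\alpha$ from Remark \ref{rmk:gktolievk}. Once these bookkeeping items are in place, the verification on generators is direct and the identities extend to the completions, concluding that $\Phi_\Sigma$ is a (continuous) isomorphism of $\otimesr$-algebras.
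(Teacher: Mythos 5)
Your proposal is correct and follows essentially the same route as the paper's proof: both compositions are checked on topological generators, using the inclusion $\hat{\gog}_{\Sigma,\kappa}\hookrightarrow\ugsig{\kappa}$ on one side and Lemma \ref{lem:generationenvchiral} (applied to the generating subsheaf $\calG_\gog$) on the other, after which the two composites are recognized as the identity by unwinding the definitions of $\alpha,\beta$ on the corresponding fields. The only difference is that you have spelled out the generator computation in more detail than the paper, which simply asserts that "it follows from the constructions."
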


\begin{proof}
	It follows from the constructions that both composites are identities on subsheaves of generators, namely the subsheaves $\hat{\gog}_{\kappa,\Sigma} \to \ugsig{\kappa}$ and $\hat{\gog}_{\kappa,\Sigma} \to \calU_{\overline{\Sigma}^*}(\VkSg)$. The latter generates $\calU_{\overline{\Sigma}^*}(\VkSg)$ thanks to Lemma \ref{lem:generationenvchiral}.
\end{proof}

\subsection{Factorization Properties}
We want to construct factorization structures for $\Lie_{\overline{\Sigma}^*}$ and $\calU_{\overline{\Sigma}^*}$. For the precise definition of our factorization setting, we refer to Definitions \ref{def:openclosedfactorization} and \ref{def:factorizationcompletesheafsigma}. We analyze first the behaviour under pull backs and direct sums. 

\subsubsection{Direct sums}\label{ssec:LieUprodotti} In this section we analyse the behaviour of our constructions with respect to direct sum. Let $(\calV',\mu',u')$ and $(\calV'',\mu'',u'')$ be two chiral algebras. Define $\calV=\calV'\oplus \calV''$ with filtration given by the direct sum of the filtrations, $u=(u',u'')$ and the chiral product $\mu$ restricts to $\mu'$ and $\mu''$ respectively on $\calV'\exttensor{\mathrm{fil}}\calV'(\infty \Delta)$ and $\calV''\exttensor{\mathrm{fil}}\calV''(\infty \Delta)$ and is zero on $\calV'\exttensor{\mathrm{fil}}\calV''(\infty \Delta)$
and $\calV''\exttensor{\mathrm{fil}}\calV'(\infty \Delta)$ . In this setting we have
$\calV_{\oSigma^*}\simeq \calV_{\oSigma^*}'\oplus\calV_{\oSigma^*}''$, hence
 a natural isomorphism
$$
\Liepoli (\calV'\oplus \calV'')\simeq \frac{\Liepoli(\calV')\oplus \Liepoli(\calV'')}{(\mathbf{u}'=\mathbf{u}'')}.
$$
It follows immediately that 
$$
\calU^0_{\bar \Sigma ^*}(\calV'\oplus \calV'')\simeq \calU^0_{\bar \Sigma ^*}(\calV')
\otimes  \calU^0_{\bar \Sigma ^*}(\calV'').
$$
The topology on the left hand side is defined by the left ideals generated by the images of noz of $\Liepoli(\calV')$ and $\Liepoli(\calV'')$, Hence a fsnoz of the left hand side are of the form $U'\otimes \calU^0_{\bar \Sigma ^*}(\calV'') + \calU^0_{\bar \Sigma ^*}(\calV')\otimes U''$ where $U'$ and $U''$ are noz of $\calU^0_{\bar \Sigma ^*}(\calV')$ and $\calU^0_{\bar \Sigma ^*}(\calV'')$ respectively. Hence 
$$
\calU^1_{\bar \Sigma ^*}(\calV'\oplus \calV'')\simeq \calU^1_{\bar \Sigma ^*}(\calV')
\,\tensor {!} \, \calU^1_{\bar \Sigma ^*}(\calV'').
$$
Finally, by Remark \ref{rmk:UquozienteU0} we can realize $\calU_{\bar \Sigma ^*}(\calV)$
as the completion of the quotient of\ $\calU^1_{\bar \Sigma ^*}(\calV)$ by the ideal generated by elements in $\calV_{\bar\Sigma^*}(n)\boxtimes \calV_{\bar\Sigma^*}(n)(n\Delta)$. We denote this ideal by $\calR$ and by $\calR'$ and $\calR''$ the analogous ideals in $\calU^1_{\bar \Sigma^*}(\calV')$ and $\calU^1_{\bar \Sigma^*}(\calV'')$. By the definition of $\mu$ we have 
$$ \calR=\calR'\otimes \calU^1_{\oSigma^*}(\calV'') +\calU^1_{ \oSigma^*}(\calV')\otimes \calR'' . $$
Hence if we denote by $\calU'=\calU^0_{\oSigma ^*}(\calV')$ and $\calU''=\calU^0_{\oSigma ^*}(\calV'')$ then we have
\begin{align*}
	\calU_{\oSigma ^*}(\calV'\oplus \calV'')& \simeq
	\limpro \frac{\calU'\otimes \calU''}{U'\otimes \calU'' +\calU'\otimes U''+\calR'\otimes \calU''+\calU'\otimes \calR''}\\ & \simeq
	\limpro \frac{\calU'}{U'+\calR'}\otimes \frac{\calU''}{U''+\calR''}\\
	& \simeq \,\calU_{\oSigma ^*}(\calV')\, \tensor{!}\, \calU_{\oSigma ^*}( \calV'').
\end{align*}
where the limit is over $U'$ and $U''$ noz of $\calU'$ and $\calU''$.

\subsubsection{Pullbacks of chiral algebras}\label{ssec:LieUpullback}
Let $\grf:S'\lra S$ be a morphism of noetherian and quasi separated schemes and let $p':X'\lra S$ be the pullback of $p:X\lra S$ and $\Sigma'$ be the pullback of the sections $\Sigma$.  Let $\calV$ be a chiral algebra over $\oSigma$ such that $\calV=\limind\calV(n)$. We want to study the behaviour of the constructions of $\Lie_{\oSigma^*}$ and $\calU_{\oSigma^*}$ under  pullbacks. 

The following facts are easy to check and can be found in \cite{casmaffei1}.
\begin{enumerate}[\indent PR 1)]
	\item Let $\calF$ be a sheaf and let $\hat \calF$ be its completion. Let $\calG$ be a subsheaf of $\calF$ and $\calH$ be a subsheaf of $\hat \calF$ and assume that the image of $\calG$ in $\hat \calF$ is contained and is dense in $\calH$.  
	Then the completion of $\calF/\calG$ is isomorphic to the completion of $\calF/\overline \calG$ and to the completion of $\hat \calF/\calH$, in addition recall that $\hat\grf^*(\calF)\simeq \hat\grf^*(\hat \calF)$ (see \cite[Remark \ref{lemma31} and Lemma \ref{lem:pullbacklimits}]{casmaffei1});
	\item if $\calL$ and $\calM$ are topological sheaves on $S$, then $\hat\grf^*(\calL\tensor *\calM)\simeq 
	\hat\grf^*(\calL)\tensor *\hat\grf^*(\calM)$ as a topological sheaf and similarly for the $\tensor\ra$ product and the $\otimes^!$ product (see \cite[Proposition \ref{prop:pullbacktensorproduct}]{casmaffei1});
	\item if $\calF$ is a QCC sheaf, then $\hat\grf^*(\calF)$ is a QCC sheaf (see \cite[Remark \ref{rmk:qccpullback}]{casmaffei1});
	\item $\hat\grf^*$ commutes with direct sums (see \cite[Lemma \ref{lem:pullbacklimits}]{casmaffei1});
    \item $\hat \grf^*(\calF/\calG)$ is isomorphic to the completion of $(\hat \grf^*\calF)/(\hat\grf^*\calG)$ as a topological sheaf (here we abuse the notation a little bit since the map $\hat{\varphi}^*\calG \to \hat{\varphi}^*\calF$ does not need to be injective) (see \cite[Lemma \ref{lemma31}]{casmaffei1});
    \item For any inductive system $\calF_i$ the pullback $\hat \grf^*(\limind \calF_i)$ is isomorphic to the completion of $\limind \hat \grf^*\calF_i$ as a topological sheaf (see \cite[Lemma \ref{lem:pullbacklimits}]{casmaffei1});  
    \item $\hat\grf^*\Big( \Delta_{\oSigma,*}\calF\Big)\simeq \Delta_{\oSigma',*}\Big(\hat\grf^* \calF\Big)$ and if $\calF$ is a right $\calD_X$ module then $\hat\grf^*\Big( \Delta_{\oSigma,!}\calF\Big)\simeq \Delta_{\oSigma',!}\Big(\hat\grf^* \calF\Big)$ (see \cite[Remark \ref{rmk:pullbackpushdifferential}]{casmaffei1});
    \item if $\calF$ is a complete topological sheaf of $\pbarOpolim{2}$-modules then $\hat \grf^*\big(\calF(n\Delta_{\oSigma})\big) = \hat \grf^*\big(\calF\big)(n\Delta_{\oSigma'})$ as a topological sheaf (see \cite[Remark \ref{rmk:pullbackpolidiagonal}]{casmaffei1});
	\item $\hat\grf^* T_{\oSigma}\simeq T_{\oSigma'}$ and $\hat\grf^* \calD_{\oSigma}\simeq \calD_{\oSigma'}$ and $\hat\grf^*\pbarOmegapoli\simeq \Omega^1_{\overline{\Sigma'}^*}$ as topogical sheaves (see \cite[Remark \ref{rmk:pullbacktangentdifferential}]{casmaffei1});
\end{enumerate}

When dealing with pullback of a chiral algebra $\calV$, it is better to keep track of the filtered topological structure (i.e. the topology on $\calV(n)$), rather than the topological structure on the whole $\calV$. In order to deal with this we define the filtered pullback of $\calV$ with respect to $\grf$ as 
$$ \calV'=\limind \hat\grf^* \calV(n) $$
as a topological sheaf.

\begin{lemma}\label{lem:pullbackchiralalgebra}
	With the above notation, $\calV'$ comes with a natural filtration $\calV'(n) = \hat{\varphi}^*\calV(n)$. The chiral bracket $\mu$ of $\calV$ induces a morphism $$\mu' : \calV'\exttensor{\fil} \calV' (\infty\Delta') \to \Delta_{\oSigma',!}\calV',$$ in addition, the chiral unity morphism $u : \Omega^1_{\oSigma} \to \calV$ induces a morphism $u' : \Omega^1_{\oSigma'} \to \calV'$. With this data $\calV'$ is a chiral algebra.

	If in addition $\calV$ is constructed as a chiral algebra generated by mutually local fields in $\mF^1_{\Sigma,\calU}$ there is a natural morphism $\calV' \to \mF^1_{\Sigma',\calU'}$ which commutes with the chiral brackets and the unity morphisms.
\end{lemma} 

\begin{proof}
	Let $n,b$ be integers and suppose that $\mu$ induces a morphism $\mu_n : \calV(n)\otimesst \calV(n) (n\Delta_{\oSigma}) \to \Delta_{\oSigma,!}^{\leq m}\calV(m)$, then by combining PR2, PR7 and PR8, the pullback of $\mu$ identifies with a morphism
	$$\mu'_n : \calV'(n)\otimesst \calV'(n) (n\Delta_{\oSigma'}) \to \Delta_{\oSigma',!}^{\leq m}\calV'(m),$$
	analogously, PR9 induces a morphism $u' :\Omega^1_{\oSigma'} \to \calV'$. 
	The map $\mu'_n$ determines the chiral product $\mu:\calV\tensor{\fil} \calV' (\infty\Delta_{\oSigma'}) \to \Delta_{\oSigma',!}\calV'$.
	The fact that these satisfy the axioms of a chiral algebra follows by functoriality. The claim about $\mF^1$ follows by the fact that the its chiral bracket is defined using the multiplication structure on $\calU$ and the canonical morphisms $\pbarOpolim{2}(\infty\Delta) \to \pbarOpoli \otimesr \pbarOpoli$, which behave well under pullback. 
\end{proof}

\begin{remark}
	Assume  that, locally on $S$, there exists \QCC sub $\calO_S$-modules $\calW_n$ of $\calV$ such that $\calV(n)=\bigoplus_{i=0}^{n}W_i$. In this case, if we set $\calW'_n=\hat\grf^*(\calW_n)$  we have that $\calV'=\bigoplus_i \calW'_i$ and 
$\calV'(n)=\bigoplus_{i=0}^n \calW'_i$. Under this assumption the sheaves $\calV'(n)$ are closed subsheaves of $\calV'$ with the induced topology and $\calV'$ is their colimit. This assumption is satisfied for  $\calV=\calVg$. 
\end{remark}

\subsubsection{Pullback of \texorpdfstring{$\Lie_{\oSigma^*}$}{the Lie algebra of Fourier coefficients}}

We want to compare $\calU_{\overline{\Sigma'}^*}(\calV')$ and the pullback of $\calU_{\oSigma^*}(\calV)$. To do that that we need to study first the behaviour of $\Lie_{\oSigma^*}$ under pullbacks. 

\begin{proposition}\label{prop:pullbackLie}
	Let $X,S,\Sigma$ be as usual and let $\calV$ be a chiral algebra over $\pbarO$ with a filtration of \QCC sheaves $\calV(n)$ such that $\calV = \varinjlim_n \calV(n)$ as topological sheaves. Let $\varphi: S' \to S$ be a morphism of quasi-separated schemes. With the above notation we have
$$
\hat \grf^* \Big( \oLie_{\oSigma^*}(\calV) \Big)\simeq \oLie_{\overline{\Sigma'}^*}(\calV')
$$
\end{proposition}

\begin{proof}
We prove first that $\hat{\grf}^*(h^0(\calV_{\overline{\Sigma'}^*}))$ is the completion of 
$h^0(\calV_{\oSigma^*}')$. Let us start by recalling that by definition, as topological sheaves, we have
$$
h^0(\calV_{\oSigma^*}) = \limind_{n} \frac{\calV_{\oSigma^*}(n)}{\calV_{\oSigma^*}(n)\cap \left(\calV_{\oSigma^*}\cdot T_{\oSigma^*}\right)}
$$
Since filtered colimit of sheaves are exact and $\calV_{\oSigma^*}=\limind\calV_{\oSigma^*}(n)$ we deduce
$$
h^0(\calV_{\oSigma^*}) = \limind_{n,m} \frac{\calV_{\oSigma^*}(n)}{\calV_{\oSigma^*}(n)\cap \left(\calV_{\oSigma^*}(m)\cdot T_{\oSigma^*}\right)}
$$
For all $m$ we choose $n_m\geq m$ such that $\calV_{\oSigma^*}(m)\cdot T_{\oSigma^*}\subset \calV_{\oSigma^*}(n_m)$.
The couples $(n_m,m)$ are cofinal in $\mN\times\mN$ hence we have 
$$ h^0(\calV_{\oSigma^*}) = \limind_{m} \frac{\calV_{\oSigma^*}(n_m)}{\calV_{\oSigma^*}(n_m)\cap \left(\calV_{\oSigma^*}(m)\cdot T_{\oSigma^*}\right)}
= \limind_{m} \frac{\calV_{\oSigma^*}(n_m)}{ \calV_{\oSigma^*}(m)\cdot T_{\oSigma^*}}. $$
Hence,  property PR5 of Sections \ref{ssec:LieUpullback} we have that $\hat \grf^*\big(h^0(\calV_{\oSigma^*})\big)$ is the completion of 
$$
\limind _m \frac{\hat \grf^* \calV_{\oSigma^*}(n_m)}{\hat\grf^*\Big(\calV_{\oSigma^*}(m)\cdot T_{\oSigma^*}\Big)}=\limind _m 
\frac{\calV_{\oSigma^*}'(n_m)}{\hat\grf^*\Big(\calV_{\oSigma^*}(m)\cdot T_{\oSigma^*}\Big)}.
$$
Finally by properties PR2 and PR8  above we have that the denominator contains $\calV_{\overline{\Sigma'}^*}'(m)\cdot T_{\overline{\Sigma'}^*}$ and is contained in the closure of the latter in $\calV_{\oSigma^*}'(n_m)$. Hence, by PR1, the completion of this colimit is isomorphic to the completion of $h^0(\calV_{\oSigma^*}')$.

Similarly (although we do not need to take any filtration) we see that $\hat\grf^*h^0(\pbarOmegapoli)$ is the completion of $h^0(\Omega^1_{\overline{\Sigma'}^*})$. Now the claim about $\Lie_{\oSigma^*}$  follows from $\hat\grf^*\calO_S=\calO_{S'}$,
$\hat \grf^*u=u'$ and $\hat\grf^*\Res_\Sigma=\Res_{\Sigma'}$ together with PR4-5.  
\end{proof}

\subsubsection{Pullback of \texorpdfstring{$\calU_{\oSigma^*}$}{the enveloping algebra}}
We now study the behaviour of $\calU_{\oSigma^*}$ under pullbacks. 
\begin{lemma}
	Let $X,S,\Sigma$ be as usual and let $\calV$ be a chiral algebra over $\pbarO$ with a filtration of \QCC sheaves $\calV(n)$ such that $\calV = \varinjlim_n \calV(n)$ as topological sheaves. Let $\varphi: S' \to S$ be a morphism of quasi-separated schemes. With the above notation we have
$$
\calU^1_{\overline{\Sigma'}^*}(\calV')\simeq \hat \grf^*\left(\calU^1_{\oSigma^*}(\calV)\right).
$$
\end{lemma}
\begin{proof}
Let $\calL = \grf^*\Lie_{\oSigma^*}(\calV)$ (this is the usual pullback) and recall that its completion, by Proposition \ref{prop:pullbackLie}, is isomorphic to $\oLie_{\overline{\Sigma'}^*}(\calV')$. Denote by $\calU(\calL)$ and $\calU^0(\calL)$ the enveloping algebra of $\calL$, and its quotient by $\mathbf u-\mathbf{1}$ respectively (where $\mathbf{1}$ is the unit morphism of $\calU(\calL)$). Let $\calU^1(\calL)$ be the completion of $\calU^0(\calL)$.
It follows by Remark \ref{rmk:UdaOLie} that $\calU^1_{\overline{\Sigma'}^*}(\calV')$ is isomorphic to $\calU^1(\calL)$,.

It is easy to see (for example from the fact that $\grf^*$ is a left adjoint) that non completed pullback commutes with taking enveloping algebras, hence $\calU(\calL)$ is the pullback of the enveloping algebra of $\Lie_{\oSigma^*}(\calV)$. Since $\grf^*$ is right exact we have also 
that $$\calU^0(\calL)\simeq \grf^*(\calU^0  _{\oSigma^*}(\calV) ).$$ 
Finally a fsonoz in the right hand side are given by pullback of left ideals generated by noz of $\Lie_{\oSigma^*}(\calV)$. Since $\grf^*$ commutes with tensor product and preserves images of sheaves, these are exactly the left ideals generated by noz of $\calL$. Hence the completions for these topologies are isomorphic. 
\end{proof}

\begin{proposition}\label{prop:pullbackU}
Let $X,S,\Sigma$ be as usual and let $\calV$ be a chiral algebra over $\pbarO$ with a filtration of \QCC sheaves $\calV(n)$ such that $\calV = \varinjlim_n \calV(n)$ as topological sheaves. Let $\varphi: S' \to S$ be a morphism of quasi-separated schemes. With the above notation we have
$$\hat \grf^*(\calU_{\oSigma^*}(\calV))\simeq \calU_{\overline{\Sigma'}^*}(\calV').$$	
\end{proposition}

\begin{proof}
Recall that given a field $X:\pbarOpoli\lra \calU^1_{\oSigma^*}(\calV) $ its pullback is a map
$\hat \grf^* X:\hat \grf^*\pbarOpoli\lra \hat \grf^*\calU^1_{\oSigma^*}(\calV)$. Hence, using the previous Lemma is a field from $\calO_{\overline{\Sigma'}^*}$ to $\calU^1_{\overline{\Sigma'}^*}(\calV')$; this induces a map $$\varphi^*_{\mF^1} : \hat{\varphi}^*\mF^1_{\Sigma,\calU^1_{\oSigma^*}(\calV)} \to \mF^1_{\Sigma',\calU^1_{\overline{\Sigma'}^*}(\calV')}.$$ By construction, using the notation of Section \ref{ssec:envelopingalgchiralalg}, the following diagram is commutative:
$$
\xymatrix{
\hat\grf^*\big(\calV \ar[rr]^-{\hat\grf^*\Psi} \ar[d]^{\simeq}\big) && \hat\grf^*\big(\mF^1_{\Sigma,\calU_{\oSigma^*}^1(\calV)}\big) \ar[d]^{\grf^*_{\mF^1}}\ar[rr]^{\ev_1} &&
\hat\grf^*(\calU_{\oSigma^*}^1(\calV))\ar[d]^\simeq
\\
\calV' \ar[rr]^{\Psi'} && 
\mF^1_{\Sigma',\calU_{\overline{\Sigma'}^*}^1(\calV')}\ar[rr]^{\ev_1} &&
\calU_{\overline{\Sigma'}^*}^1(\calV')}
$$
and by Lemma \ref{lem:pullbackchiralalgebra} the morphism $\Psi'$ is compatible with the chiral brackets. Now recall that $\calU_{\oSigma^*}(\calV)$ is defined as the completion of the quotient of $\calU^1_{\oSigma^*}(\calV)$ by the relations given by formula \eqref{eq:defUV}. Let us formulate this fact differently. Let  $\psi_n
 : \calV_{{\oSigma}^*}(n)\exttensor{\fil}\calV_{\overline{\Sigma}^*}(n)(n\Delta) \to \calU^1_{\overline{\Sigma}^*}(\calV)
$ be the map given by formula \eqref{eq:defUV}, and let $\psi=\sum_n \psi_n$ be the coproduct of these maps. Define
$$\calR(\calV):  \calU^1_{\overline{\Sigma}^*}(\calV) \otimesr
\left(\bigoplus_{n \geq 0}\, \Big(\calV_{\oSigma^*}(n)\otimesst \calV_{\oSigma^*}(n)\Big)(n\Delta_X)\right)
\otimesr \calU^1_{\overline{\Sigma}^*}(\calV) \lra \calU^1_{\oSigma^*}(\calV)$$
as $\calR(\calV)(a\otimes ( \sum v_n )\otimes b)=a\cdot (\sum \psi_n(v_n ))\cdot b$. 
By Remark \ref{rmk:UquozienteU0} and PR6 the algebra $\calU^1_{\oSigma^*}(\calV)$ is the completion the cokernel of these map. 

By property PR2, the commutativity of the diagram above, the previous Lemma and the fact that the pull back of $\mu_\mF$ is equal to $\mu_{\mF}$ (see \cite[ Lemma \ref{lemma:pullbackoffields}]{casmaffei1}), we see that the pull back of $\calR(\calV)$ is equal to $\calR(\calV')$. Hence, our thesis, follows from property PR5 of Section \ref{ssec:LieUpullback}. 
\end{proof}

\subsection{Factorization properties}\label{ssec:factorizationU}
Recall Definitions \ref{def:openclosedfactorization}, \ref{def:factorizationcompletesheafsigma}. Assume  that the chiral algebra $\calV$ is \QCCF, satisfies the assumption of Proposition \ref{prop:pullbackU} and that it has a factorization structure, so that for every $\pi:J\surjmap I$ we have isomorphisms
\[
\Ran^{\calV}_{J/I} : \hat{i}_{J/I}^*\calV_{\Sigma_J} \to \calV_{\Sigma_I}, \qquad \fact^{\calV}_{J/I} :  \hat{j}_{J/I}^*\left(\prod_{i\in I} \calV_{\Sigma_{J_i}}\right) \to \hat{j}_{J/I}^* \calV_{\Sigma_J}.
\]
Then we can apply Proposition \ref{prop:pullbackU} to $\grf=i_{J/I}$ and we obtain an isomorphism
$$
\hat i^*_{J/I}\big(\calU_{\oSigma^*_J}(\calV_{\Sigma_J})\big)\simeq \calU_{\oSigma^*_I}(\hat i _{J/I}\calV_{\Sigma_J})\simeq \calU_{\oSigma^*_I}(\calV_{\Sigma_I}).
$$
Similarly applying Proposition \ref{prop:pullbackU} to $\grf=j_{J/I}$ and the results of Section \ref{ssec:LieUprodotti} we obtain an isomorphism
$$
\hat j^*_{J/I}\big(\calU_{\oSigma^*_J}(\calV_{\Sigma_J})\big)\simeq \calU_{\oSigma^*_I}(\hat j _{J/I}\calV_{\Sigma_J})\simeq \calU_{\oSigma_I^*}\big(\prod_{i\in I} \calV_{\Sigma_{J_i}}\big)\simeq \tensor{!}\; \calU_{\oSigma_{J_i}^*}(\calV_{\Sigma_{J_i}}).
$$
Hence the associative algebra $\calU_{\oSigma^*}(\calV)$ has a factorization structure. This discussion in particular applies to the case of $\calV=\calV_\Sigma^\kappa(\gog)$ (see \cite[Proposition \ref{prop:factpropertieschiralalg}]{casmaffei1} and Section \ref{ssec:richiamifactchiral}).

\begin{proposition}\label{prop:factpropertiescalu}
	Assume that $S$ is integral, noetherian and quasi-separated. With respect to notation \ref{ntz:factorization}, there are natural isomorphisms
		\begin{align*}
		\Ran^{\calU (V)}_{J/I} &: \hat{i}_{J/I}^*\left(\calU_{\overline{\Sigma}_{J}^*}(\calV^\kappa_{\Sigma_{J}}(\gog))\right) \to \calU_{\overline{\Sigma}_{I}^*}(\calV^\kappa_{\Sigma_{I}}(\gog)) \\
		\fact^{\calU (V)}_{J/I} &: \hat{j}_{J/I}^*\left( \bigotimes^!_{i\in I} \calU_{\overline{\Sigma}_{I_j}^*}(\calV^\kappa_{\Sigma_{J_i}}(\gog)) \right) \to \hat{j}_{J/I}^*\left(\calU_{\overline{\Sigma}_{J}^*}(\calV^\kappa_{\Sigma_{J}}(\gog))\right)
		\end{align*}
		which make $\calU_{\overline{\Sigma}^*}(\VkSg)$ into a complete topological factorization algebra. The map $\Phi_{\Sigma}$ of Proposition \ref{prop:descrenvelopingalg} preserves the factorization structures, where we equip $\calU_\kappa(\hat{\gog}_\Sigma)$ with the structure of  \cite[Proposition \ref{prop:factpropertiesgogcalugog}]{casmaffei1} (see also Section \ref{ssec:richiamiaffinealgebra}).
\end{proposition}

\begin{proof}
	The first claim is a particular case of the previous discussion.  Since the factorization structure of $\calU_{\oSigma^*}(\VkSg)$ is built from that of $\VkSg$, to show that $\Phi_{\Sigma}$ and the factorization structure on $\calU_\kappa(\hat{\gog}_\Sigma)$, it is enough to check that the map $\ev_1 : (\VkSg)_{\oSigma^*} \to \calU_{\kappa}(\hat{\gog}_\Sigma)$ is compatible with the factorization structures. Let us recall that this map is the composition of $(\VkSg)_{\oSigma^*} \to \mF^1_{\Sigma,\gog} \to \calU_{\kappa}(\hat{\gog}_\Sigma)$, where the second one is evaluation at $1 \in \pbarOpoli$. The factorization structure on $\VkSg$ is induced by that of $\mF^1_{\Sigma,\gog}$ (see Sections \ref{ssec:richiamifactfields},\ref{ssec:richiamifactchiral}) so that the fact that evaluation at $1 \in \pbarOpoli$ is compatible with the factorization structures is evident.
\end{proof}

\section{Opers on \texorpdfstring{$\overline{\Sigma}$ and $\overline{\Sigma}^*$}{the formal neighborhood}}\label{sec:opersigma}

We adapt the definition of opers to our $\overline{\Sigma}$ geometric setting. We refer to \cite{casarin2025bundle}, for the known material from which we will adapt the definitions. We fix as always a smooth family of curves on $X \to S$, a set of sections $\Sigma$ and we study opers on $\overline{\Sigma}$ and $\overline{\Sigma}^*$. We will first focus on the case of $\overline{\Sigma}$, which we treat as a formal scheme, and then extend our definitions to $\overline{\Sigma}^*$. This covers the special case $S= \Spec \mC$, $\calO_{\overline{\Sigma}}= \mC[[t]]$ as well. 

Let us say some words to justify the definitions we will give in the following pages by recalling some known constructions. Recall that to any smooth curve $C$ over $\mC$ there is a canonical $\Autpiu{} O$ bundle called $\Aut_C$ (see \cite[Section 6.5]{frenkel2004vertex} or \cite{casarin2025bundle}). It is known (c.f. \cite[Proposition 3.3.3]{casarin2025bundle}) that, for any $(\goF,\nabla)$ oper on $C$ the $B$-bundle $\goF$ is always isomorphic to a certain bundle $\goF_0$, which is a twist of the bundle $\Aut_C$ for a fixed morphism $\Autpiu{} O \to B$. Thus, in order to give appropriate definitions in our $\oSigma,\oSigma^*$ setting, we need to treat $\overline{\Sigma}$ as a more geometric object, so that we may construct an analogue of the bundle $\goF_0$ on it and speak about connections on $\goF_0$. In order to do so we will refer to \cite[Section \ref{ssec:canonicaltorsor}]{casmaffei1} where $\Aut_{\oSigma}$, an analogue of the torsor $\Aut_C$, is constructed. The reader can safely skip this section, as definitions are completely analogous to the case of a smooth curve over $\mC$, we give the constructions for completeness.

\subsection{Recollections on spaces and on the \texorpdfstring{torsor $\Aut_{\oSigma}$}{canonical torsor}}

We start by recalling some terminology developed in \cite[Sections \ref{ssec:recollectionsauto}, \ref{ssec:spacesjetscanonicalbundle}]{casmaffei1}.

\subsubsection{Spaces}

We refer to \cite[Section \ref{sssec:spaces}]{casmaffei1} for our geometric setting and consider the category of sheaves of sets on $\Aff_S$ (the category of affine schemes with a map to $S$) for the Zariski topology, which we denote by $\Sp_S$. We will be referring to the objects of this category simply as \emph{spaces}. The category of spaces is naturally equivalent to the category of sheaves for the Zariski topology on $\Sch_S$, so that it makes sense to evaluate $X \in \Sp_S$ on an arbitrary scheme $T \to S$.

To any space $X$ there there is attached a sheaf on $S$ of functions $\Fun(X)(U) = \Hom_{\Sp_S}(X|_U,\mA^1|_U)$ for any Zariski open subset $U \subset S$. The topological sheaf $\pbarO$ determines a space $\oSigma$, which, for any $\Spec R \to S$ satisfies
\[
	\oSigma(R) = \Homcont_R\left( \calO_{\oSigma_R},R\right),
\]
where $\calO_{\oSigma_R}$ is the completed pullback of $\pbarO$ along $\Spec R \to S$. We have that $\oSigma$ is an ind-affine scheme over $S$ and that $\Fun(\oSigma) = \pbarO$.

It makes sense to speak about vector bundles on spaces just in the same way as in the case of an ordinary scheme, but we require local triviality on $S$. So a vector bundle on a space $X$ is a space $V \to X$ such that there exists an open Zariski cover $\cup\, U_i = S$ and isomorphism $V|_{X|_{U_i}} \simeq X|_{U_i} \times_S \mA^r_{U_i}$ for which the transition maps are linear. In particular we have a notion of vector bundles on $\oSigma$ and it is easy to check that they correspond to locally free sheaves of $\pbarO$-modules on $S$.

Analogously to the case of vector bundles, there is a well defined notion of a $G$-torsor on a space $X$, for any group sheaf $G : \Aff^{\op}_S \to \mathrm{Grp}$. Again, we require that our torsors be locally trivial for the Zariski topology on $S$. In the case where $G$ is the pullback of an affine group scheme over $\mC$ and $V$ is a $G$-representation, given any $G$ torsor $\goF$ on $X$ it is possible to construct the twisted vector bundle $V_\goF$ by taking the quotient of $\goF \times V$ along the diagonal $G$ action. 

\subsubsection{The canonical torsor}

Recall the definition of the group schemes $\Aut O$, $\Autzero{} O$ defined over $\mC$ as 
\begin{align*}
	\Aut O (R) &= \Aut^{\cont}_R(R[[z]]); \\
	\Autpiu{} O (R) &= \{ \varphi \in \Aut O(R) \text{ such that } \varphi(z) \in (z) \}.
\end{align*}

Attached to any smooth curve $C$ over $\mC$, there is a canonical $\Autpiu{} O$-torsor called $\Aut_C$ (see for instance \cite[Section 6.5]{frenkel2004vertex}) which is a subscheme of the jet scheme $JC$. In \cite[Sections \ref{sssec:jetsonspaces}, \ref{ssec:canonicaltorsor}]{casmaffei1} it is shown that these constructions can be performed also in the case where we replace $C$ with $\oSigma$.

Indeed, it is possible to apply the jet construction to the case of spaces as well (see \cite[Section \ref{sssec:jetsonspaces}]{casmaffei1}), defining, for a space $X$, 
\[
	J_nX(R) = X(R[z]/z^{n+1}) \quad \text{ and } \quad JX = \varprojlim J_nX.
\]
In the case where $X = \oSigma$, it turns out that $J_2\oSigma$ is a vector bundle and its sheaf of sections identifies with $T\oSigma$. In addition, $J\oSigma$ identifies with the functor $R \mapsto \Homcont_R(\calO_{\oSigma_R},R[[z]])$ so that there is a natural left action of $\Autpiu{S} O$ on $J\oSigma$ for which the projection map $J\oSigma \to \oSigma$ is invariant. 

It is then possible to define a subfunctor $\Aut_{\oSigma} \subset J\Sigma$ which is stable for the above $\Autpiu{S} O$ action and which is an $\Autpiu{S} O$ torsor over $\oSigma$. Any local coordinate $t$ induces a trivialization 
\[
	\triv_t : \oSigma \times \Autpiu{S} O \to \Aut_{\oSigma}
\]
and one can show that for any two local coordinates $t,s$, the transition isomorphism $\triv_s^{-1}\triv_t$ is determined by the following element of $\Autpiu{S} O (\pbarO(S))$ (see \cite[Proposition \ref{prop:changecoordinate}]{casmaffei1}):
\[
	\triv^{\univ}_{st} = \sum_{k \geq 1} \frac{1}{k!}(\partial_t^ks)z^k.
\]

\subsection{\texorpdfstring{$G$}{G}-opers on \texorpdfstring{$\overline{\Sigma}$}{the formal neighborhood} and \texorpdfstring{$\oSigma ^*$}.}

In this section we introduce connections on $G$-torsors and opers over $\overline{\Sigma}$. 

\begin{definition}[Atiyah's bundle]
	Given a $G$ torsor $\goF$ on $\overline{\Sigma}$ the space $\calE_{\goF} = (T\goF)/G$ is naturally a vector bundle over $\overline{\Sigma}$ (recall that here $T\goF = J_2\goF$). In the (local) case where $\goF \simeq \overline{\Sigma} \times G$ is trivial we have
	\[
		\calE_{\goF} \simeq T\overline{\Sigma} \times \gog,
	\]
	where $\gog$ is the Lie algebra of $G$.
\end{definition}

\begin{definition}\label{def:connectionsigmabar}
	Let $\goF$ be a $G$ torsor on $\overline{\Sigma}$. Then there is a canonical exact sequence of locally free $\pbarO$-modules
	\[
		0 \to \gog_{\goF} \to \calE_{\goF} \to \Tan_{\overline{\Sigma}} \to 0.
	\]
	A \emph{connection} on $\goF$ is an $\pbarO$-linear section $\Tan_{\overline{\Sigma}} \to \calE_{\goF}$. The set of connections is naturally a sheaf for the Zariski topology on $S$, to be denoted by $\Conn(\goF)$. This is naturally a $\gog_{\goF}\otimes_{\pbarO}\Omega^1_{\overline{\Sigma}}$ torsor (as sheaves on $S$).
\end{definition}

\subsubsection{Opers on \texorpdfstring{$\overline{\Sigma}$}{ the formal neighbourhood}}

Consider two groups $B \subset G$ and let $\goF_B$ be a $B$ torsor on $\overline{\Sigma}$. Consider the induced $G$-torsor $\goF_G$ and the natural inclusion $\goF_B \subset \goF_G$. This induces a diagram
	\[\begin{tikzcd}
	0 & {\gob_{\goF_B}} & {\calE_{\goF_B}} & {\Tan_{\overline{\Sigma}}} & 0 \\
	0 & {\gog_{\goF_B}} & {\calE_{\goF_G}} & {\Tan_{\overline{\Sigma}}} & 0
	\arrow[from=1-1, to=1-2]
	\arrow[from=1-2, to=1-3]
	\arrow[hook, from=1-2, to=2-2]
	\arrow[from=1-3, to=1-4]
	\arrow[hook, from=1-3, to=2-3]
	\arrow[from=1-4, to=1-5]
	\arrow["{=}"{description}, from=1-4, to=2-4]
	\arrow[from=2-1, to=2-2]
	\arrow[from=2-2, to=2-3]
	\arrow[from=2-3, to=2-4]
	\arrow[from=2-4, to=2-5]
\end{tikzcd}\]
so that we have a natural inclusion $\Conn(\goF_B) \subset \Conn(\goF_G)$. Recall that under our assumptions $\goF_B$ is locally trivial for the Zariski topology on $S$. Then there is a canonical map $c : \Conn(\goF_G) \to (\gog/\gob)_{\goF_B}\otimes_{\pbarO}\Omega^1_{\overline{\Sigma}}$ which is constructed as in the usual case: given a connection $\nabla$, the element $c(\nabla) \in (\gog/\gob)_{\goF_B} \otimes \Omega^1_{\overline{\Sigma}}$ is constructed locally on $S$, on any open subset in which $\goF_B$ is trivial is defined by taking an arbitrary connection $\nabla_{\gob}$ on $\goF_B$ (which locally exists) and then computing $\nabla - \nabla_{\gob} \text{ mod } \gob_{\goF_B}\otimes\Omega^1_{\overline{\Sigma}}$, which does not depend on the choice of $\nabla_{\gob}$. The morphism $c$ is equivariant with respect to the action of $\gog_{\goF}\otimes\Omega^1_{\overline{\Sigma}}$ and therefore surjective as a morphism of sheaves on $S$.

\medskip

We will focus from now on the case where $\gog$ is a finite simple Lie algebra, $G$ is the attached algebraic group of adjoint type and $B$ is a Borel subgroup. We will consider only a specific $B$-torsor on $\overline{\Sigma}$, the analogue of the torsor $\goF_0$ of Proposition 3.3.3 of \cite{casarin2025bundle}. To recall its construction let us set up some notations. 

Fix a maximal torus $H \subset B$ and consider the induced root system $\Phi$ and root decomposition $\gog = \goh\oplus \oplus_{\alpha \in \Phi} \gog^\alpha$, where $\goh$ is the Lie algebra of $H$. The choice of $B$ induces a set of simple positive roots which we call $\Delta = \{ \alpha_i\}$. Consider $h_0 \in \goh$ to be the twice the sum of the fundamental coweights, so that $h_0$ acts on $\gog^\alpha$ by $2|\alpha|$ (here if $\alpha = \sum n_i\alpha_i$ we set $|\alpha| = \sum n_i$). Fix a principal nilpotent element $f_0 \in \oplus_{i} \gog^{-\alpha_i}$, from such data one can extract an $\mathfrak{sl}_2$-triple $\{f_0,h_0,e_0\} \subset \gog$, this determines a morphism $r_1:SL(2)\lra G$ which sends the standard $\mathfrak{sl}_2$-triple to $\{f_0,h_0,e_0\}$. 
Since $G$ is  of adjoint type, $\frac{1}{2}h_0$ may be exponentiated to a morphism $\check{\rho} : \mG_m \to H$, the principal cocharacter and $r_1$ factors through a morphism $r_2:PGL(2)\lra G$. We denote by $r$ the restriction of $r_2$ to the standard Borel $(B_2)_{\mathrm{ad}}$ of $PGL(2)$ which we can represent as follows:
\[
	(B_2)_{\mathrm{ad}} \simeq \left\{ \begin{pmatrix} a & b \\ 0 & 1 \end{pmatrix} \right\}.
\]

Notice that there is a natural identification $\Autpiu{3} O \simeq (B_2)_{\mathrm{ad}}$, obtained as follows:
\[
	\left( t \mapsto at + bt^2 \right) \mapsto \begin{pmatrix}
		a & b/a \\ 0 & 1
	\end{pmatrix}
\]

Via these constructions, having at hand the torsor $\Aut_{\overline{\Sigma}}$ we may construct, using the morphism of group schemes $r_O : \Autpiu{} O \to \Autpiu{3} O \simeq (B_2)_{\text{ad}} \xrightarrow{r} B$, the $B$ torsor on $\overline\Sigma$ given by:
\begin{equation}\label{eq:deffzerosigmabar}
	\goF_0 \stackrel{\text{def}}{=} \Aut_{\overline{\Sigma}} \times_{r_O} B.
\end{equation}

We will give the definition of opers only relative to $\goF_0$. We do this in order to give easier definitions; this is justified by the fact that in the classical case of a smooth curve it is known that all opers have the same underlying $B$-torsor, which is the analogue of $\goF_0$ (see for instance \cite{casarin2025bundle}).

\begin{definition}\label{def:opersonsigmabar}
	A $\gog$-oper on $\overline{\Sigma}$ is a \index{$\gog$-oper on $\overline{\Sigma}$, $\Op_{\gog}(\overline{\Sigma})$}connection $\nabla$ on $(\goF_0)_G$ such that
	\begin{enumerate}
		\item $c(\nabla) \in (\text{gr}^{-1} \gog)_{\goF_0} \otimes \Omega^1_{\overline{\Sigma}} \subset (\gog/\gob)_{\goF_0}\otimes\Omega^1_{\overline{\Sigma}}$;
		\item For each negative simple root $\alpha$ the section of $c(\nabla)^\alpha \in \Gamma(\overline{\Sigma},(\gog/\gob)^{\alpha}_{\goF_0} \otimes \Omega^1_{\overline{\Sigma}})$ never vanishes. Where $(\gog/\gob)^{\alpha}$ is the root space relative to $\alpha$.
	\end{enumerate}
	There is an obvious notion of isomorphism of $\gog$-opers, we define $\Op_\gog(\overline{\Sigma})$ to be the groupoid of $\gog$-opers and isomorphisms between them. As in \cite[1.3]{beilinson2005opers} one can prove that a $\gog$-oper does not have any non-trivial automorphisms. It follows that $\Op_{\gog}(\overline{\Sigma})$ is equivalent to a set and that the assignment $U \mapsto \Op_{\gog}(\overline{\Sigma}_U)$ is a sheaf of sets on the Zariski topology on $S$.
\end{definition}

\begin{definition}
	All definitions above can be given in families, so that we have a functor $\Op_{\gog}(\overline{\Sigma}) \in \Sp_S$ which is naturally a sheaf for the Zariski topology on $S$. This, for any $R$ point $\Spec R \to S$ is defined by 
	\[\Op_\gog(\overline{\Sigma})(R) := \Op_{\gog}(\overline{\Sigma}_R).\]To avoid possible confusion let us emphasize that when we write $\Op_\gog(\overline{\Sigma})$ we mean the set of opers on $\overline{\Sigma}$ as defined in \ref{def:opersonsigmabar}, this set coincides with the value on $S$ of the functor just introduced.
\end{definition}

\subsubsection{Opers on \texorpdfstring{$\overline{\Sigma}^*$}{the pointed formal neighbourhood}}

We extend the definition of Opers on $\overline{\Sigma}^*$ essentially by linearity from the $\overline{\Sigma}$ case. We could not give this definition directly since we cannot find a suitable space in $\Sp_S$ attached to $\overline{\Sigma}^*$. Notice that given a $G$-torsor on $\overline{\Sigma}$ the exact sequence $0 \to \gog_{\goF} \to \calE_{\goF} \to \Tan_{\overline{\Sigma}} \to 0$ induces an exact sequence of locally free $\pbarOpoli$-modules
\[
	0 \to \gog_{\goF,\overline{\Sigma}^*} \to \calE_{\goF,\overline{\Sigma}^*} \to \Tan_{\overline{\Sigma}^*} \to 0.
\]
obtained by tensoring up with $\pbarOpoli$ along $\pbarO$.

Considering the above exact sequence we give the definition of opers on $\overline{\Sigma}^*$ exactly in the same way of $\overline{\Sigma}$, replacing every occurrence of $\gog_{\goF,\overline{\Sigma}},\calE_{\goF,\overline{\Sigma}}, T_{\overline{\Sigma}}$ with $\gog_{\goF,\overline{\Sigma}^*},\calE_{\goF,\overline{\Sigma}^*}, T_{\overline{\Sigma}^*}$ respectively.

\subsection{Canonical representative for Opers}\label{ssec:canonicalrepresentativesopers}

We recall here the description of $\Op_\gog(\overline{\Sigma})$ in the case where there exists a coordinate $t \in \pbarO$, this discussion holds for $\overline{\Sigma}$ replaced by $\overline{\Sigma}^*$.
Recall that we have a fixed principal $\mathfrak{sl}_2$ triple $\{f_0,h_0,e_0\} \in \gog$ and that we denote by $\check{\rho} : \mG_m \to G$ by exponentiation of $\frac{1}{2}h_0$.
Let $\Vcan = \gog^{e_o}$ then $\frac 12 h_0$ acts on $\Vcan$ with eigenvalues $d_i$, where $d_i+1$ are the exponents of $\gog$. Let $x_i$ be a basis of eigenvectors for $\Vcan$ such that $x_i$ has eigenvalue $d_i$.

The following Proposition goes back to \cite{drinfeld1984lie}.

\begin{proposition}\label{prop:art2descrlocopervcan}
	Let $\nabla \in \Op_\gog(\overline{\Sigma})$ be an oper. Since $\overline{\Sigma}$ admits a coordinate $\goF_0$ is trivial. There is a unique element $\omega_\nabla \in V\otimes\Omega^1_{\overline{\Sigma}} = \Vcan \otimes \pbarO dt$ such that
	\[
		(\goF_0,\nabla) \simeq (\overline{\Sigma}\times G, d + f_0 dt + \omega_\nabla)
	\]
	as opers. Since opers do not have automorphisms, the above isomorphism is unique. This is true in families so that given any map $\Spec R \to S$
	\[
		\Op_\gog(\overline{\Sigma})(R) = \Op_{\gog}(\overline{\Sigma}_R) \simeq \Vcan \otimes \Omega^1_{\overline{\Sigma}_R/R}(\Spec R) =  V^{\can}\otimes \calO_{\overline{\Sigma}_R}(\Spec R)dt.
	\]
	An analogous description holds for $\oSigma$ replaced with $\oSigma^*$. 
\end{proposition}

Given $\omega_\nabla \in V\otimes\Omega^1_{\overline{\Sigma}}$ and a coordinate $t \in \pbarO$ we will write $\omega_{\nabla}^t \in \Vcan \otimes \calO_{\overline{\Sigma}_R}$ for the element such that $\omega_{\nabla}^tdt = \omega_{\nabla}$ and $\omega_{\nabla}^{t,i} \in \pbarO \otimesl R$ to denote the $i$-th component of $\omega_{\nabla}$ with respect to the basis $x_i$.

\begin{remark}\label{rmk:art2changecoordinateoper}
	Under the assumption that $\overline{\Sigma}$ admits coordinates fix an oper $\nabla \in \Op_\gog(\overline{\Sigma})(R)$ and given two coordinates $t,s \in \overline{\Sigma}_R$ consider the two unique elements $\omega_\nabla^t,\omega_\nabla^s$ such that
	\[
		d + (f_0 + \omega_\nabla^t)dt \simeq \nabla \simeq d + (f_0 + \omega_\nabla^s)ds.
	\]
	Then a lengthy but simple computation shows that
	\begin{align*}
		\omega^{s,1}_\nabla &= (\partial_st)^{2}\omega^{t,1}_\nabla -\frac{1}{2}\{t,s\}, \\
		\omega^{s,j}_\nabla &= (\partial_st)^{d_j+1}\omega^{t,j}_\nabla \quad j > 1,
	\end{align*}
	where $\{t,s\} = \left( \frac{\partial_s^3t}{\partial_st}- \frac{3}{2}\left(\frac{\partial^2_st}{\partial_st}\right)^2\right)$ is the \textit{Schwarzian Derivative}.
\end{remark}

\begin{remark}[Factorization properties of the space of Opers]\label{rmk:factorizationopers}
    Recall the notation of Definition \ref{def:openclosedfactorization}: consider a family of sections $\Sigma:S \to X^J$, indexed by $J$ and a surjection $J \twoheadrightarrow I$ and the subschemes $i_{J/I} : V_{J/I} \to S, j_{J/I} : U_{J/I} \to S$. There are natural isomorphisms
    \[
        j^*_{J/I}\Op_{\gog}(\overline{\Sigma}_J^*) = \prod_{i \in I} \Op_{\gog}(\overline{\Sigma}_{J_i}^*), \qquad i_{J/I}^*\Op_{\gog}(\overline{\Sigma}_J^*) = \Op_{\gog}(\overline{\Sigma}_I^*),
    \]
    where $j^*_{J/I},i^*_{J/I}$ denote the pullback of spaces along the corresponding maps. The first one follows by the fact that on $U_{J/I}$ there is a natural identification $\overline{\Sigma}_J = \coprod_{i \in I} \overline{\Sigma}_{J_i}$. The second one comes from the fact that when restricted to $V_{J/I}$ our $J$ family of sections $\Sigma_J$ is the same as a collection of sections $\Sigma_I$.
	
	It follows that the collection $\Fun\left(\Op_{\gog}(\overline{\Sigma}_J^*)\right)$ is a complete topological factorization algebra, so that our data are equipped with canonical isomorphisms
    \begin{align*}
        \fact^{\Op}_{J/I} &: \hat{j}^*_{J/I} \bigotimes^!_{i \in I} \Fun\left( \Op_{\gog}(\overline{\Sigma}_{J_i}^*)\right) \to \hat{j}^*_{J/I}\Fun\left( \Op_{\gog}(\overline{\Sigma}_{J}^*)\right), \\
        \Ran^{\Op}_{J/I} &: \hat{i}^*_{J/I} \Fun\left( \Op_{\gog}(\overline{\Sigma}_{J}^*)\right) \to \Fun\left( \Op_{\gog}(\overline{\Sigma}_{I}^*)\right).
    \end{align*}
    Analogous properties hold for $\Fun\left( \Op_{\gog}(\overline{\Sigma}_{J})\right)$.
\end{remark}

\subsubsection{Description of \texorpdfstring{$\Fun(\Op_{\gog}(\oSigma^*))$}{the algebra of functions on Opers}}

In this section we focus on the case where $S$ is affine, well covered, with a fixed coordinate $t$ and use the description of $\Op_{\gog}(\oSigma^*)$ via canonical representatives to give a description of $\Fun\left(\Op_{\gog}(\oSigma^*) \right)$ as the completion of a polynomial algebra.

\smallskip

Let $V$ be a finite dimensional vector space, consider the functors
\begin{align*}
    J_{\oSigma} V &: \Aff^{\op}_S \to \Set \qquad R \mapsto V \otimes_{\mC} \calO_{\oSigma_R}(\Spec R), \\
    L_{\oSigma} V &: \Aff^{\op}_S \to \Set \qquad R \mapsto V \otimes_{\mC} \calO_{\oSigma^*_R}(\Spec R).
\end{align*}
Since $\pbarO (S)$ is topologically free, $J_{\oSigma} V$ is represented by an affine scheme, while $L_{\oSigma}V$ is an ind-affine scheme, there is a closed embedding $J_{\oSigma} V \hookrightarrow L_{\oSigma} V$ which induces a surjection $\Fun\left( L_{\oSigma} V \right) \twoheadrightarrow \Fun \left( J_{\oSigma} V \right)$.

\begin{remark}[Factorization properties of $J_{\oSigma},L_{\oSigma}$]\label{rmk:factpropertiesjsigmalsigma}
    It follows from the factorization properties of $\pbarO$ and $\pbarOpoli$ that for any finite dimensional vector space $V$, the spaces $J_{\oSigma} V,L_{\oSigma} V$ have natural factorization structures. The same goes for their sheaves of functions so that (with respect to the notation of Definition \ref{def:openclosedfactorization}), there are natural isomorphisms
    \begin{align*}
        \fact^{L_{\oSigma} V}_{J/I} &:\hat{j}^*_{J/I} \bigotimes_{i \in I}^! \Fun\left( L_{\oSigma_{J_i}} V \right) \to \hat{j}^*_{J/I}\Fun\left( L_{\oSigma_J} V \right), \\
        \Ran^{L_{\oSigma} V}_{J/I} &:\hat{i}^*_{J/I} \Fun\left( L_{\oSigma_J} V \right) \to \Fun\left( L_{\oSigma_I} V \right),
    \end{align*}
    which make the collection $\Fun\left( L_{\oSigma} V \right)$ into a complete topological factorization algebra.
\end{remark}

Given $g \in \pbarOpoli(S)dt$ and $h \in V^*$ we consider the function $h \otimes gdt : L_{\oSigma} V \to \mA^1$ defined by $v\otimes f \mapsto h(v)\Res_\Sigma(fgdt)$. This construction induces an injection
	\[
		V^*\otimes\pbarOpoli(S) \hookrightarrow \Fun(L_{\oSigma} V)
	\]
which, since $\Res_\Sigma : \pbarOpoli \times \pbarOpoli dt \to \calO_S$ is a perfect pairing (see \cite[Lemma \ref{lem:resnondegenere}]{casmaffei1}), induces an isomorphism of commutative complete topological sheaves of algebras 
\[ \Fun(L_{\oSigma} V) = \overline{\Sym}_{\lowOS}(V^*\otimes\pbarOpoli),
\]
where $\overline{\Sym}_{\lowOS}(V^*\otimes\pbarOpoli)$ is the completion of $\Sym_{\lowOS}(V^*\otimes\pbarOpoli)$ along the topology generated by the ideals $\left( V^*\otimes\pbarO(-n) \right)$. We apply this to the case of opers. Recall that by Proposition \ref{prop:art2descrlocopervcan} the choice of a coordinate $t \in \pbarO(S)$ induces an isomorphism $\chi_{\Sigma,t} : \Op_{\gog}(\overline{\Sigma}^*) \to L_{\oSigma}V^{\can}$. We will denote by 
\[
	\chi_{\Sigma,t}^* : \overline{\Sym}_{\lowOS}((V^{\can})^*\otimes\pbarOpoli) \to \Fun\left( \Op_{\gog}(\overline{\Sigma}^*) \right)
\]
the isomorphism on functions induced by $\chi_t$. The ideal generated by $(V^{\can})^*\otimes\pbarO$ (without poles) is sent exactly to the ideal defining $\Op_{\gog}(\overline{\Sigma}) \subset \Op_{\gog}(\oSigma^*)$, so that we get an analogous isomorphism
\[
	\chi_{\Sigma,t}^* : \Sym_{\lowOS}\left((V^{\can})^*\otimes\frac{\pbarOpoli}{\pbarO}\right) \to \Fun\left( \Op_{\gog}(\overline{\Sigma}) \right).
\]

\begin{remark}[Factorization properties for $\chi^*_{\Sigma,t}$]\label{rmk:factpropertieschi} Recall the notation of Definition \ref{def:openclosedfactorization} and fix a surjection of finite sets $J \twoheadrightarrow I$. Then the collection of isomorphisms $\chi^*_t$ make the following diagrams commute
    \[\begin{tikzcd}
    {\hat{j}_{J/I}^*\Fun\left(\Op_{\gog}(\overline{\Sigma}^*_J)\right)} &&& {\hat{j}_{J/I}^*\overline{\Sym}_{\lowOS}((V^{\can})^*\otimes\calO_{\overline{\Sigma}_J})} \\
	\\
	{\hat{j}_{J/I}^*\bigotimes^!_{i \in I} \Fun\left(\Op_{\gog}(\overline{\Sigma}_{J_i}^*)\right)} &&& {\hat{j}_{J/I}^*\bigotimes^!_{i \in I} \overline{\Sym}_{\lowOS}((V^{\can})^*\otimes\calO_{\overline{\Sigma}_{J_i}})}
	\arrow["{\hat{j}_{J/I}^*\chi^*_{\Sigma_J,t}}"', from=1-4, to=1-1]
	\arrow["{\fact^{\Op}_{J/I}}",from=3-1, to=1-1]
	\arrow["{\fact^{L_{\oSigma} V}_{J/I}}"', from=3-4, to=1-4]
	\arrow["{\hat{j}_{J/I}^*\prod_{i\in I} \chi^*_{\Sigma_{J_i},t_i}}"', from=3-4, to=3-1]
\end{tikzcd}\]

	\[\begin{tikzcd}
	{\hat{i}_{J/I}^*\Fun\left(\Op_{\gog}(\overline{\Sigma}^*_J)\right)} && {\hat{i}_{J/I}^*\overline{\Sym}_{\lowOS}((V^{\can})^*\otimes\calO_{\overline{\Sigma}_J})} \\
	\\
	{\Fun\left(\Op_{\gog}(\overline{\Sigma}_I^*)\right)} && {\overline{\Sym}_{\lowOS}((V^{\can})^*\otimes\calO_{\overline{\Sigma}_I})}
	\arrow["{\hat{i}_{J/I}^*\chi^*_{\Sigma_J,t}}"', from=1-3, to=1-1]
	\arrow["{\Ran^{\Op}_{J/I}}"', from=1-1, to=3-1]
	\arrow["{\Ran_{J/I}^{L_{\oSigma} V}}", from=1-3, to=3-3]
	\arrow["{\chi^*_{\Sigma_I,t_I}}"', from=3-3, to=3-1]
\end{tikzcd}\]
\end{remark}

\subsection{The factorization space \texorpdfstring{$\Op_{\gog}(D)_C$}{of opers on the disk}}

In this section, given $C$ a smooth curve over $\mC$, following the classical analogues of these constructions, we define the factorization spaces $\Op_{\gog}(D)_C$ and $\Op_{\gog}(D^*)_C$. Let $I$ be a finite set, we identify morphisms $\Spec R \to C^I$ with collections $\Sigma = \{ \sigma_i : \Spec R \to C_R \}$ of sections of $C_R \to \Spec R$. In particular we will use the notion of a $\gog$-oper on $\overline{\Sigma},\overline{\Sigma}^*$ given in Section \ref{sec:opersigma}. The point is that we need a notion of Opers on the formal neighbourhood of the scheme theoretic union of the sections of $\Sigma$ and on its pointed version, so that our formalization really comes in handy. It is not strictly necessary though: a possible way to avoid the usage of this language is to define $\Op_{\gog}(D)_C$ and $\Op_{\gog}(C)$ when $C$ is affine and equipped with a coordinate and then glue these constructions to get the correct spaces over $C$. 

\begin{definition}\label{def:opersCI}
	We define $\Op_{\gog}(D)_{C^I}$, $\Op_{\gog}(D^*)_{C^I}$ as a functor of $\mC$ algebras.
	\begin{align*}
		\Op_{\gog}(D)_{C^I}(R) &= \left\{ \left(\Sigma \in C^I(R), \nabla \in \Op_{\gog}\left(\overline{\Sigma}\right) \right) \right\}, \\
		\Op_{\gog}(D^*)_{C^I}(R) &= \left\{ \left(\Sigma \in C^I(R), \nabla \in \Op_{\gog}\left(\overline{\Sigma}^*\right)\right)\right\}.
	\end{align*}
	\index{$\Op_{\gog}(D)_{C^I}$}\index{$\Op_{\gog}(D^*)_{C^I}$}
\end{definition}

\begin{remark}\label{rmk:operCopersigma}
	We notice that these constructions are special cases of our functors $$\Op_{\gog}\left(\overline{\Sigma^{\univ}_{C,I}}\right),\Op_{\gog}\left(\overline{\Sigma^{\univ}_{C,I}}^*\right)$$ for the particular choice of $\Sigma^{\univ}_{C,I}$ introduced in Remark \ref{rmk:factsigmaugualefactcurva}.
	Let us recall that here: let $S = C^I$ and $X = C \times C^I$ then $X \to S$ is a smooth family of curves equipped with a canonical $I$-family of sections $\Sigma^{\univ}_{C,I}$. Namely, for any index $i \in I$, the $i$-th section of $\Sigma^{\univ}_{C,I}$ is given by 
	\[
		(\sigma^{\univ}_{C,I,i}) : C^I \to C \times C^I \qquad (x_{i'})_{i'\in I} \mapsto (x_i,(x_{i'})_{i'\in I}).
	\]
	As functors on  $\Aff_{C^I}$ we have 
$$
		\Op_{\gog}(D)_{C^I} = \Op_{\gog}\left(\overline{\Sigma^{\univ}_{C,I}}\right), \qquad 
		\Op_{\gog}(D^*)_{C^I} = \Op_{\gog}\left(\overline{\Sigma^{\univ}_{C^I}}^*\right).
$$	In particular, by the factorization properties of \ref{rmk:factorizationopers} these naturally define a factorization spaces on $C$ with respect to Definition \ref{def:factorizationspace}. Given any surjection of finite sets $J \twoheadrightarrow I$ we have pullback diagrams (we write down only the $D^*$ case, $D$ is completely analogous)
	\[\begin{tikzcd}
	   {\Op_{\gog}(D^*)_{C^I}} & {\Op_{\gog}(D^*)_{C^J}} & {\prod_{i \in I} \Op_{\gog}(D^*)_{C^{J_i}}} & {\Op_{\gog}(D^*)_{C^J}} \\
	   \\
	   {C^I} & {C^J} & {U_{J/I}} & {C^J}
	   \arrow[from=1-1, to=1-2]
	   \arrow[from=1-1, to=3-1]
	   \arrow["\lrcorner"{anchor=center, pos=0.125}, draw=none, from=1-1, to=3-2]
	   \arrow[from=1-2, to=3-2]
	   \arrow[from=1-3, to=3-3]
	   \arrow[from=1-4, to=3-4]
	   \arrow["\lrcorner"{anchor=center, pos=0.125}, draw=none, from=1-3, to=3-4]
	   \arrow[from=1-3, to=1-4]
	   \arrow["{\Delta_{J/I}}"', hook, from=3-1, to=3-2]
	   \arrow[hook, from=3-3, to=3-4]
   \end{tikzcd}\]
\end{remark}

\section{The center of \texorpdfstring{$\VkSg$}{the chiral algebra}}

In this section we give the definition of $\zeta(\calV)$, the center of a chiral algebra $\calV$ over $\oSigma$ and study it in the local case where $S$ is affine and well covered. We apply this to $\calV = \VkSg$ and show that its center has a natural factorization structure.

\begin{definition}
    Let $\calV$ be a chiral algebra over $\oSigma$ and let $\mu : \calV \exttensor{\fil} \calV(\infty\Delta) \to \Delta_!\calV$ be its chiral bracket. Given an open subset $U\subset S$, a section $z \in \calV(U)$ is \emph{central} if the induced morphism
    \[
        \mu(\_ \boxtimes z) : \calV|_U \to (\Delta_! \calV)|_U
    \]
    vanishes. Notice that by skew-symmetry this is equivalent to require that $\mu(z\boxtimes \_)$ vanishes. The \emph{center} of $\calV$ is defined as the presheaf $\zeta(\calV)(U) = \{ z \in \calV(U): z \text{ is central}\}$. A chiral algebra is said to be commutative if $\zeta(\calV)  =\calV$ or, equivalently, if $\mu : \calV \boxtimes^{\fil} \calV \to \Delta_!\calV$ vanishes. The center $\zeta(\calV)$ is naturally a $\pbarO$-module and can be shown to be a sheaf, as the following remark shows.
\end{definition}

\begin{remark}
    Let us first check that $\mu(\_\boxtimes z)$ is indeed well defined. Let $\calV$ be a chiral algebra over $\oSigma$ and $\calV(n)$ be its attached filtration. Recall that we are assuming that $\calV = \varinjlim_n \calV(n)$ as sheaves and that since this is a directed colimit we have that for any open subset $\calV(U) = \varinjlim_n \calV(n)(U)$. In particular any local section $z \in \calV(U)$ belongs to $\calV(n)(U)$ for some $n$. It follows that for every $m\geq n$ there is a well defined map $\mu(\_ \boxtimes z) : \calV(m)|_U \to (\Delta_! \calV)|_U$ and taking its colimit gives a well defined map $\mu(\_ \boxtimes z) : \calV|_U \to (\Delta_!\calV)|_U$. To show that $\zeta(\calV)$ is indeed a sheaf notice that for any $m \geq n$ the assignment $U \mapsto \{ z \in \calV(n)(U) : \mu(\_ \boxtimes z) : \calV(m)|_U \to (\Delta_!\calV)|_U \text{ is zero} \}$ determines a subsheaf $\zeta_{n,m}(\calV)\subset \calV(n)$. Then $\zeta(\calV) = \varinjlim_n (\cap_{m\geq n} \zeta_{n,m}(\calV))$ is also a sheaf. 
\end{remark}

\subsection{Local and global description}

Recall that by \cite[Section \ref{ssez:Otalgebre}]{casmaffei1} (see also Section \ref{ssec:richiamivertexalgoverost}), in the case where $S$ is affine and well covered, there is an equivalence between filtered \QCC chiral algebras and filtered complete vertex algebras over $(\pbarO(S),t)$, which is established by $\calV \mapsto \calV(S)$. Recall also that after a coordinate $t \in \pbarO(S)$ is chosen, the chiral product on $\calV$ relates to the vertex algebra structure on $\mV = \calV(S)$ via the following formula:
\[
    \mu(v\boxtimes w (t\otimes 1 - 1 \otimes t)^n) = \sum_{k \geq 0} \frac{1}{k!} v_{(n+k)}w\cdot \partial_{t\otimes 1}^k.
\]
It immediately follows that $\zeta(\calV)(S)$ identifies with $\zeta(\mV)$, the set of the central elements of the vertex algebra $\mV = \calV(S)$ (recall that the center of a vertex algebra $V$ is given by the set of those $v \in V$ such that $w_{(n)}v = 0$ for any $w \in V$, $n \geq 0$). This also shows that in the case where $\calV$ is a filtered \QCC chiral algebra then $\zeta(\calV)$ is also a filtered \QCC(commutative) chiral algebra.
\smallskip

As explained in \cite[Section \ref{rmk:extvoaoslin}]{casmaffei1} (see also Section \ref{ssec:richiamivertexalgoverost}) to any vertex algebra $V$ we can attach a vertex algebra over $(\pbarO(S),t)$ as follows: $\mV = V \otimesl \pbarO(S)$, $(v\otimes f)\cdot \partial_t = -Tv \otimes f - v \otimes(\partial_tf)$ and 
\[
    (v\otimes f)_{(n)}(w \otimes g) = \sum_{k \geq 0} \frac{1}{k!} v_{(n+k)}w \cdot (g\partial^k_tf).
\]
We have the following description of $\zeta(\mV)$.

\begin{proposition}\label{prop:descrcenterchiral}
    Assume that $S = \Spec A$ is affine and well covered with a coordinate $t \in \pbarO(S)$. Let $V$ be a vertex algebra over $\mC$ and let $\mV = V \otimesl \pbarO(S)$ be its attached vertex algebra over $(\pbarO(S),t)$. Then
    \[
        \zeta(\mV) = \zeta(V) \otimesl \pbarO(S),
    \]
    where $\zeta(V)$ is the center of the vertex algebra $V$.
\end{proposition}

\begin{proof}
    It is easy to check that $\zeta(V) \otimes \pbarO(S) \subset \zeta(\mV)$; indeed for any $v\otimes f \in \mV$, $z \otimes g \in \zeta(V)\otimes \pbarO(S)$, and any $n\geq 0$, we have $(v \otimes f)_{(n)}(z \otimes g) = \sum_{k \geq 0} \frac{1}{k!} v_{(n+k)}z\otimes g\partial_t^kf = 0$.  We just need to show that every central element $z \in \zeta(\mV)$ lies in $\zeta(V) \otimes \pbarO(S)$. Let $f_i \in \pbarO(S)$ be a $\mC$ basis, consider a central element $z \in \zeta(\mV)$ and write it as a finite sum
    \[
        z = \sum_{i} v_i \otimes f_i. 
    \]
    We want to show that $v_i \in \zeta(V)$ for all $i$. The condition of $z$ being central implies that $v_{(n)}z = 0$ for all $n \geq 0$ and $v \otimes 1 \in V\otimes 1 \subset \mV$. Applying the above formulas we get
    \[
        (v\otimes 1)_{(n)}z = \sum_i v_{(n)}v_i \otimes f_i = 0.
    \] 
    Since $f_i$ is a basis of $\pbarO(S)$ it follows that $v_{(n)}v_i = 0$ for any $i$, $v \in V$ and $n\geq 0$ as desired.
\end{proof}

We apply the above to give a local description of $\zeta(\VkSg)$. Recall that by \cite[Section \ref{ssec:localdescriptionofvksg}]{casmaffei1} (see also Section \ref{ssec:richiamiVkSg}), in the case $S = \Spec A$ is integral, affine and well covered with a given coordinate $t \in \pbarO(S)$ we constructed an isomorphism
\[
    \calY_{\Sigma,t} : V^\kappa(\gog) \otimes \pbarO \to \VkSg.
\]
This implies that $\VkSg(S)$ is isomorphic to the vertex algebra over $(\pbarO(S),t)$ attached to the vertex algebra $V^\kappa(\gog)$, in particular $\VkSg$ is a filtered \QCC chiral algebra and we may apply Proposition \ref{prop:descrcenterchiral} to obtain the following Corollary. Before stating it, let us introduce here a bit of notation: we write $\zeta^\kappa(\gog)$ for the center of the vertex algebra $V^\kappa(\gog)$ and $\zeta^\kappa_{\Sigma}(\gog)$ for the center of the chiral algebra $\VkSg$. 

\begin{corollary}\label{coro:subchiralalg}
	Assume that $S$ is integral, affine and well covered. Then the map $\calY_{\Sigma,t}$ establishes a topological isomorphism
	\[
	\calY_{\Sigma,t}  : \zeta^\kappa(\gog)\otimes\pbarO(S) \to \zeta^\kappa_{\Sigma}(\gog)(S). 
	\]
\end{corollary}

\begin{remark}[Global Description]
    From this description it also follows a global description of $\zeta^\kappa_\Sigma(\gog)$ in the case where $\gog$ is simple and $\kappa = \kappa_c$ is the critical level just as in \cite[Section \ref{sec:identificationopers1}]{casmaffei1}. The proof goes on exactly as in \emph{loc. cit.}; we won't repeat it here since we will not need this result in the sequel and we just mention it for completeness. Before stating it let us recall some of the constructions of \emph{loc. cit.}. Recall that by \cite[Proposition \ref{prop:twisttanchiralcommutative}]{casmaffei1} the assignment $\calV \mapsto \calV \otimes_{\pbarO} T_{\oSigma}$ establishes an equivalence between filtered \QCC commutative chiral algebras and filtered \QCC $\pbarD$-commutative algebras. Recall also the construction of the sheaf $\Gamma_{\oSigma}(V)$ for a given $\Aut^0 O$-module $V$ of \cite[Section \ref{ssec:gammasigmav}]{casmaffei1} and that in the case $V$ is also an $\Aut O$-commutative algebra, then the sheaf $\Gamma_{\oSigma}(V)$ is naturally a \QCC filtered $\pbarD$-commutative algebra. We will consider this construction for the $\Aut O$ commutative algebra $\mC[\Op_{\gogl}(D)]$. 
    \smallskip
    
    Then, if we assume that $\gog$ is a simple Lie algebra, it follows by Theorem \ref{thm:globaldescrchiralalg} of \cite{casmaffei1} that there is a canonical isomorphism of commutative $\pbarD$-algebras
    \[
        \zeta^{\kappa_c}_{\Sigma}(\gog) \otimes_{\pbarO} T_{\oSigma} = \Gamma_{\oSigma}(\mC[\Op_{\gogl}(D)]).
    \]
\end{remark}

\subsection{Factorization}

We apply the local description of Corollary \ref{coro:subchiralalg} to establish some factorization properties of the center $\zeta^\kappa_{\Sigma}(\gog) = \zeta(\VkSg)$.

\begin{corollary}
    Assum that $S$ is integral, quasi-separated and quasi-compact. Then the factorization structure on $\VkSg$ of \cite[Proposition \ref{prop:factpropertieschiralalg}]{casmaffei1} induces isomorphisms of chiral algebras
    \begin{align*}
			\Ran^{\zeta}_{J/I} &: \hat{i}_{J/I}^*\zeta^{\kappa}_{\Sigma_J}(\gog) \to \zeta^{\kappa}_{\Sigma_I}(\gog), \\ \fact^{\zeta}_{J/I} &: \hat{j}_{J/I}^*\left( \prod_{i\in I} \zeta^{\kappa}_{\Sigma_{J_i}}(\gog) \right) \to \hat{j}_{J/I}^*\zeta^{\kappa}_{\Sigma_J}(\gog)
	\end{align*}
	which make $\zeta^\kappa_{\Sigma}(\gog)$ a complete topological factorization algebra with respect to $\prod$.
\end{corollary}

\begin{proof}
    The assertion may be checked locally, so that we may assume that $S$ is affine and well covered. By Theorem \ref{thm:factorizationofcaly} of \cite{casmaffei1}, when $S$ is affine and well covered the isomorphism $\calY_{\Sigma,t}$ intertwines with the factorization structure of $\pbarO$ and that of $\VkSg$ (see Section \ref{ssec:richiamifactchiral}), so that the isomorphisms $\Ran^{V^\kappa}_{J/I}, \fact^{V^\kappa}_{J/I}$, identify, via $\calY_{\Sigma,t}$, with
    \begin{align*}
		\id \otimes\Ran^{\pbarO}_{J/I} &: V^\kappa(\gog)\otimes\hat{i}_{J/I}^*\calO_{\oSigma_J}, \to V^\kappa(\gog)\otimes\calO_{\oSigma_I} \\ \id\otimes\fact^{\pbarO}_{J/I} &: V^\kappa(\gog)\otimes\hat{j}_{J/I}^*\left( \prod_{i\in I} \calO_{\oSigma_{J_i}} \right) \to  V^\kappa(\gog)\otimes\hat{j}_{J/I}^*\calO_{\oSigma_J}
	\end{align*}
    It is then evident that, by the local description of Corollary \ref{coro:subchiralalg} that these restrict to morphisms $\Ran^\zeta_{J/I},\fact^\zeta_{J/I}$ as in the statement of the corollary and that these restrictions are isomorphisms.
\end{proof}

By the discussion of Section \ref{ssec:factorizationU} the factorization structure on $\zeta^\kappa_\Sigma(\gog)$ induces a factorization structure on its completed enveloping algebra.

\begin{corollary}
    The isomorphisms $\Ran^{\calU(V)}_{J/I}, \fact^{\calU}_{J/I}$ of Proposition \ref{prop:factpropertiescalu} induce isomorphisms 
    \begin{align*}
    \Ran^{\calU(\zeta)}_{J/I} &: \hat{i}_{J/I}^*\left(\calU_{\overline{\Sigma}_{J}^*}(\zeta^{\kappa}_{\Sigma_{J}}(\gog))\right) \to \calU_{\overline{\Sigma}_{I}^*}(\zeta^{\kappa}_{\Sigma_{I}}(\gog)) \\
		\fact^{\calU (\zeta)}_{J/I} &: \hat{j}_{J/I}^*\left( \bigotimes^!_{i\in I} \calU_{\overline{\Sigma}_{J_i}^*}(\zeta^{\kappa}_{\Sigma_{J_i}}(\gog)) \right) \to \hat{j}_{J/I}^*\left(\calU_{\overline{\Sigma}_{J}^*}(\zeta^{\kappa}_{\Sigma_{J}}(\gog))\right)
		\end{align*}
		which make $\calU_{\overline{\Sigma}^*}\left( \zeta^{\kappa}_{\Sigma}(\gog)\right)$ into a complete topological factorization algebra.
\end{corollary}

\section{Identification with the algebra of function on opers}\label{sec:identificationopers2}

In what follows we consider $\gog$, a simple finite Lie algebra and $\gogl$, its Langlands dual Lie algebra. The goal of this section is to prove Theorem \ref{thm:teofinale1}, which establishes a canonical isomorphism between the center $\Zcrit = Z(\ugsig{\kappa_c})$ (we will only work at the critical level in this section) \index{$\Zcrit$} of the completed enveloping algebra $\ugsig{\kappa_c}$ at the critical level and $\Fun\left(\Op_{\gogl}(\overline{\Sigma}^*)\right)$, the complete topological algebra of functions on the space of $\gogl$-opers on $\overline{\Sigma}^*$. This isomorphism will be compatible with the factorization structures on both spaces and in the case when $\Sigma$ consists of a single section it will recover the usual Feigin-Frenkel isomorphism.

\subsection{The center and its factorization properties}
\label{ssec:factorizationcenter}
Let us start by introducing the center of a complete sheaf of algebras and prove some of its basic properties. We will deduce some factorization properties of $Z_{\kappa}(\hat{\gog}_\Sigma) = Z(\ugsig{\kappa})$ from those of $\ugsig{\kappa_c}$ as well. \index{$\ugsig{\kappa}$:$Z_{\kappa}(\hat{\gog}_\Sigma)$}

\begin{definition}\label{def:center}
    Let $\calA$ be a sheaf of associative algebra on $S$ for which its product $m$ is continuous in both variables. Given a local section $a \in \calA(U)$ we consider $[a,\_] : \calA_{|U} \to \calA_{|U}$ to be the continuous morphism determined by $m(a\otimes\_) - m(\_\otimes a)$. We define $Z(A)$, the \emph{center} of $\calA$, as the following subsheaf of $\calA$:
    \[
        Z(\calA)(U) = \left\{ z \in \calA(U) \text{ such that } [z,\_] = 0 \right\}
    \]
    where by $[z,\cdot]=0$ we mean that the map $\calA|_U\lra \calA|_U$ given by the bracket with $z$ is zero. 
\end{definition}

\begin{remark}
    If $\calA$ is a complete sheaf of associative algebras it is clear that also $Z(\calA)$ with the induced topology is a complete sheaf.  

    In general, $Z(\calA)(U)$ is smaller than $Z(\calA(U))$ and $U\mapsto Z(\calA(U))$ does not even need to define a presheaf, since restriction does not need to be well defined. However if we assume that $\calA$ is QCC, then over all affine subsets $U$ we have  $Z(\calA)(U)=Z(\calA(U))$. 
    
    If we assume in addition that the topology of $\calA$ is generated by left ideals then 
    the topology on $Z(\calA)$ consists of two sided ideals, so that $Z(\calA)$ is naturally a $\otimes^!$-topological algebra. 
    
    Finally we notice that even with very strong hypotheses on $\calA$ and $S$ (for example $\calA$ QCCF and $S$ integral and noetherian) the sheaf $Z(\calA)$ does not need to be a QCC sheaf. 
\end{remark}

\begin{remark}\label{rmk:centerandpullback}
    Assume $\calA$ is a QCC associative algebra on $S$ whose product is continuous in both variables. Let $f : S' \to S$ be a morphism of schemes and denote by $\calA'$ the pull back of $\calA$. Then the following hold
\begin{enumerate}
	\item The map $\hat{f}^*Z(\calA) \to \calA'$ naturally factors through $Z(\hat\calA')$;
	\item In the case where $f$ is an open immersion, the map $\hat{f}^*Z(\calA) \to Z(\calA')$ is an isomorphism.
\end{enumerate}
	Notice that this does not contradict the previous Remark on $Z(\calA)$ not being a \QCC sheaf, since we are dealing with the sheaf theoretic center, instead of the center of sections.
\end{remark}

\begin{proof}
    Point $(1)$ follows by the fact that (before taking the completion) the map $\calO_T\otimes_{f^{-1}\calO_S} f^{-1}Z(\calA) \to \calA'$ naturally factors through the center $Z(\calA')$ and by the fact that $Z(\calA')$ is complete. Point $(2)$ follows by the fact that if $f$ is an open immersion and we are dealing with complete sheaves the completed pullback along an open immersion $U \to S$ coincides with restricting sheaves to $U$ (and this preserve completeness).
\end{proof}

We state the following easy Remark. Recall that given QCCF, $\otimesr$ associative algebras $\calA$,$\calB$, their product $\calA \otimes^!\calB$ is naturally a $\otimesr$ associative algebra by \cite[Remark \ref{prop:otimesshcontrootimesr}]{casmaffei1}.

\begin{remark}\label{rmk:morphismcenterproduct}
    Let $\calA$ and $\calB$ be QCCF, $\otimesr$ associative algebras on $S$. Then there exists a natural continuous morphism
    \[
        Z(\calA)\otimessh Z(\calB) \to Z(\calA\otimessh\calB).
    \]
\end{remark}

\begin{proof}
    There is a natural continuous morphism $Z(\calA)\otimes^! Z(\calB) \to \calA\otimes^!\calB$ induced by functoriality of $\otimes^!$. The image of $Z(\calA)\otimes Z(\calB)$ is easily shown to be contained in $Z(\calA\otimes^!\calB)$, so that the result follows by the fact that $Z(\calA\otimes^!\calB)$ is complete.
\end{proof}

\begin{proposition}\label{prop:factorizationcenter}
    Recall Definitions \ref{def:openclosedfactorization} and \ref{def:factorizationcompletesheafsigma} and fix a surjection of finite sets $J \twoheadrightarrow I$. Then the factorization structure of $\ugsig{\kappa}$ from \cite[Proposition \ref{prop:factpropertiesgogcalugog}]{casmaffei1} induces morphisms
    \begin{align*}
        \Ran^Z_{J/I} &: \hat{i}^*_{J/I} Z_\kappa(\hat{\gog}_{\Sigma_J}) \to Z_\kappa(\hat{\gog}_{\Sigma_I}), \\ \fact^Z_{J/I} &: \hat{j}^*_{J/I} \bigotimes^!_{i \in I} Z_\kappa(\hat{\gog}_{\Sigma_{J_i}}) \to \hat{j}^*_{J/I}Z_\kappa(\hat{\gog}_{\Sigma_J})
	\end{align*}
    which make $Z_{\kappa}(\hat{\gog}_{\Sigma})$ into a complete topological pseudo factorization algebra.
\end{proposition}

\begin{proof}
    The construction of both maps uses Remark \ref{rmk:centerandpullback}[1]. This already allows us to construct $\Ran$, using the factorization structure of $\ugsig{\kappa}$. To construct $\fact$ we use also \ref{rmk:centerandpullback}[2], the fact that $\hat{j}^*_{J/I}$ commutes with $ \otimes^!$, together Remark \ref{rmk:morphismcenterproduct} to get
    \[\begin{tikzcd}
        \bigotimes^!_{i \in I} \hat{j}^*_{J/I}Z_\kappa(\hat{\gog}_{\Sigma_{J_i}}) = \bigotimes^!_{i \in I} Z\left(\hat{j}^*_{J/I}\calU_\kappa(\hat{\gog}_{\Sigma_J})\right) \arrow{d}{} \\ Z\left( \bigotimes^!_{i \in I} \hat{j}^*_{J/I} \calU_\kappa(\hat{\gog}_{\Sigma_J})\right) \arrow{r}{\fact^{\calU(\gog)}_{J/I}} & Z\left( \hat{j}^*_{J/I} \calU_\kappa(\hat{\gog}_{\Sigma_J})\right) = \hat{j}^*_{J/I} Z_\kappa(\hat{\gog}_{\Sigma_J}).\end{tikzcd}
    \]
\end{proof}

We will need the following technical Lemma as well.

\begin{lemma}\label{lem:isoprodcentro}
	Let $\calB_1$, $\calB_2$ be \QCC sheaves, which are $\otimesr$ associative algebras. Assume that $Z(\calB_1)$ and $Z(\calB_2)$ are also QCC sheaves. 
	Assume that $\calB_2$ is locally \QCCF and that $Z(\calB_1)$ is locally \QCCF. Then the natural map of Remark \ref{rmk:morphismcenterproduct}
	\[
	Z(\calB_1) \otimessh Z(\calB_2) \to Z(\calB_1\otimessh \calB_2)  
	\]
	is a bicontinuous isomorphism. In particular $Z(\calB_1 \otimes^! \calB_2)$ is also a \QCC sheaf. 
\end{lemma}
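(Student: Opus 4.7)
The plan is to reduce to the local case and exploit topological bases coming from the QCCF hypothesis. Work locally on an open subset where both $\calB_2$ and $Z(\calB_1)$ admit topological bases as QCCF sheaves, say $(e_\alpha)_{\alpha \in A}$ for $\calB_2$ and $(f_\beta)_{\beta \in B}$ for $Z(\calB_1)$. The closed inclusions $Z(\calB_i) \hookrightarrow \calB_i$ induce, via functoriality of $\otimes^!$, a natural morphism $\iota : Z(\calB_1) \otimes^! Z(\calB_2) \to \calB_1 \otimes^! \calB_2$, and Remark \ref{rmk:morphismcenterproduct} ensures that the image lies inside $Z(\calB_1 \otimes^! \calB_2)$. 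This is the map we want to prove is a bicontinuous isomorphism.

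Injectivity is the easy direction: the map $Z(\calB_1) \hookrightarrow \calB_1$ admits a closed complement locally (since $Z(\calB_1)$ is QCCF and one may complete the basis $(f_\beta)$ to a topological basis of $\calB_1$), so $\iota$ is an injection with topologically split image. For surjectivity, let $z \in Z(\calB_1 \otimes^! \calB_2)$. Using the topological basis $(e_\alpha)$ of $\calB_2$ one may uniquely write $z = \sum_\alpha b_\alpha \otimes e_\alpha$ with $(b_\alpha) \in \calB_1$ a summable family. Imposing $[b'\otimes 1, z] = 0$ for every local section $b' \in \calB_1$ yields $\sum_\alpha [b', b_\alpha]\otimes e_\alpha = 0$, and the uniqueness of the expansion in $(e_\alpha)$ forces $[b', b_\alpha]=0$ for each $\alpha$, i.e.\ $b_\alpha \in Z(\calB_1)$. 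Now rewrite $z$ using the topological basis $(f_\beta)$ of $Z(\calB_1)$, obtaining $z = \sum_\beta f_\beta \otimes e'_\beta$ with $e'_\beta \in \calB_2$. Imposing $[1 \otimes b', z]=0$ and using the analogous uniqueness for $(f_\beta)$ (which is a topological basis of a QCCF direct summand of $\calB_1$) gives $[b', e'_\beta]=0$ for all $b'$, hence $e'_\beta \in Z(\calB_2)$. Therefore $z \in Z(\calB_1)\otimes^! Z(\calB_2)$.

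Bicontinuity is then automatic: both the source and the target inherit their topology from the filtration defined by the basis pairs $(f_\beta, e_\alpha)$ under the $\otimes^!$ construction, and the constructed inverse respects these filtrations. Glueing the local isomorphisms, which is possible since the morphism $\iota$ was canonical, yields the global statement, and the final assertion that $Z(\calB_1 \otimes^! \calB_2)$ is QCC follows because $Z(\calB_1) \otimes^! Z(\calB_2)$ is QCC as the completed tensor product of two QCC sheaves.

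The main obstacle is the uniqueness-of-expansion step, which requires that if $(f_\beta)$ is a topological basis of the closed subsheaf $Z(\calB_1) \subset \calB_1$ and $\sum_\beta f_\beta \otimes x_\beta = 0$ in $\calB_1 \otimes^! \calB_2$ then each $x_\beta$ vanishes. This is where the QCCF hypothesis on $Z(\calB_1)$ is crucial: it permits extending $(f_\beta)$ to a topological basis of $\calB_1$ so that the expansion in $\calB_1 \otimes^! \calB_2$ (itself QCCF locally, thanks to the QCCF hypothesis on $\calB_2$) is unique. Without this direct-summand-type property, the splitting cannot be performed and the argument breaks down; verifying this topological splitting carefully is the delicate bookkeeping at the heart of the proof.
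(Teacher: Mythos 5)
Your overall strategy — expand $z$ using a topological basis of the QCCF factor, use the centrality condition to force the coefficients into the center, then repeat with the roles swapped — is the same two-pass structure the paper uses. But there is a genuine gap in your execution: to make the ``uniqueness of expansion'' argument work you claim that the topological basis $(f_\beta)$ of $Z(\calB_1)$ can be completed to a topological basis of $\calB_1$, and you explicitly say this is crucial. That requires $\calB_1$ itself to be locally QCCF, but the hypotheses only give you that $\calB_1$ is QCC; there is no assumption that $\calB_1$ is topologically free, and in general a QCC sheaf need not admit any topological basis extending that of a closed QCCF subsheaf. The paper is careful to avoid ever splitting $Z(\calB_1)$ inside $\calB_1$: the injectivity of $Z_1\otimes^! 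Z_2\hookrightarrow B_1\otimes^! B_2$ is obtained from the general fact that if $M$ is QCCF and $P\subset Q$ is a closed inclusion of complete modules then $M\otimes^! P\to M\otimes^! Q$ is a closed injection --- one applies this first with $M=Z_1$ QCCF, $P=Z_2\subset Q=B_2$, and then with $M=B_2$ QCCF, $P=Z_1\subset Q=B_1$. No topological complement of $Z_1$ inside $B_1$ is ever needed.

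There is a second, related soft spot: once you have shown that each coefficient $b_\alpha$ lies in $Z(\calB_1)$, you conclude $z\in Z(\calB_1)\otimes^! \calB_2$ without argument. This step is not automatic because the sum $\sum_\alpha b_\alpha\otimes e_\alpha$ is a limit, and one must show that $z$ lies in the closure of $Z_1\otimes B_2$ inside $B_1\otimes^! B_2$. The paper handles this with an explicit closure argument: assuming $z\notin Z_1\otimes^! B_2$, one produces a basic open neighborhood $W=B_1\otimes^! V_2 + V_1\otimes^! B_2$ of $Z_1\otimes^! B_2$ with $z\notin W$, and derives a contradiction by splitting off a finite cofree piece $F_2$ of $B_2$ and observing $z\in B_1\otimes^! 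V_2\oplus Z_1\otimes^! F_2\subset W$. You should either incorporate such a density argument or find another route; as written, the passage from pointwise membership of the coefficients to membership of $z$ in the completed subproduct is unjustified.
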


\begin{proof}
    It is enough to prove that 
$$
Z(\calB_1)(U) \otimessh Z(\calB_2)(U) \simeq Z(\calB_1\otimessh \calB_2)(U) 
$$
for $U$ an open affine subset such that $Z(\calB_1)(U)$ and $\calB_2(U)$ are \QCCF modules. 
Denote by $B_1$, $B_2$. $Z_1$ and $Z_2$ the sections over $U$ of the two algebras and of the
two centers. 

Notice first that, in general, if $M$ is a \QCCF module and $P\subset Q$ is a closed submodule of a complete module $Q$ then the map $M\otimes^! P\lra M\otimes^! Q$ is injective and $M\otimes^! P$ has the induced topology. This implies that we have inclusions
$$
Z_1\otimes^! Z_2 \subset Z_1 \otimes^! B_2 \subset B_1 \otimes^! B_2.
$$
It is also clear that the image of $Z_1 \otimes^! Z_2$ is contained in $Z(B_1\otimes^! B_2)$. Hence we need to prove that $Z(B_1\otimes^! B_2)\subset Z_1 \otimes^! Z_2$. 

Let $z\in Z(B_1\otimes^! B_2)$. We prove first that $z\in  Z_1\otimes^! B_2$. Let $V_2$ be a cofree neighborhood of zero of $B_2$, let $F_2$ be a complement (which is therefore discrete) and \{$e_i\}_{i \in I}$ be a basis of $F_2$. 
We have $B_1\otimes^! B_2 = B_1\otimes^! V_2 \oplus B_1\otimes^! F_2$ and write $z=x+y$ with $x\in B_1\otimes^! V_2$ and $y\in B_1\otimes^! F_2$. Notice that 
$$
B_1\otimes^! F_2 = \limpro \frac{B_1\otimes F_2}{V_1\otimes F_2}=
\limpro \frac{B_1}{V_1} \otimes F_2 .
$$
This implies that there exists uniquely determined $y_i\in B_1$ such that 
\begin{enumerate}[\indent i)]
	\item for all neighborhoods of zero $V_1 \subset B_1$ we have $y_i\in V_1$ for all but finitely many $i$;
	\item for all neighborhoods of zero $V_1 \subset B_1$ we have $y\in \sum y_i\otimes e_i + V_1\otimes^! F_2$.
\end{enumerate}
For any $b\in B_1$, taking the commutator with $b\otimes 1$ preserves the decomposition $ B_1\otimes^! V_2 \oplus B_1\otimes^! F_2$, it follows that $b\otimes 1$ commutes with $y$ and we deduce $y_i\in Z_1$ for all $i$. Hence $z\in B_1\otimes^! V_2 \oplus Z_1\otimes^! F_2$. 

This implies that $z\in Z_1 \otimes^! B_2$. Indeed assume it is not, then, since $Z_1 \otimes^! B_2$ is closed (and recall this is equivalent to $Z_1 \otimes^!B_2 = \cap_{Z_1\otimes^! B_2 \subset V} V$ where $V$ ranges over neighborhoods of zero) there exist a neighborhood of zero $V_1$ of $B_1$ and a cofree neighborhood of zero $V_2$ of $B_2$  such that $z\notin W=B_1\otimes^! V_2 + V_1\otimes^! B_2$ and $W\supset Z_1\otimes B_2$. Let $F_2$ be a complement of $V_2$ as above, then from  $W\supset Z_1\otimes B_2 \supset Z_1\otimes F_2$ we deduce $W\supset B_1\otimes^! V_2 \oplus Z_1\otimes^! F_2$ against the fact that $z\notin W$ and $z\in  B_1\otimes^! V_2 \oplus Z_1\otimes^! F_2$.

This proves that $z\in Z_1\otimes^! B_2$ and in particular $z \in Z(Z_1\otimes^! B_2)$. We now apply the same argument to prove $z\in Z_1\otimes^! Z_2$.
\end{proof}

\subsubsection{The morphism \texorpdfstring{$\Phi^\zeta_\Sigma$}{between the enveloping algebra and the center}}

Having established the main properties of the center of the enveloping algebra $Z_\kappa(\hat{\gog_\Sigma}) = Z(\calU_\kappa(\hat{\gog}_\Sigma))$ and of the chiral algebra $\zeta^\kappa_\Sigma(\gog)$ we can construct a morphism
\[
	\Phi^\zeta_\Sigma : \calU_{\overline{\Sigma}^*}(\zeta^\kappa_\Sigma(\gog)) \to Z_\kappa(\hat{\gog}_\Sigma).
\]

\begin{corollary}\label{coro:morphismbtwcenter}[Of Proposition \ref{prop:factpropertiescalu}]
	The morphism $\Phi_\Sigma : \calU_{\oSigma^*}(\VkSg) \to \calU_\kappa(\hat{\gog}_\Sigma)$ of Proposition \ref{prop:descrenvelopingalg}, when restricted to $\calU_{\oSigma^*}(\zeta^\kappa_\Sigma(\gog))$ induces a continuous morphism
	\[
	\Phi^\zeta_\Sigma : \calU_{\overline{\Sigma}^*}(\zeta^\kappa_\Sigma(\gog)) \to Z_\kappa(\hat{\gog}_\Sigma).
\]
	The collection of the morphisms $\Phi^\zeta_\Sigma$ is compatible with the pseudo-factorization structures on both spaces.
\end{corollary}

\begin{proof}
	Since $Z_{\kappa}(\hat{\gog}_\Sigma)$ is a complete sheaf of algebras, we just need to check that the natural map $\Lie_{\overline{\Sigma}^*}(\zeta^{\kappa_c}_{\Sigma}(\gog)) \to \calU_{\kappa_c}(\hat{\gog}_{\Sigma})$ takes values inside the center, or equivalently that commutes with $X \otimes f$ for any $X\in \gog$ and $f \in \pbarOpoli$. Let us recall that the above morphism, interpreting elements $z \in \zeta^\kappa_\Sigma(\gog)_{\oSigma^*}$ as fields, is induced by $z \mapsto z(1)$. Then, the condition that $z(1)$ commutes with $X \otimes f$ for any $f \in \pbarOpoli, X \in \gog$, follows by the fact that given $z \in \left(\zeta^{\kappa_c}_{\Sigma}(\gog)\right)_{\overline{\Sigma}^*}$ and $X \in \gog \subset \calV^{\kappa_c}_\Sigma(\gog)$ we have that, $z$ being central, the bracket as fields
	\[
	[z,X] = \mu(z\boxtimes X) = 0.
	\]
	In particular for any $f \in \pbarOpoli$ we have $[z(1),X\otimes f] = [z,X](1\otimes f)= 0$.

	The fact that $\Phi^\zeta_\Sigma$ preserves the (pseudo) factorization structures follows by the fact that the natural map $\calU_{\oSigma^*}(\zeta^\kappa_\Sigma(\gog)) \to \calU_{\oSigma^*}(\VkSg)$ is of complete factorization algebras, by the fact that the pseudo factorization structure on $Z_\kappa(\hat{\gog}_\Sigma)$ is induced by the factorization structure of $\calU_\kappa(\hat{\gog}_\Sigma)$, and by the fact that $\Phi_{\Sigma}$ preserves the factorization structures (see Proposition \ref{prop:factpropertiescalu}).
\end{proof}

\subsection{The center in the case of a single section}

With not too much effort it is possible to upgrade the Feigin-Frenkel Theorem (c.f. Theorem \ref{thm:knownfeiginfrenkel}) to our geometric setting, in the case when $\Sigma = \{\sigma\}$ consists of a single section. In this section we give a sketch on how to do it, but for brevity we decided to move the heavy work to Appendix \ref{sec:feiginfrenkelclassical}, where we also compare Feigin and Frenkel's construction to ours. The reader can therefore limit herself to read the statement of the following Propositions as enhancements of the usual Feigin-Frenkel isomorphism. The reader who wants to dive into the proofs, should first take a look at Appendix \ref{sec:feiginfrenkelclassical}.

The main ingredient in order to prove the desired enhancement of the Feigin-Frenkel isomorphism is the construction $\calU_{\overline{\sigma}^*}(\zeta^{\kappa_c}_\sigma(\gog))$. The proof then follows two main steps:

\begin{enumerate}
	\item There is a canonical isomorphism
	\(
		\gamma_{\sigma} : \calU_{\overline{\sigma}^*}\left(\zeta^{\kappa_c}_\sigma(\gog)\right) \to \Fun(\Op_{\gogl}(\overline{\sigma}^*));
	\)
	\item The morphism constructed in Corollary \ref{coro:morphismbtwcenter}, $ \Phi^\zeta_{\sigma} : \calU_{\overline{\sigma}^*}\left(\zeta^{\kappa_c}_\sigma(\gog)\right) \to Z_{\kappa_c}(\hat{\gog}_\sigma) = Z(\calU_{\kappa_c}(\hat{\gog}_{\sigma}))$ is an isomorphism.
\end{enumerate}

We will deal with them separately, starting with $\gamma_{\sigma}$. We look first at the case where $S$ is affine, well covered and with a coordinate $t$. The idea to construct $\gamma_\sigma$ locally is simple: the complete topological algebras in question can be shown to have the same topological bases. The way we write these topological bases is via suitable isomorphism with the same completed symmetric algebra (c.f. the isomorphisms $\chi^*_{\sigma,t},\chi^*_{\Sigma,U}$).

We will consider $\gog$ and $\gogl$ at the same time and refer to the constructions of Section \ref{ssec:canonicalrepresentativesopers}; we denote by $\check{V}^{\can}$ the analogue of the vector space $V^{\can}$ relative to the Lie algebra $\gogl$, and consider the isomorphism 

\[
	\chi^*_{\sigma,t} : \overline{\Sym}_{\lowOS}((\check{V}^{\can})^*\otimes\calO_{\overline{\sigma}^*}) \to \Fun\left( \Op_{\gogl}(\overline{\sigma}^*) \right).
\]
induced by the description of opers via canonical representatives.

Let us take a look for a moment at the commutative algebra $\mC[\Op_{\gog}(D)]$, which is described via the above constructions in the case where $S = \Spec \mC$, $X = \Spec \mC[z]$ and $\sigma = \sigma_0$ is the $0$ section; when looking at this particular case we will always think of it with a specified coordinate $z$. Via the isomorphism $\chi_{\sigma_0,z} : \Sym(\check{V}^{\can}\otimes \mC((z))/\mC[[z]]) \to \mC[\Op_{\gogl}(D)]$ we get an injection $ (\check{V}^{\can})^* \hookrightarrow \mC[\Op_{\gogl}(D)]$ by considering the restriction of $\chi^*_{\sigma_0,z}$ to $(\check{V}^{\can})^* \otimes 1/z$. This immersion identifies $\mC[\Op_{\gogl}(D)]$ with the universal vertex algebra $V^0((\check{V}^{\can})^*)$ relative to the commutative Lie algebra $(\check{V}^{\can})^*$ at level $0$.

It then follows by Proposition \ref{prop:descrenvelopingalg} that the composition 
\[\begin{tikzcd}[column sep=tiny]
	{(\check{V}^{\can})^*\otimes\calO_{\overline{\sigma}^*}} & {\mC[\Op_{\gogl}(D)] \otimes \calO_{\overline{\sigma}^*}} & {\Lie_{\overline{\sigma}^*}\left(\mC[\Op_{\gogl}(D)] \otimes \calO_{\overline{\sigma}}\right) } & {\calU_{\overline{\sigma}^*}\left( \mC[\Op_{\gogl}(D)]\otimes\calO_{\overline{\sigma}}\right)}
	\arrow[from=1-1, to=1-2]
	\arrow[from=1-2, to=1-3]
	\arrow[from=1-3, to=1-4]
\end{tikzcd}\]
induces an isomorphism
\[
		\chi^*_{\Sigma,U} : \overline{\Sym}_{\lowOS}\left((\check{V}^{\can})^*\otimes\calO_{\overline{\sigma}^*}\right) \to \calU_{\overline{\sigma}^*}\left( \mC[\Op_{\gogl}(D)]\otimes\calO_{\overline{\sigma}}\right).
\]

\begin{proposition}\label{coro:descralphacenterv2}
		In the case where $S = \Spec A$ is affine, noetherian and equipped with a coordinate $t$, the isomorphism $\gamma_{\sigma}$ which makes the following diagram commute
		\begin{equation}\label{eq:diagramgammaonesection}
			\begin{tikzcd}
				{\calU_{\overline{\sigma}^*}(\zeta^{\kappa_c}_{\sigma}({\gog}))} && {\Fun\left( \Op_{\gogl}(\overline{\sigma}^*) \right)} \\
				\\
				{\calU_{\overline{\sigma}^*}\left(\mC[\Op_{\gogl}(D)]\otimes\calO_{\overline{\sigma}}\right)} && {\overline{\Sym}_{\lowOS}\left( (\check{V}^{\can})^* \otimes\calO_{\overline{\sigma}^*} \right)}
				\arrow["\gamma_{\sigma}", from=1-1, to=1-3]
				\arrow["{\calU_{\overline{\sigma}^*}(\calY_t)}", from=3-1, to=1-1]
				\arrow["{\chi_{\sigma,t}^*}"', from=3-3, to=1-3]
					\arrow["{\chi^*_{\sigma,U}}", from=3-3, to=3-1]
			\end{tikzcd}
		\end{equation}
		is independent from the choice of the coordinate $t$. In particular for an arbitrary quasi separated and noetherian $S$ there is a canonical isomorphism $\gamma_\sigma : \calU_{\overline{\sigma}^*}(\zeta^{\kappa_c}_\sigma(\gog)) \to \Fun(\Op_{\gogl}(\overline{\sigma}^*))$ which, restricted to any affine well covered open subset, makes the above diagram commute.
\end{proposition}
	
\begin{proof}
	One can assume that $\calO_{\overline{\sigma}^*} \simeq A((z))$ so that the statement of the Proposition becomes an $A$-linear version of Proposition \ref{rmk:descralphacenter}. The proof given in Appendix \ref{sec:feiginfrenkelclassical} of Propositions \ref{prop:gammaisoFFnocoord} and \ref{rmk:descralphacenter} (which deal with the case $S = \Spec \mC$) translate verbatim to their $A$-linear versions.
\end{proof}

\begin{proposition}\label{cor:feiginfrenkel1section}
    Let $\gog$ be a finite simple Lie algebra over $\mC$ and let $\gogl$ be its Langlands dual Lie algebra. Let $X \to S$ be a smooth family of curves over a fixed base quasi-separated, noetherian scheme $S$ and let $\sigma : S \to X$ be a section. Then the map constructed in Corollary \ref{coro:morphismbtwcenter}, $ \Phi^\zeta_{\sigma} : \calU_{\overline{\sigma}^*}\left(\zeta^{\kappa_c}_\sigma(\gog)\right) \to Z_{\kappa_c}(\hat{\gog}_\sigma) = Z(\calU_{\kappa_c}(\hat{\gog}_{\sigma}))$ is an isomorphism.

    We will refer to the canonical identification
    \[
        \Psi_{\sigma} = \Phi^\zeta_{\sigma}\circ\gamma_\sigma^{-1} : \Fun\left( \Op_{\gogl}\left(\overline{\sigma}^*\right) \right) = Z_{\kappa_c}(\hat{\gog}_{\sigma})
    \]
    as the \emph{Feigin-Frenkel isomorphism}.
\end{proposition}

\begin{proof}
    Remark that the statement is a version of Theorem \ref{thm:feiginfrenkel} in families. Let us start by noticing that the discussion of Remark \ref{rmk:classicalfeifreiso} and Theorem \ref{thm:feiginfrenkel} hold over any base ring $A \in \Aff_{\mC}$, replacing $\mC[[z]]$ with $A[[z]]$. The proof can be emulated in the $A$-linear setting with essentially no significant changes. The only point which requires some work is \textquote{Step $(b)$}: proving an analogue of \cite[Prop. 4.3.4]{frenkel2004vertex} in the $A$-linear setting. This is dealt with in Section 5 of \cite{cas2023}.

    The first point may be checked locally, so that we may assume that $S = \Spec A$ is affine and well covered. By an $A$-linear version of Remark \ref{rmk:comparisonutildePHI}, the choice of any coordinate identifies $\Phi^\zeta_\sigma$ with the morphism $\Phi^\zeta_{\text{FF}}$ appearing in the proof of Remark \ref{rmk:classicalfeifreiso}, so that by the $A$-linear version of Theorem \ref{thm:feiginfrenkel} we know it is an isomorphism.
\end{proof}

We conclude with a remark on how the topologies of $\Fun(\Op_{\gogl}(\overline{\sigma}^*))$ and $Z_{\kappa_c}(\hat{\gog}_\sigma)$ are related via the Feigin-Frenkel isomorphism; following Proposition \ref{prop:comparisontopologiesonesingularity}.

\begin{definition}\label{def:idealsonesection}
	Consider a non negative integer $k$; we define the following sheaves:
	\begin{itemize}
		\item We write $\calI_{\sigma,k}$ for the closed left ideal $\calU_{\kappa_c}(\hat{\gog}_\sigma)\cdot (\gog \otimes \calO_{\overline{\sigma}}(-k)) \subset \calU_{\kappa_c}(\hat{\gog}_\sigma)$ and $\calI^Z_{\sigma,k} = Z_{\kappa_c}(\hat{\gog}_{\sigma}) \cap \calI_{\sigma,k}$. These form a basis for the topology of $Z_{\kappa_c}(\hat{\gog}_\sigma)$; 
		\item Consider a basis $x_l$ of $\check{V}^{\can}$ such that $[\frac{1}{2}\check{h}_0,x_l] = d_lx_l$ (c.f. Section \ref{ssec:canonicalrepresentativesopers}) and let $x_l^*$ be its dual basis; we write $\calJ_{\sigma,k}$ for the closed ideal of $\overline{\Sym}((\check{V}^{\can})^*\otimes\calO_{\overline{\sigma}^*})$ generated by $x_l^* \otimes \calO_{\overline{\sigma}}(-d_lk)$ so that 
		\[
			\overline{\Sym}_{\lowOS}((\check{V}^{\can})^*\otimes\calO_{\overline{\sigma}^*})/\calJ_{\sigma,k} = \Sym_{\lowOS}\left( \bigoplus_l x_l^* \otimes (\calO_{\overline{\sigma}^*}/\calO_{\overline{\sigma}}(-d_lk)) \right).
		\] 
		These are open ideals and form a basis for the topology of the sheaf $\overline{\Sym}_{\lowOS}((\check{V}^{\can})^*\otimes\calO_{\overline{\sigma}^*})$;
		\item In the case where $S$ is affine and well covered with a fixed coordinate $t$, we define the ideal $\calJ^{\Op,t}_{\sigma,k} \subset \Fun(\Op_{\gogl}(\overline{\sigma}^*))$ to be the image of $\calJ_{\sigma,k}$ along the topological isomorphism
		\[
			\chi^*_{\sigma,t} : \overline{\Sym}_{\lowOS}\left((\check{V}^{\can})^* \otimes \calO_{\overline{\sigma}^*}\right) \to \Fun \left( \Op_{\gogl}(\overline{\sigma}^*) \right).
		\]
		Just as before, these ideals form a basis for the topology of the sheaf $\Fun(\Op_{\gogl}(\overline{\sigma}^*))$.
	\end{itemize}
\end{definition}

\begin{lemma}\label{cor:comparisontopologiesonesection}
	Assume that $S$ is affine and well covered. Then, after the choice of a coordinate $t$, the Feigin-Frenkel isomorphism $\Psi_\sigma$ identifies $\calJ^{\Op,t}_{\sigma,k}$ with $\calI^Z_{\sigma,k}$.
\end{lemma}

\begin{proof}
	Again, as in the case of the previous Propositions, this Lemma is an $A$-linear version of Proposition \ref{prop:comparisontopologiesonesingularity}. 
\end{proof}

\subsection{The map \texorpdfstring{$\gamma$}{gamma} in case of multiple sections}

Here we construct an analogue of the isomorphism $\gamma$ in the case of multiple sections. The strategy to define such a canonical isomorphism is to construct it locally with the aid of a coordinate $t$ and then show that its formation is actually independent from the choice of such a coordinate. This will be achieved by exploiting the factorization properties of $\gamma$ in combination with Proposition \ref{coro:descralphacenterv2}.

Recall the description of $\Fun (\Op_{\gogl}(\oSigma^*))$ and of $\Fun(\Op_{\gogl}(\oSigma))$ given in Section \ref{ssec:canonicalrepresentativesopers}; in particular in the case where $S$ is affine and well covered with a coordinate $t$, consider the isomorphisms therein constructed:
\begin{align*}
	\chi^*_{\Sigma,t} &: \overline{\Sym}_{\lowOS}\left( \check{V}^{\can} \otimes \pbarOpoli \right) \to \Fun\left( \Op_{\gogl}(\oSigma^*)\right), \\
	\chi^*_{\Sigma,t} &: \Sym_{\lowOS}\left( \check{V}^{\can} \otimes \pbarOpoli/\pbarO \right) \to \Fun\left( \Op_{\gogl}(\oSigma)\right).
\end{align*}
Recall that $\chi^*_{\Sigma,t}$ preserves the factorization structures as well.
\smallskip

As in the previous section let us consider the commutative $\mC$-algebra $\mC[\Op_{\gogl}(D)]$, and as done before, consider $D$ equipped with a specified coordinate $z$. In this case, the isomorphism $\chi^*_z: \Sym(\check{V}^{\can} \otimes \mC((z))/\mC[[z]]) \to \mC[\Op_{\gogl}(D)]$ induces an injection $(\check{V}^{\can})^* \hookrightarrow \mC[\Op_{\gogl}(D)]$ by considering the restriction of $\chi^*_z$ to $(\check{V}^{\can})^*\otimes 1/z$. This injection induces an isomorphism of commutative vertex algebras between $\mC[\Op_{\gogl}(D)]$ and $V^0((\check{V}^{\can})^*)$, the universal vertex algebra of level $0$ for the abelian Lie algebra $(\check{V}^{\can})^*$. It then follows from Proposition \ref{prop:descrenvelopingalg} that the composition
\[\begin{tikzcd}[column sep=tiny]
	{(\check{V}^{\can})^*\otimes\pbarOpoli} & {\mC[\Op_{\gogl}(D)] \otimes \pbarOpoli} & {\Lie_{\overline{\Sigma}^*}\left(\mC[\Op_{\gog}(D)] \otimes \pbarO\right) } & {\calU_{\overline{\Sigma}^*}\left( \mC[\Op_\gog(D)]\otimes\pbarO\right)}
	\arrow[from=1-1, to=1-2]
	\arrow[from=1-2, to=1-3]
	\arrow[from=1-3, to=1-4]
\end{tikzcd}\]
induces an isomorphism
\[
	\chi^*_{\Sigma,U} : \overline{\Sym}_{\lowOS}\left((\check{V}^{\can})^*\otimes\pbarOpoli\right) \to \calU_{\overline{\Sigma}^*}\left( \mC[\Op_{\gogl}(D)]\otimes\pbarO\right)	\]
which is compatible with the factorization structures on both sides.

\smallskip

Using the isomorphisms $\chi^*_{\Sigma,t},\chi^*_{\Sigma,U}$, we now construct an isomorphism
\[
	\gamma_{\Sigma,t} : \calU_{\oSigma^*}\left( \zeta^{\kappa_c}_{\Sigma}(\gog) \right) \to \Fun\left(\Op_{\gogl}(\oSigma^*) \right)
\] 
locally and then show that it is independent from the choice of a coordinate in Theorem \ref{thm:feiginfrenkelcasarinmaffei}. The construction of $\gamma_{\Sigma,t}$ is done in analogy with diagram \eqref{eq:diagramgammaonesection}.

\begin{definition} Assume that $S$ is affine and well covered and let $t \in \pbarO(S)$ be a coordinate. We define locally $\gamma_{\Sigma,t} : \calU_{\overline{\Sigma}^*}\left(\zeta^{\kappa_c}_\Sigma(\gog)\right) \to \Fun\left( \Op_{\gogl}(\overline{\Sigma}^*) \right)$ in order to make the following diagram commute:

\[\begin{tikzcd}
	{\calU_{\overline{\Sigma}^*}(\zeta^{\kappa_c}_{\Sigma}(\gog))} && {\Fun\left( \Op_{\gogl}(\overline{\Sigma}^*) \right)} \\
	\\
	{\calU_{\overline{\Sigma}^*}\left(\mC[\Op_{\gogl}(D)]\otimes\pbarOpoli\right)} && {\overline{\Sym}_{\lowOS}\left( (\check{V}^{\can})^* \otimes\pbarOpoli \right)}
	\arrow["{\gamma_{\Sigma,t}}", from=1-1, to=1-3]
	\arrow["{\calU_{\overline{\Sigma}^*}(\calY_t)}", from=3-1, to=1-1]
	\arrow["{\chi_{\Sigma,t}^*}"', from=3-3, to=1-3]
	\arrow["{\chi^*_{\Sigma,U}}", from=3-3, to=3-1]
\end{tikzcd}\]
This is possible since every other map is an isomorphism, so that $\gamma_{\Sigma,t}$ is well defined and an isomorphism. As the left vertical map is concerned, recall that $\calY_t : \mC[\Op_{\gogl}(D)]\otimes\pbarO \to \zeta^{\kappa_c}_{\Sigma}(\gog)$ is an isomorphism by \cite[Corollary \ref{coro:subchiralalg}]{casmaffei1} (see also Section \ref{ssec:richiamiVkSg}).
\end{definition}

\begin{remark}\label{rmk:factorizationofgamma}
	The isomorphism $\gamma_{\Sigma,t}$ is compatible with the factorization structures. This follows by the fact that it is constructed as a composition of isomorphisms compatible with the factorization structures.
\end{remark}

\begin{theorem}\label{thm:feiginfrenkelcasarinmaffei}
	Assume that $S$ is affine, noetherian, integral and well covered. Let $\Sigma$ be an arbitrary finite set of sections of $X \to S$, then the morphism $\gamma_{\Sigma,t}$ is independent from the choice of the coordinate $t$. It follows that for an arbitrary quasi-separated, noetherian and integral scheme there is a canonical isomorphism of commutative complete topological $\calO_S$-algebras
	\[
		\gamma_{\Sigma} : \calU_{\overline{\Sigma}^*}\left(\zeta^{\kappa_c}_\Sigma(\gog)\right) \to \Fun\left( \Op_{\gogl}(\overline{\Sigma}^*) \right).
	\]
\end{theorem}

\begin{proof}
	Recall that by Proposition \ref{coro:descralphacenterv2} we already know the assertion of the Proposition in the case of a single section, since $\gamma_{\Sigma,t}$ is exactly the isomorphism provided by the Feigin-Frenkel Theorem. Without loss of generality we can assume that there are no components of $\Sigma$ which are equal and consider $j : S_{\neq} \hookrightarrow S$ be the open immersion corresponding to the open subset where all sections are different to one another (with respect to Definition \ref{def:openclosedfactorization}, we have $S_{\neq} = U_{I/I}$), which is therefore non-empty. Given two different coordinates $t,s$ consider the diagram
	\[\begin{tikzcd}[column sep=tiny]
	{\calU_{\overline{\Sigma}^*}\left(\zeta^{\kappa_c}_\Sigma(\gog)\right)} && {\calU_{\overline{\Sigma}^*}\left(\zeta^{\kappa_c}_\Sigma(\gog)\right)} \\
	& {\Fun\left( \Op_{\gogl}(\overline{\Sigma}^*) \right)} \\
	{j_*\hat{j}^*\left( \calU_{\overline{\Sigma}^*}\left(\zeta^{\kappa_c}_\Sigma(\gog)\right) \right) } && {j_*\hat{j}^*\left( \calU_{\overline{\Sigma}^*}\left(\zeta^{\kappa_c}_\Sigma(\gog)\right) \right) } \\
	& {j_*\hat{j}^*\left( \Fun\left( \Op_{\gogl}(\overline{\Sigma}^*) \right) \right)} \\
	{j_*\hat{j}^*\left( \bigotimes^!_{i\in I} \calU_{\overline{\sigma}_i^*}\left(\zeta^{\kappa_c}_{\sigma_i}(\gog)\right) \right)}&& {j_*\hat{j}^*\left( \bigotimes^!_{i\in I} \calU_{\overline{\sigma}_i^*}\left(\zeta^{\kappa_c}_{\sigma_i}(\gog)\right) \right)} \\
	& {j_*\hat{j}^*\left( \bigotimes^!_{i\in I} \Fun\left( \Op_{\gogl}(\overline{\sigma}_i^*) \right)\right)} \\
	&& {}
	\arrow[dashed, no head, from=1-1, to=1-3]
	\arrow["{\gamma_{\Sigma,s}}", from=1-1, to=2-2]
	\arrow[hook, from=1-1, to=3-1]
	\arrow["{\gamma_{\Sigma,t}}"', from=1-3, to=2-2]
	\arrow[hook, from=1-3, to=3-3]
	\arrow[hook,from=2-2, to=4-2]
	\arrow[dashed, no head, from=3-1, to=3-3]
	\arrow["{j_*\hat{j}^*\gamma_{\Sigma,s}}"', from=3-1, to=4-2]
	\arrow["{(\fact^{\calU})^{-1}}"', from=3-1, to=5-1]
	\arrow["{j_*\hat{j}^*\gamma_{\Sigma,t}}", from=3-3, to=4-2]
	\arrow["{(\fact^{\calU})^{-1}}", from=3-3, to=5-3]
	\arrow["{(\fact^{\Op})^{-1}}"{description,pos=0.3}, from=4-2, to=6-2]
	\arrow[dashed, no head, from=5-1, to=5-3]
	\arrow["{j_*\hat{j}^*(\otimes^! \gamma_{\sigma_i,s_i})}"', from=5-1, to=6-2]
	\arrow["{j_*\hat{j}^*(\otimes^! \gamma_{\sigma_i,t_i})}", from=5-3, to=6-2]
\end{tikzcd}\]
	where the dotted lines stand for the identity morphisms. The top vertical arrows are injective because we are assuming that $S$ is integral and the sheaves we are considering are QCCF By the case of a single section we know that the bottom triangle commutes. We know that each single square commutes as well by Remark \ref{rmk:factorizationofgamma}. Since the first vertical arrows are injections it follows that the top triangle commutes as well and hence $\gamma_{\Sigma,s} = \gamma_{\Sigma,t}$.

	Since the isomorphism $\gamma_{\Sigma,t}$ is independent from the choice of the coordinate $t$ we may glue the local isomorphisms to a global canonical isomorphism
	\[
		\gamma_\Sigma : \calU_{\overline{\Sigma}^*}\left(\zeta^{\kappa_c}_\Sigma(\gog)\right) \to \Fun\left( \Op_{\gogl}(\overline{\Sigma}^*) \right). \qedhere
	\]
\end{proof}

\subsection{The center in the case of multiple sections}

In this section we prove that the map $\Phi^\zeta_\Sigma$ from Corollary \ref{coro:morphismbtwcenter} is an isomorphism. We will directly consider the composition 
\[
	\Psi_{\Sigma} = \Phi^{\zeta}_{\Sigma} \circ \gamma_{\Sigma}^{-1} : \Fun\left( \Op_{\gogl}(\oSigma^*) \right) \to Z_{\kappa_c}(\hat{\gog}_{\Sigma})
\]
of the map $\Phi^{\zeta}_{\Sigma}$ of Corollary \ref{coro:morphismbtwcenter} and the inverse of the map $\gamma_{\Sigma}$ of Theorem \ref{thm:feiginfrenkelcasarinmaffei}.
\noindent Recall the results on the center $Z_{\kappa_c}(\hat{\gog}_\Sigma) = Z(\calU_{\kappa_c}(\hat{\gog}_\Sigma))$ of Section \ref{ssec:factorizationcenter}.

We will start with some technical Lemmata which will allow us to prove Theorem \ref{thm:teofinale1} by induction using the factorization properties. 

\noindent Let us start by recalling that  if
\[\xymatrix{
	0 \ar[r]&  X_n \ar[r] & Y_n \ar[r]& Z_n \ar[r] & 0
}\]
is a projective system of exact sequences indexed by $\mN$ and $X_n\lra X_{n-1}$ are surjective maps, then the sequence of their limits is exact (see for example \cite[\href{https://stacks.math.columbia.edu/tag/0598}{Tag 0598}]{stacks-project}).

In particular if $N$ is a complete topological module with a countable fundamental system of neighborhoods of $0$, and $M$ is a closed submodule the quotient $N/M$ with the quotient topology is complete. Indeed if $V_n$ is a decreasing fundamental system of neighborhoods of $0$ for $N$, then we have exact sequences
\[\xymatrix{
	0 \ar[r]&  \frac{M}{V_n\cap M} \ar[r] & \frac{N}{V_n} \ar[r] & \frac{N}{M+V_n} \ar[r] & 0
}\]
and taking the limit we get the desired claim. 

\begin{lemma}\label{lem:isocriterialemma}
	Assume that $M$ is a complete topological $A$-module whose topology is defined by a countable system of noz $\{ M_n \} _ { n \in \mN }$. 
	Let $f \in A$ be such that $M/M_n$ has no $f$-torsion for all $n$. 
	Denote by $i : \Spec A/f \to \Spec A, j : \Spec A_f \to \Spec A$ the corresponding closed and open immersions. 
	Denote by $\hat{i}^*,\hat{j}^*$ the corresponding completed pullbacks between the categories of complete topological modules. Then 
	\begin{enumerate}
		\item $\hat{i}^*M\simeq M/fM$ with the quotient topology. In particular $fM$ is closed in $M$ and the quotient $M/fM$ is complete.  
		\item The natural map from $M$ to $\hat j^*M$ is injective, the image of $M$ is closed and $M$ has the subspace topology. More precisely $\hat j ^* M_n\cap M=M_n$. 
	\end{enumerate}
\end{lemma}

\begin{proof}
	Recall that $\hat{j}^*M = \varprojlim_{n \in \mN} (M/M_n)_f$ and that 
	\[
	\hat{i}^*M = \varprojlim_{n \in \mN} \frac{M/M_n}{f(M/M_n)}=\varprojlim_{n \in \mN} \frac{M}{fM+M_n}.
	\]
	By assumption the sequence
	\[\xymatrix{
		0 \ar[r]& 
		\frac{M}{M_n} \ar[r]^{\cdot f}&
		\frac{M}{M_n} \ar[r]&
		\frac{M/M_n}{f(M/M_n)} \ar[r]&
		0
	}\]
	is exact.
	Taking the limit we obtain the claim about $\hat{i}^*M$.
	
	Let us now prove the statement about the module $\hat j^*M$. Since $M/M_n$ has no $f$ torsion the map $M/M_n\lra (M/M_n)_f$ is injective hence if $x\in M\cap \hat j^*M_n$ then the image of $x$ in $M/M_n$ must be zero, hence $x\in M_n$. This implies the last claim. 
	
	Finally the multiplication by $f$ is invertible in $j^*M$. 
\end{proof}

In order to prove our main Theorem \ref{thm:teofinale1} we need the following algebraic Lemma.

\begin{lemma}\label{lem:isocriteria}
	Assume that $M,N$ are complete topological $A$ modules, assume that the topology on both modules is countable and let $\{M_k\}_{k \in \mN},\{N_k\}_{k \in \mN}$ be fundamental system of neighborhoods of $0$ for $M$ and $N$ respectively. Let $f \in A$ be an arbitrary element and assume that the quotients $M/M_k,N/N_k$ have no $f$-torsion. Denote by $i : \Spec A/f \to \Spec A, j : \Spec A_f \to \Spec A$ the corresponding closed and open immersions and by $\hat{i}^*,\hat{j}^*$ the corresponding completed pullbacks between the categories of complete topological modules. Let $\varphi : M \to N$ be a continuous morphism. The following hold. 
	\begin{enumerate}
		\item Assuming that $\hat{j}^*\varphi$ is a bicontinuous isomorphism, that $\hat{i}^*\varphi$ is an isomorphism, that $\varphi(M_k) \subset N_k$, and that $\hat{i}^*M_k \to \hat{i}^*N_k$ is an isomorphism for every $k$, then the map $\varphi : M \to N$ is a bicontinuous isomorphism;
		\item If we further assume that $\varphi$ induces bicontinuous isomorphisms $\hat{j}^*M_k \to \hat{j}^*N_k$, then $\varphi$ induces bicontinuous isomorphisms $M_k \to N_k$.
	\end{enumerate}
\end{lemma}

\begin{proof}
By point $2$ of Lemma \ref{lem:isocriterialemma} $M$ and $N$ are submodules of $\hat j^*M$ and $\hat j^* N$ and they have the induced topologies. In particular, since $\hat{j}^*\varphi : \hat{j}^*M \simeq \hat{j}^*N$ this implies that $\grf$ is injective and that $M$ has the topology induced by $N$. Since $M$ is complete, the image of $M$ in $N$ is closed and $Q=N/M$ is complete with respect to the quotient topology. In order to prove point $1$ of the lemma it is therefore enough to prove that $Q=0$. 

Notice that a fundamental system of neighborhoods of $0$ for $Q$ is given by $Q_k=M+N_k/M$. 

We claim that $Q/Q_k = N/(M + N_k)$ has no $f$-torsion. Let us start by noticing that by point $1$ of Lemma \ref{lem:isocriterialemma} and by the assumption that $\hat{i}^*M_k = \hat{i}^*N_k$ it follows that $M_k/fM_k = N_k/fN_k$ so that $N_k = M_k + fN_k$, in addition, by the assumption that $M/fM = N/fN$, it follows that $Q$ has no $f$-torsion. In order to prove that $Q/Q_k = N/(M+N_k)$ has no $f$-torsion, assume we have $n \in N$  such that $fn \in M + N_k$. Write $fn = m + n_s$. Since $N_k = M_k +  fN_k$, we may write $n_s = m' + fn_s'$ for some $m' \in M_k \subset M$ and $n_s' \in N_k$ it follows that $f(n - n_s') = m + m'$ and by the fact that $N/M$ has no $f$-torsion it follows that there exists some $m'' \in M$ such that $n - n_s' = m''$. We conclude that $n = m'' + n_s'$ so that $n = 0$ in $N/(M+N_k)$ and therefore the latter has no $f$-torsion.  

Finally, we claim that $\hat j ^*Q=0$. In order to prove that, let $M'_k=\grf^{-1}(N_k)$, since $M$ has the topology induced by $N$, the submodules $M_k'$ form a fundamental system of neighborhoods of $0$ for $M$ and since $M/M'_k\subset N/N_k$ the modules $M/M_k'$ have no $f$-torsion.  We have exact sequences 
\[\xymatrix{ 0 \ar[r] & 
	\left(\frac{M}{M'_k}\right)_f \ar[r]  &
	\left(\frac{N}{N_k} \right)_f \ar[r]  &
	\left(\frac{Q}{Q_k} \right)_f \ar[r]  & 0 }\]
Taking the limit over $k$ this sequence remains exact and using that $\hat j^* \grf $ is an isomorphism we see that $\hat{j}^*Q = 0$. Since $Q$ satisfies the hypothesis of Lemma \ref{lem:isocriterialemma} ($Q/Q_k$ has no $f$-torsion) we have $Q \subset \hat{j}^*Q$ so that $Q$ must be $0$ as well.

Point $2$ of the Lemma follows by applying point $1$ to $M = M_k,N=N_k$.
\end{proof}

In order to use the above Lemma, we need a specified system of neighborhoods of zero for $\Fun\left( \Op_{\gogl}(\overline{\Sigma}^*) \right)$ and $Z_{\kappa_c}(\hat{\gog}_\Sigma)$. We are led in their definition by Lemma \ref{cor:comparisontopologiesonesection}. 

\begin{definition}\label{def:indices}
	We start with some index notation. Let $\Sigma$ a set of sections of $X \to S$ indexed by a finite set $J$. We consider $J$-tuples of non negative integers, to be denoted by $\underline{k} = (k_j)_{j \in J}$, and to such an $J$-tuple we attach the ideal $\pbarO(-\underline{k}) \subset \pbarO$ which we define as the kernel of $\pbarO \to p_*\left(\calO_X/\prod_{j \in J} \calI_{\sigma_j}^{k_j}\right)$ where $\calI_{\sigma_j}$ is the ideal in $\calO_X$ defining the section $\sigma_j$. These are open ideals and form a refinement of the system of neighborhoods $\pbarO(-n)$ of $0$ for $\pbarO$.

	Given a surjection $\pi : J \twoheadrightarrow I$ we write $\underline{k}_{J/I}$ for the $I$-tuple $(\underline{k}_{J/I})_i = \sum_{\pi(j) = i} k_j$ and $\underline{k}_{J_i}$ for the $J_i = \pi^{-1}(i)$-tuple $(\underline{k}_{J_i})_j = k_j$.
\end{definition}

\begin{remark}\label{rmk:behaviormultiindexbasechange}
	Consider a collection of sections of $X \to S$ indexed by a finite set $J$ and a surjection $J \twoheadrightarrow I$. Let $i_{J/I} : V_{J/I} \to S$ and $j_{J/I} : U_{J/I} \to S$ be the morphisms introduced in Definition \ref{def:openclosedfactorization}. Then
	\begin{enumerate}
		\item Under the identification $\hat{i}_{J/I}^*\pbarO = \calO_{\overline{\Sigma}_I}$ the ideal $\hat{i}_{J/I}^*\pbarO(-\underline{k})$ identifies with the ideal $\calO_{\overline{\Sigma}_I}(-\underline{k}_{J/I})$;
		\item Under the identification $\hat{j}_{J/I}^*\pbarO = \prod_{i \in I} \calO_{\overline{\Sigma}_{J_i}}$ the ideal $\hat{j}_{J/I}^*\pbarO(-\underline{k})$ identifies with the ideal $\prod_{i \in I}\calO_{\overline{\Sigma}_{J_i}}(-\underline{k}_{J_i})$;
	\end{enumerate}
\end{remark}

\noindent We move on defining the system of neighborhoods of interest, taking Definition \ref{def:idealsonesection} as a guideline.

\begin{definition}\label{def:topologicalideals}
	Let $\Sigma$ be a collection of sections of $X \to S$ indexed by a finite set $J$ and let $\underline{k} = (k_j)_{j \in J}$ a $J$-tuple of non-negative integers. We consider the following constructions: 
	\begin{enumerate}
		\item Consider a basis $x_l$ of $\check{V}^{\can}$ such that $[\frac{1}{2}\check{h}_0,x_l] = d_lx_l$ (c.f. Section \ref{ssec:canonicalrepresentativesopers}) and let $x_l^*$ be its dual basis. Let $\calJ_{\Sigma,\underline{k}}$ be the closed ideal of $\overline{\Sym}_{\lowOS}((\check{V}^{\can})^*\otimes \pbarOpoli)$ defined as the kernel of 
			\[
				\overline{\Sym}_{\lowOS}((\check{V}^{\can})^* \otimes \pbarOpoli) \to \Sym_{\lowOS}\left( \bigoplus_l x_l^* \otimes \frac{\pbarOpoli}{\pbarO(-d_l\underline{k})} \right).
			\]
		\item In the case $S$ is well covered with a coordinate $t$, we define $\calJ^{\Op,t}_{\Sigma,\underline{k}}$ to be the image of $\calJ_{\Sigma,\underline{k}}$ along the topological isomorphism
		\[
			\chi^*_t : \overline{\Sym}_{\lowOS}\left( (\check{V}^{\can})^* \otimes \pbarOpoli \right) \simeq \Fun\left( \Op_{\gogl}(\overline{\Sigma}^*) \right).
		\]
		These form a fundamental system of neighborhoods for $\Fun(\Op_{\gogl}(\oSigma^*))$;
		\item Let $\calI_{\Sigma,\underline{k}}$ be the left ideal of $\calU_{\kappa_c}(\hat{\gog}_\Sigma)$ defined by 
			\[
				\calI_{\Sigma,\underline{k}} = \ugsig{\kappa_c} \cdot \gog \otimes \pbarO(-\underline{k})
			\]
		and let $\calI^Z_{\Sigma,\underline{k}} = \calI_{\Sigma,\underline{k}}\cap Z_{\kappa_c}(\hat{\gog}_\Sigma)$.
	\end{enumerate}
\end{definition}

The following Lemma directly follows from Remark \ref{rmk:behaviormultiindexbasechange}; we state it precisely since the following are the objects that will be needed in the proof of Theorem \ref{thm:teofinale1}.

\begin{lemma}\label{lem:behaviormultiindexideals}
	Fix a collection of sections $\Sigma$ indexed by a finite set $J$ and fix a surjection $J \twoheadrightarrow I$. Consider the notation of Definition \ref{def:indices} and the morphisms $i_{J/I} : V_{J/I} \to S$, $j_{J/I} : U_{J/I} \to S$ of Definition \ref{def:openclosedfactorization}.
	\begin{enumerate}
		\item The isomorphism $\Ran^{L_{\oSigma}\check{V}^{\can}}_{J/I} : \hat{i}^*_{J/I}\overline{\Sym}_{\lowOS}((\check{V}^{\can})^*\otimes\pbarOpoli) = \overline{\Sym}_{\lowOS}((\check{V}^{\can})^*\otimes\calO_{\overline{\Sigma}^*_I})$ identifies $\hat{i}^*_{J/I}\calJ_{\Sigma,\underline{k}}$ with $\calJ_{\Sigma_I, \underline{k}_{J/I}}$; 
		\item The isomorphism $\Ran^{\Op}_{J/I} : \hat{i}^*_{J/I}\Fun(\Op_{\gogl}(\oSigma^*))  = \Fun(\Op_{\gogl}(\overline{\Sigma}^*_{I}))$ identifies $\hat{i}^*_{J/I}\calJ^{\Op}_{\Sigma,\underline{k}}$ with $\calJ^{\Op,t}_{\Sigma_I,\underline{k}_{J/I}}$;
		\item The isomorphism $\Ran^{\calU}_{J/I} : \hat{i}^*_{J/I}\ugsig{\kappa_c} = \calU_{\kappa_c}(\hat{\gog}_{\Sigma_I})$ identifies $\hat{i}^*_{J/I}\calI_{\Sigma,\underline{k}}$ with $\calI_{\Sigma_I, \underline{k}_{J/I}}$;
		\item The isomorphism $\fact^{L_{\oSigma}\check{V}^{\can}}_{J/I} : \hat{j}^*_{J/I}\overline{\Sym}_{\lowOS}((\check{V}^{\can})^*\otimes\pbarOpoli) = \hat{j}^*_{J/I}\otimes^!_{i \in I}\overline{\Sym}_{\lowOS}((\check{V}^{\can})^*\otimes\calO_{\overline{\Sigma}_{J_i}})$ identifies $\hat{j}^*_{J/I}\calJ_{\Sigma,\underline{k}}$ with 
		$$\ker \left(\hat{j}^*_{J/I} \otimes^!_{i \in I}  \overline{\Sym}_{\lowOS}((\check{V}^{\can})^*\otimes\calO_{\overline{\Sigma}_{J_i}}) \to j_{J/I}^*\otimes_{i\in I} \overline{\Sym}_{\lowOS}((\check{V}^{\can})^*\otimes\calO_{\overline{\Sigma}_{J_i}})/\calJ_{\Sigma_I, \underline{k}_{J_i}} \right);$$
		\item The isomorphism $\fact^{\Op}_{J/I} : \hat{j}^*_{J/I}\Fun(\Op_{\gogl}(\oSigma^*)) = \hat{j}^*_{J/I}\otimes^!_{i \in I}\Fun(\Op_{\gogl}(\overline{\Sigma}^*_{J_i}))$ identifies $\hat{j}^*_{J/I}\calJ^{\Op,t}_{\Sigma,\underline{k}}$ with 
		$$\ker \left(\hat{j}^*_{J/I} \otimes^!_{i \in I}  \Fun(\Op_{\gogl}(\overline{\Sigma}^*_{J_i})) \to j_{J/I}^*\otimes_{i\in I} \Fun(\Op_{\gogl}(\overline{\Sigma}^*_{J_i}))/\calJ^{\Op,t}_{\Sigma_I, \underline{k}_{J_i}} \right);$$
		\item The isomorphism $\fact^{\calU}_{J/I} : \hat{j}^*_{J/I}\ugsig{\kappa_c} = \hat{j}^*_{J/I}\otimes^!_{i \in I}\calU_{\kappa_c}(\hat{\gog}_{\Sigma_{J_i}})$ identifies $\hat{j}^*_{J/I}\calI_{\Sigma,\underline{k}}$ with $$\ker \left( \hat{j}^*_{J/I}\otimes^!_{i \in I}  \calU_{\kappa_c}(\hat{\gog}_{\Sigma_{J_i}}) \to j^{*}_{J/I}\otimes_{i\in I} \calU_{\kappa_c}(\hat{\gog}_{\Sigma_{J_i}})/\calI_{\Sigma_I, \underline{k}_{J_i}} \right).$$
	\end{enumerate}
\end{lemma}

\begin{proof}
	Points $1$ and $4$ follow by the fact that the factorization structure on $\overline{\Sym}_{\lowOS}((\check{V}^{\can})^*\otimes \pbarOpoli)$ is induced by that of $\pbarOpoli$ together with Remark \ref{rmk:behaviormultiindexbasechange}; analogously points $3$ and $6$ follow by the fact that the factorization structure on $\calU_{\kappa_c}(\hat{\gog}_{\Sigma})$ is induced by that of $\pbarOpoli$ together with Remark \ref{rmk:behaviormultiindexbasechange}. Points $2$ and $5$ follow by the fact that $\chi^*_{\Sigma,t}$ preserves the factorization structures. 
\end{proof}

\begin{corollary}
	Let $\Sigma$ be a collection of sections of $X \to S$ indexed by a finite set $J$ and assume all sections $\sigma_j$ are distinct, so that the open set $U_{J/J} \subset S$ (c.f. Definition \ref{def:openclosedfactorization}) where all sections are disjoint is not empty. Assume that $S$ is affine, integral and well covered with a coordinate $t$ as well. Then with respect to the above notation $\Psi_\Sigma\left( \calJ^{\Op,t}_{\Sigma,\underline{k}}\right) \subset \calI^Z_{\Sigma,\underline{k}}$.
\end{corollary}

\begin{proof}
	Since $\calI^Z_{\Sigma,\underline{k}} = Z_{\kappa_c}(\hat{\gog}_\Sigma) \cap \calI_{\Sigma,\underline{k}}$ it is enough to check that the image of $\calJ^{\Op,t}_{\Sigma,\underline{k}}$ along the composition
	\(
		\Fun\left( \Op_{\gogl}(\oSigma^*)\right) \xrightarrow{\Psi_{\Sigma}} Z_{\kappa_c}(\hat{\gog}_\Sigma) \subset \ugsig{\kappa_c} \to \ugsig{\kappa_c}/\calI_{\overline{\Sigma},\underline{k}}
	\)
	is $0$. The sheaf $\ugsig{\kappa_c}/\calI_{\Sigma,\underline{k}}$ is free as an $\calO_S$-module, so that from the fact that $S$ is integral it follows that $\ugsig{\kappa_c}/\calI_{\Sigma,\underline{k}} \subset (j_{J/J})_*j_{J/J}^*(\ugsig{\kappa_c}/\calI_{\Sigma,\underline{k}})$. We reduce ourselves to check that the image of $\calJ^{\Op,t}_{\Sigma,\underline{k}}$ is zero along
	\[
		\Fun\left( \Op_{\gogl}(\oSigma^*)\right) \xrightarrow{\Psi_{\Sigma}} \ugsig{\kappa_c} \to \ugsig{\kappa_c}/\calI_{\overline{\Sigma},\underline{k}} \to (j_{J/J})_*(j_{J/J})^*\left(\ugsig{\kappa_c}/\calI_{\Sigma,\underline{k}}\right).
	\]
	Consider the commutative diagram
	\[\begin{tikzcd}
	{\Fun\left( \Op_{\gogl}(\oSigma^*)\right)} & {\ugsig{\kappa_c}} & {\ugsig{\kappa_c}/\calI_{\Sigma,\underline{k}}} \\
	\\
	{(j_{I/I})_*\hat{j}^*_{I/I}\left(\Fun\left( \Op_{\gogl}(\oSigma^*)\right) \right)} & {(j_{I/I})_*\hat{j}^*_{I/I}\left(\calU_{\kappa_c}(\hat{\gog}_{\Sigma}) \right)} & {(j_{I/I})_*j_{I/I}^*\left( \ugsig{\kappa_c}/\calI_{\Sigma,\underline{k}} \right)} \\
	\\
	{\bigotimes_{i \in I} \left( \Fun\left( \Op_{\gogl}(\overline{\sigma}^*_i)\right)/\calJ^{\Op,t}_{\sigma_i,k_i} \right)} & {\bigotimes_{i \in I}\left( \calU_{\kappa_c}(\hat{\gog}_{\sigma_i})/\calI_{\sigma_i,k_i} \right)}
	\arrow[from=1-1, to=1-2]
	\arrow[hook, from=1-1, to=3-1]
	\arrow[from=1-2, to=1-3]
	\arrow[hook, from=1-2, to=3-2]
	\arrow[hook, from=1-3, to=3-3]
	\arrow[from=3-1, to=3-2]
	\arrow[two heads, from=3-1, to=5-1]
	\arrow[from=3-2, to=3-3]
	\arrow[two heads, from=3-2, to=5-2]
	\arrow[from=5-1, to=5-2]
	\arrow["{=}"{description}, no head, from=5-2, to=3-3]
\end{tikzcd}\]
	The top left square commutes by the factorization properties of $\Psi_{\Sigma}$, the bottom left square commutes by Lemma \ref{cor:comparisontopologiesonesection} (which treats the case of a single section), the top right square commutes because $\calI_{\Sigma,\underline{k}}$ is an open ideal, and the bottom right triangle commutes by Lemma \ref{lem:behaviormultiindexideals}. 
	
	By construction $\calJ^{\Op,t}_{\Sigma,\underline{k}}$ maps to $(j_{I/I})_*\hat{j}^*_{I/I}\calJ^{\Op,t}_{\Sigma,\underline{k}}$ along the first left vertical map, while the latter is the kernel of the second left vertical map, by Lemma \ref{lem:behaviormultiindexideals}. It follows that $\calJ^{\Op,t}_{\Sigma,\underline{k}}$ is sent to $0$ along the composition of the left vertical maps. The statement of the Corollary follows.
\end{proof}

\begin{theorem}\label{thm:teofinale1}
	Assume that $S$ is integral, noetherian and quasi-separated. Let $X \to S$ be a smooth family of curves and $\Sigma = \{ \sigma_j \}_{j \in J} : S \to X^J$ be a collection of sections. Assume that for any surjection $J \twoheadrightarrow I$ the closed subscheme $V_{J/I} \subset S$ (c.f. Definition \ref{def:openclosedfactorization}) is integral. Let
	\[
		\Psi_\Sigma : \Fun\left( \Op_{\gogl}(\overline{\Sigma}^*) \right) \to Z_{\kappa_c}(\hat{\gog}_\Sigma)
	\] 
	be the composition $\Phi^{\zeta}_{\Sigma}\circ \gamma_{\Sigma}^{-1}$, of the map $\Phi^{\zeta}_{\Sigma}$ of Corollary \ref{coro:morphismbtwcenter} and the inverse of the map $\gamma_{\Sigma}$ of Theorem \ref{thm:feiginfrenkelcasarinmaffei}. Then 
	\begin{enumerate}
		\item $\Psi_{\Sigma}$ is a bicontinuous isomorphism  which respects the factorization structure on both spaces. In particular, $Z_{\kappa_c}(\hat{\gog}_\Sigma)$ is a QCCF sheaf and the collection $Z_{\kappa_c}(\hat{\gog}_\Sigma)$ is a complete topological factorization algebra;
		\item In addition, if $S$ is affine, well covered, equipped with a coordinate $t$, the morphism $\Psi_{\Sigma}$ identifies $\calJ^{\Op,t}_{\Sigma,\underline{k}}$ with $\calI^Z_{\Sigma,\underline{k}}$ (c.f. Definition \ref{def:topologicalideals}).
	\end{enumerate}
\end{theorem}

\begin{proof}
		We prove point $1$ and point $2$ looking at the morphism induced by $\Psi_\Sigma$ among the module of sections of the two sheaves over an arbitrary affine and well covered subset. This implies all claims of the Theorem. We can therefore assume that $S =\Spec A$ and $\pbarO$ is equipped with a coordinate $t \in \pbarO(S)$. 
		
		Let us notice at the beginning that since $\Psi_\Sigma$ is obtained as a composition of morphisms compatible with the (pseudo) factorization structures (see Proposition \ref{prop:factpropertiescalu}) it is itself compatible with the (pseudo) factorization structures. 
		
		We prove both results (1 and 2) together and by induction on $n$, the number of distinct sections which $\Sigma$ is composed of. If $n = 1$ the statement is exactly Proposition \ref{cor:feiginfrenkel1section} together with Lemma \ref{cor:comparisontopologiesonesection}. So suppose that the result holds for every collection of $n'\leq n$ different sections of $X \to S$, and let $\Sigma$ consists of $n+1$ different sections. We write $\Sigma = \{\sigma_1,\dots,\sigma_{n+1} \}$ Recall that by \cite[Section \ref{ssec:descrizionelocale1}]{casmaffei1} (see also Section \ref{ssec:richiamigeometricsetting}), since $S$ is affine and well covered, for any two given indices $l_1,l_2 \leq n+1$, the locus $\{\sigma_{l_1} = \sigma_{l_2}\} \subset S$ is either empty or principal: $\{ \sigma_{l_1} = \sigma_{l_2} \} = \{ f_{l_1,l_2} = 0 \}$ for some $f_{l_1,l_2} \in A$ (in particular for $\sigma_{l_1}(S)\cap\sigma_{l_2}(S) = \emptyset$ we can set $f_{l_1,l_2} = 1$). We can assume without loss of generality that all $f_{l_1,l_2}$ are non zero (if not $\sigma_{l_1} = \sigma_{l_2}$ and we are done by the inductive step) and therefore a nonzero divisors in $A$. 
		
		Consider an arbitrary non-zero divisor $f \in A$. Notice that the quotients $\Fun(\Op_{\gogl}(\oSigma^*))/\calJ^{\Op,t}_{\Sigma,\underline{k}}$ are without $f$-torsion because they are free, while $Z_{\kappa_c}(\hat{\gog}_\Sigma)/\calI^Z_{\Sigma,\underline{k}}$ is without $f$-torsion since it embeds in $\calU_{\kappa_c}(\hat{\gog}_\Sigma)/\calI_{\Sigma,\underline{k}}$, which is free. In order to get our induction going we therefore repeatedly apply Lemma \ref{lem:isocriteria} (with $M = \Fun(\Op_{\gogl}(\oSigma^*))$, $N= Z_{\kappa_c}(\hat{\gog}_\Sigma)$, $M_{\underline{k}} = \calJ^{\Op,t}_{\Sigma,\underline{k}}$ and $ N_{\underline{k}} = \calI^Z_{\Sigma,\underline{k}}$) to reduce to the case where all sections are disjoint. Notice that the Lemma applies in this case as well, even if we are indexing our systems of neighborhoods with multi-indices $\underline{k}$ instead of natural numbers. This follows from the fact that the former system has a cofinal system isomorphic to $\mN$.
		
		In order to do this we proceed as follows: write $J = \{ 1, \dots, n+1 \}$, for $l\neq h \in J$ let $U_{lh} = \Spec A_{f_{lh}},V_{lh} = \Spec A/(f_{lh})$ be the loci where $\sigma_l \neq \sigma_h$ and $\sigma_l = \sigma_h$ respectively, denote by $j_{lh} : U_{lh} \to S, i_{lh} : V_{lh} \to S$ the corresponding immersions. Notice that on each $V_{lh}$, since $\sigma_l = \sigma_h$ on it, $\Sigma_{|V_{lh}}$ is composed of at most $n$ different section; we claim that $\hat{i}^*_{lh}\Psi_\Sigma$ is an isomorphism as well which identifies with $\Psi_{\Sigma_{i=j}}$ via the morphisms $\Ran$ of the factorization structures. In order to prove this, consider the commutative diagram induced by the factorization structures 
		
		\[\begin{tikzcd}
	{\hat{i}^*_{lh}\Fun\left( \Op_{\gogl}(\overline{\Sigma}^* )\right)} && {\Fun\left( \Op_{\gogl}(\overline{\Sigma}^*_{i=j})\right)} \\
	\\
	{\hat{i}_{lh}^*Z_{\kappa_c}(\hat{\gog}_{\Sigma})} && {Z_{\kappa_c}(\hat{\gog}_{\Sigma_{l=h}})}
	\arrow["\Ran^{\Op}", from=1-1, to=1-3]
	\arrow["{\hat{i}_{lh}^*\Psi_{\Sigma}}"', from=1-1, to=3-1]
	\arrow["{\Psi_{\Sigma_{l=h}}}", from=1-3, to=3-3]
	\arrow["\Ran^Z"', from=3-1, to=3-3]
\end{tikzcd}\]
		
	The top horizontal map is an isomorphism by the factorization properties of $\Fun(\Op_{\gogl}(\overline{\Sigma}^*))$, the right vertical arrow is an isomorphism by the inductive assumption. By Lemma \ref{lem:isocriterialemma} $\hat{i}^*_{lh}Z_{\kappa_c}(\hat{\gog}_{\Sigma}) = Z_{\kappa_c}(\hat{\gog}_{\Sigma})/(f_{lh})Z_{\kappa_c}(\hat{\gog}_{\Sigma})$. Notice that the center is $f_{lh}$-saturated ($(f_{lh})z \in Z$ implies $z \in Z$) and therefore the bottom horizontal map is injective. It follows from the fact that $\Psi_{\Sigma_{l=h} \circ \Ran^{\Op}}$ is a topological isomorphism that $\hat{i}^*_{lh}\Psi_{\Sigma}$ and $\Ran^Z$ are topological isomorphisms as well, and that $\hat{i}^*_{lh}\Psi_{\Sigma}$ identifies with $\Psi_{\Sigma_{l=k}}$. 
	
	Next, we claim that $\hat{i}^*_{lh}\Psi_{\Sigma}$ induces an isomorphism $\hat{i}^*_{lh}\calJ^{\Op,t}_{\Sigma,\underline{k}} \to \hat{i}^*_{lh}\calI^Z_{\Sigma,\underline{k}}$ (this is the last hypothesis we need to apply Lemma \ref{lem:isocriteria}). Notice that since $Z_{\kappa_c}(\hat{\gog}_{\Sigma})$ and the quotients $Z_{\kappa_c}(\hat{\gog}_{\Sigma})/\calI^Z_{\Sigma,\underline{k}}$ have no $f_{lh}$ torsion, the map $\hat{i}^*_{lh}\calI^Z_{\Sigma,\underline{k}} \to \hat{i}^*_{lh}Z_{\kappa_c}(\hat{\gog}_\Sigma) = Z_{\kappa_c}(\hat{\gog}_{\Sigma_{l=h}})$ is injective, from this it follows that $\hat{i}^*_{l=h}\calI^Z_{\Sigma,\underline{k}} \subset \calI^Z_{\Sigma_{l=h},\underline{k}_{l=h}}$. Thus, using Lemma \ref{lem:behaviormultiindexideals}, we get a morphism $$\calJ^{\Op,t}_{\Sigma_{l=h},\underline{k}_{l=h}} = \hat{i}^*_{lh}\calJ^{\Op,t}_{\Sigma,\underline{k}} \to \hat{i}^*_{lh}\calI^Z_{\Sigma,\underline{k}} \subset \calI^Z_{\Sigma_{l=h},\underline{k}_{l=h}}$$ which, by the inductive hypothesis, is an isomorphism; it follows that $\hat{i}^*_{lh}\calJ^{\Op,t}_{\Sigma,\underline{k}} \to \hat{i}^*_{lh}\calI^Z_{\Sigma,\underline{k}}$ is an isomorphism as well.
		
	It follows that we are in the setting of Lemma \ref{lem:isocriteria}, so that we reduce to prove that, for example, $\hat{j}_{1,2}^*\Psi$ is an isomorphism. With a similar argument, and applying again Lemma \ref{lem:isocriteria}, we see that it is enough to prove that $\Psi$ is an isomorphism when restricted to $U_{1,2}\cap U_{1,3}$, since it is an isomorphism when restricted to $U_{1,2}\cap V_{1,3}$. We can do this for all possible different indices thanks to the hypothesis on the subschemes $V_{J/I} \subset S$, which, being integral, allow us to apply Lemma \ref{lem:isocriteria}, since the various $f_{lh}$ (if not $0$) remain non zero-divisors also in $V_{J/I}$. 
    We reduce in this way to the case where all sections are disjoint which we assume from now on. Repeating the above reasoning replacing $\Fun(\Op_{\gogl}(\oSigma^*))$ with $\calJ^{\Op,t}_{\Sigma,\underline{k}}$ and $Z_{\kappa_c}(\hat{\gog}_\Sigma)$ with $\calI^Z_{\Sigma,\underline{k}}$ we can prove point $2$ as well. The last ingredient that we need is that in the case where all sections are disjoint $\Psi_{\Sigma}$ induces an isomorphism between $\calJ^{\Op,t}_{\Sigma,\underline{k}}$ and $\calI^Z_{\Sigma,\underline{k}}$.
	
	So to prove point $1$, we need to show that $\Psi_{\Sigma}$ is an isomorphism in the case where all sections are disjoint. In this case, by Proposition \ref{prop:factpropertiescalu}, looking at global sections we have a commutative diagram
	\[\begin{tikzcd}
	{\Fun\left( \Op_{\gogl}(\overline{\Sigma}^*) \right)(S)} && {Z_{\kappa_c}(\hat{\gog}_\Sigma)(S)} \\
	\\
	{\bigotimes^!_{j \in J}\left(\Fun\left( \Op_{\gogl}(\overline{\sigma}_j^*) \right)(S) \right)} && { \bigotimes^!_{j \in J} \left(Z_{\kappa_c}(\hat{\gog}_{\sigma_j})(S)\right)}
	\arrow["{\Psi_\Sigma}", from=1-1, to=1-3]
	\arrow["{\fact}", from=3-1, to=1-1]
	\arrow["{\fact^Z}"', from=3-3, to=1-3]
	\arrow["{\otimes^!_{j\in J}\Psi_{\sigma_j}}"', from=3-1, to=3-3]
\end{tikzcd}\]
	We know that the lower horizontal arrow is an isomorphism by the $1$ section case, by Proposition \ref{prop:factpropertiescalu} the left vertical arrow is an isomorphism as well. Recall that that by Proposition \ref{prop:factorizationcenter} the right vertical map is the restriction of $\left( \otimes^!_{j \in J} \calU_{\kappa_c}(\hat{\gog}_{\sigma_j}) \right) \to \calU_{\kappa_c}(\hat{\gog}_{\Sigma})$ which is an isomorphism by the factorization properties of $\calU_{\kappa_c}(\hat{\gog}_\Sigma)$ (see \cite[Proposition \ref{prop:factpropertiesgogcalugog}]{casmaffei1}). Since by the $1$-section case we know $Z_{\kappa_c}(\hat{\gog}_{\sigma_j})$ is QCCF, so that we can apply Lemma \ref{lem:isoprodcentro} and deduce that $\fact^Z$ is an isomorphism. It follows that $\Psi_{\Sigma}$ is an isomorphism as well. 
	
	In the same way, to prove point $2$ we are reduced to the case where all sections are disjoint. Recall that by Lemma \ref{cor:comparisontopologiesonesection},we have that $\Psi_{\sigma_j}$ identifies $\calJ^{\Op,t}_{\sigma_j,k}$ with $\calI^Z_{\sigma_j,k}$ and by Lemma \ref{lem:behaviormultiindexideals} we have exact sequences 
	\[\begin{tikzcd}
	0 & {\calJ^{\Op,t}_{\Sigma,\underline{k}}} & {\bigotimes^!_{j\in J}\Fun\left(\Op_{\gogl}(\overline{\sigma_j}^*)\right)} & {\bigotimes_{j \in J}\left( \Fun(\Op_{\gogl}(\overline{\sigma_j}^*))/\calJ^{\Op,t}_{\sigma_j,k_j} \right)} & 0 \\
	\\
	0 & {\calI^Z_{\Sigma,\underline{k}}} & {\bigotimes^!_{j \in J}Z_{\kappa_c}(\hat{\gog}_{\sigma_j})} & {\bigotimes_{j \in J}\left( Z_{\kappa_c}(\hat{\gog}_{\sigma_j})/\calI^Z_{\sigma_j,k_j}\right)} & 0
	\arrow[from=1-1, to=1-2]
	\arrow[from=1-2, to=1-3]
	\arrow[from=1-2, to=3-2]
	\arrow[from=1-3, to=1-4]
	\arrow[from=1-3, to=3-3]
	\arrow[from=1-4, to=1-5]
	\arrow[from=1-4, to=3-4]
	\arrow[from=3-1, to=3-2]
	\arrow[from=3-2, to=3-3]
	\arrow[from=3-3, to=3-4]
	\arrow[from=3-4, to=3-5]
\end{tikzcd}\]
	notice that these are also exact at the level of sections since we are dealing with \QCC sheaves on an affine scheme. The middle vertical map and the right vertical map are isomorphisms by the one section case (see Proposition \ref{cor:feiginfrenkel1section} and Lemma \ref{cor:comparisontopologiesonesection}); it follows that the left vertical map is an isomorphism as well. Since this holds for any choice of $\underline{k}$ it is a topological isomorphism as well so that $\Psi_{\Sigma}$ induces a topological isomorphism between $\calJ^{\Op,t}_{\Sigma,\underline{k}}$ and $\calI^Z_{\Sigma,\underline{k}}$ as claimed.
\end{proof}

\subsection{The factorizable Feigin-Frenkel center}\label{ssec:thefactorizablefeiginfrenkelcenter}

We apply the results obtained so far to prove Theorem \ref{thm:mainteointro}. Let $C$ be a smooth curve over $\mC$. We specialize our results to the universal families $X_J = C\times C^J$ over $S_J = C^J$ introduced in Remark \ref{rmk:factsigmaugualefactcurva}. We take as our $\Sigma$ the $J$-collection of sections:

\[
	\sigma^{\univ}_{C,J,j} : S_J = C^J \to C\times C^J = X_J \qquad (x_{j'})_{j' \in J} \mapsto (x_j, (x_{j'})_{j' \in J}).
\]
we denote this collection by $\Sigma^{\univ}_{C,J}$.

Definition \ref{def:constructiongogsigma} provides us with a sheaf of Lie algebras (in the $\Sigma$ notation $\hat{\gog}_{\Sigma,\kappa_c}$) on $C^J$ which we denote by $\hat{\gog}_{C^J,\kappa_c}$. Let $Z_{\kappa_c}(\hat{\gog}_{C^J})$ the center of the sheaf of complete enveloping algebras of $\hat{\gog}_{C^J,\kappa_c}$. Recall as well the space of opers $\Op_{\gogl}(D^*)_{C^J}$, defined in Definition \ref{def:opersCI}.

\begin{theorem}\label{thm:teofinalefinale}
	For any finite set $J$ there is a canonical isomorphism
	\[
		Z_{\kappa_c}(\hat{\gog}_{C^J}) = \Fun \left( \Op_{\gogl}(D^*)_{C^J} \right)
	\]
	which is compatible with the factorization structure on both spaces.
\end{theorem}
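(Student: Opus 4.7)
The plan is to deduce this statement as a direct specialization of Theorem \ref{thm:teofinale1} to the universal geometric setting $S_J = C^J$, $X_J = C \times C^J$ equipped with the canonical collection of sections $\Sigma^{\univ}_{C,J}$. Once the hypotheses of Theorem \ref{thm:teofinale1} are checked and the objects on both sides are identified, the isomorphism and its factorization compatibility are obtained essentially for free.

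First I would verify the geometric hypotheses. The scheme $S_J = C^J$ is integral (since $C$ is a smooth curve over $\mC$, hence irreducible and reduced, and products of integral schemes over $\mC$ remain integral) and quasi-separated (in fact separated). For any surjection $J \twoheadrightarrow I$, the closed subscheme $V_{J/I} \subset C^J$ defined by $\sigma_{j_1} = \sigma_{j_2}$ whenever $\pi(j_1) = \pi(j_2)$ is the corresponding partial diagonal, canonically isomorphic to $C^I$ via the map induced by $\pi$. Hence $V_{J/I}$ is integral as well, and the hypotheses of Theorem \ref{thm:teofinale1} are satisfied.

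Next I would identify the objects on both sides. By construction $\hat{\gog}_{C^J}$ is defined as $\hat{\gog}_{\Sigma^{\univ}_{C,J},\kappa_c}$ in the sense of Definition \ref{def:constructiongogsigma}, so its completed enveloping algebra and associated center agree with $\calU_{\kappa_c}(\hat{\gog}_{\Sigma^{\univ}_{C,J}})$ and $Z_{\kappa_c}(\hat{\gog}_{\Sigma^{\univ}_{C,J}})$. On the oper side, Remark \ref{rmk:operCopersigma} gives a canonical identification
\[
    \Op_{\gogl}(D^*)_{C^J} = \Op_{\gogl}(\overline{\Sigma}^{\univ,*}_{C,J})
\]
as functors on $\Aff_{C^J}$, so that $\Fun(\Op_{\gogl}(D^*)_{C^J}) = \Fun(\Op_{\gogl}(\overline{\Sigma}^{\univ,*}_{C,J}))$ as complete topological sheaves on $C^J$. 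Theorem \ref{thm:teofinale1} then produces a canonical isomorphism
\[
    \Phi^\zeta_{\Sigma^{\univ}_{C,J}} \circ \gamma^{-1} : \Fun(\Op_{\gogl}(D^*)_{C^J}) \xrightarrow{\sim} Z_{\kappa_c}(\hat{\gog}_{C^J}),
\]
which is the desired canonical identification.

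Finally, compatibility with the factorization structures requires only unwinding the relevant notations. The factorization structure on $\Fun(\Op_{\gogl}(D^*)_{C^J})$ of Remark \ref{rmk:factorizationopers} is, under the identification above, the factorization structure on $\Fun(\Op_{\gogl}(\overline{\Sigma}^{\univ,*}_{C,J}))$ induced by the factorization properties of $\overline{\Sigma}^{\univ}_{C,J}$; similarly the factorization structure on $Z_{\kappa_c}(\hat{\gog}_{C^J})$ agrees by construction with the one of Proposition \ref{prop:factorizationcenter} for $\Sigma = \Sigma^{\univ}_{C,J}$. Both $\gamma$ and $\Phi^\zeta_\Sigma$ are compatible with the factorization structures (the former by Remark \ref{rmk:factorizationofgamma} and its globalization in the proof of Theorem \ref{thm:feiginfrenkelcasarinmaffei}, the latter by Proposition \ref{prop:factpropertiescalu}), so the composition $\Phi^\zeta_{\Sigma^{\univ}_{C,J}} \circ \gamma^{-1}$ is compatible as well. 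I do not expect a substantial obstacle here: all real work has been carried out in Theorems \ref{thm:teofinale1} and \ref{thm:feiginfrenkelcasarinmaffei}, and the only point of genuine care is bookkeeping to check that the canonical factorization structures on $C^J$ match those obtained abstractly from $\Sigma^{\univ}_{C,J}$, which is immediate from the definitions.
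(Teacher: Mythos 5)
Your overall approach matches the paper's: specialize Theorem \ref{thm:teofinale1} to $S_J = C^J$, $X_J = C \times C^J$ with the universal sections, identify opers via Remark \ref{rmk:operCopersigma}, and verify the hypotheses. The integrality check and the observation that $V_{J/I}$ is a partial diagonal isomorphic to $C^I$ are both correct and needed.

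However, there is a genuine gap in the final step, which you dismiss as bookkeeping "immediate from the definitions." The factorization structure produced by Theorem \ref{thm:teofinale1} (via Proposition \ref{prop:factorizationcenter} and Remark \ref{rmk:factorizationopers}) lives in the $X,S,\Sigma$ picture: for a surjection $J \twoheadrightarrow I$, the domain of $\fact_{J/I}$ is the completed tensor product $\hat{j}^*_{J/I}\bigotimes^!_{i\in I}$ of sheaves $\calA_{\Sigma_{J_i}}$ \emph{each living on all of $C^J$}. But the Beilinson--Drinfeld factorization algebra of Definition \ref{def:factorizationspace}, which is what the theorem actually asserts, requires the external product $\boxtimes^!_{i\in I}$ of sheaves $\calA_{C^{J_i}}$ on the individual factors $C^{J_i}$. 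Matching these two requires identifying $\calA_{\Sigma_{J_i}}$ with $\hat{\pi}_i^*\calA_{C^{J_i}}$ where $\pi_i : C^J \to C^{J_i}$ is the projection. For the oper side this is indeed immediate at the level of spaces. For the center side, you need $\hat{\pi}_i^* Z_{\kappa_c}(\hat{\gog}_{C^{J_i}}) \simeq Z\bigl(\hat{\pi}_i^*\calU_{\kappa_c}(\hat{\gog}_{\Sigma^{\univ}_{C,J_i}})\bigr)$, i.e.\ that taking the center commutes with pullback along $\pi_i$. This is not automatic: Remark \ref{rmk:centerandpullback} only gives a natural map (which is an isomorphism for open immersions, but $\pi_i$ is neither open nor closed). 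The paper handles this point by reducing to an affine $C = \Spec R$, writing $C^J = \Spec R^{\otimes J}$, and applying Lemma \ref{lem:isoprodcentro} with $\calB_1 = \calU_{\kappa_c}(\hat{\gog}_{\Sigma^{\univ}_{C,J_i}})$ and $\calB_2 = R^{\otimes J}$ to conclude that the center of a completed tensor product splits as a tensor product of centers. Your proposal would be complete if you added this step; as written, it skips a genuinely non-trivial commutation of the center with a non-open, non-closed base change.
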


\begin{proof}
	Recall that by Remark \ref{rmk:operCopersigma} in this case $\Op_{\gogl}(\overline{\Sigma}^{\univ}_{C,J})$ coincides with the space $\Op_{\gogl}(D^*)_{C^J}$. Notice in addition that the family $X_J \to S_J$ satisfies the hypothesis of Theorem \ref{thm:teofinale1} since, for any surjection $J \twoheadrightarrow I$, the closed subscheme $V_{J/I} \subset S_J = C^J$ identifies with the diagonal embedding $\Delta(J/I) : C^I \to C^J$. Then for any finite set $J$ we get a canonical isomorphism of QCC sheaves on $C^J$
    \[
        Z_{\kappa_c}\left(\hat{\gog}_{C^J}\right) = \Fun\left( \Op_{\check{\gog}}(D^*)_{C^J} \right),
    \]
    so that we only need to show that it compatible with the factorization structures. This is already part of the statement of Theorem \ref{thm:teofinale1}, but we need to translate our $X,S,\Sigma$ factorization notion to the usual one. So fix a surjection $J\twoheadrightarrow I$, recall Notation \ref{sssez:notazionidiagonali}, Definitions \ref{def:factorizationspace}, \ref{def:openclosedfactorization}, and Remark \ref{rmk:factsigmaugualefactcurva} and notice that the following:
    \begin{itemize}
        \item The closed subscheme $V_{J/I} \subset S_J = C^J$ corresponds to the diagonal immersion $\Delta(J/I) : C^I \subset C^J$ induced by $J \twoheadrightarrow I$. The collection of $I$ sections induced by $\Sigma^{\univ}_{C,J}$ is exactly $\Sigma^{\univ}_{C,I}$;
        \item The open subset $U_{J/I} \subset S_J$ corresponds to the complement of $\nabla(J/I) \subset C^J$ so that the two different notions of $U_{J/I}$ of notations \ref{sssez:art2notazionidiagonali} and \ref{ntz:factorization} actually coincide in this case. For any $i \in I$ the collection of sections $(\Sigma^{\univ}_{C,J})_{J_i} \subset \Sigma^{\univ}_{C,J}$ induced by $J_i \subset J$ corresponds to the pullback along $U_{J/I} \subset C^J \to C^{J_i}$ of the canonical set of sections $\Sigma^{\univ}_{C,J_i}$;
    \end{itemize}
    After this remark, the only difference between our notion of $X,S,\Sigma$ factorization structure and the one of Definition \ref{def:factorizationspace} concerns the maps $\fact$. Indeed, with respect with the $X,S,\Sigma$ factorization picture the factorization maps that we get are of the form
    \begin{align*}
        \fact^{\Op}_{J/I} &: \hat{j}^*_{J/I} \bigotimes^!_{i \in I} \Fun\left( \Op_{\check{\gog}}\left((\Sigma^{\univ}_{C,J})_{J_i}\right) \right) \to \hat{j}^*_{J/I} \Fun\left( \Op_{\check{\gog}}(D^*)_{C^I} \right) \\
        \fact^Z_{J/I} &: \hat{j}^*_{J/I} \bigotimes^!_{i \in I} Z_{\kappa_c}(\hat{\gog}_{(\Sigma^{\univ}_{C,J})_{J_i}}) \to \hat{j}^*_{J/I} Z_{\kappa_c}(\hat{\gog}_{C^I}) 
    \end{align*}
    while we are actually looking for maps of the form
    \begin{align*}
        \fact^{\Op}_{J/I} &: \hat{j}^*_{J/I} \boxtimes^!_{i \in I} \Fun\left( \Op_{\check{\gog}}(D^*)_{C^{J_i}} \right) \to \hat{j}^*_{J/I} \Fun\left( \Op_{\check{\gog}}(D^*)_{C^I} \right) \\
        \fact^Z_{J/I} &: \hat{j}^*_{J/I} \boxtimes^!_{i \in I} Z_{\kappa_c}(\hat{\gog}_{C^{J_i}}) \to \hat{j}^*_{J/I} Z_{\kappa_c}(\hat{\gog}_{C^I}) 
    \end{align*}
    so that the Theorem follows if we prove that, calling $\pi_i : C^J \to C^{J_i}$ the canonical projection, there are canonical isomorphisms
    \begin{align*}
        \hat{j}^*_{J/I} \Fun\left( \Op_{\check{\gog}}\left((\Sigma^{\univ}_{C,J})_{J_i}\right) \right) &= \hat{j}^*_{J/I}\hat{\pi}_i^*\Fun\left( \Op_{\check{\gog}}(D^*)_{C^{J_i}} \right), \\
        \hat{j}^*_{J/I}Z_{\kappa_c}(\hat{\gog}_{(\Sigma^{\univ}_{C,J})_{J_i}}) &= \hat{j}^*_{J/I}\hat{\pi}_i^*Z_{\kappa_c}(\hat{\gog}_{C^{J_i}}).
    \end{align*}
    The first claim easily follows by definition of the spaces $\Op_{\check{\gog}}\left((\Sigma^{\univ}_{C,J})_{J_i}\right)$ and $\Op_{\check{\gog}}(D^*)_{C^{J_i}}$. We claim that at the level of spaces we have $\Op_{\check{\gog}}\left((\Sigma^{\univ}_{C,J})_{J_i}\right)= \pi_i^ * \Op_{\gogl}(D^*)_{C^{J_i}}$, so that the dual statement on their algebra of functions immediately follows. Both of these are functors $\Aff^{\op}_{C^J} \to \Set$. The first one takes $\underline{r} = (r_j)_{j \in J} \in \Hom(\Spec R,C^J)$ to the set of isomorphism classes of opers over $\overline{\Sigma}^*_{J_i,\underline{r}}$, where the latter is constructed using the sections $r_j : \Spec R \to C \times \Spec R$ for $j \in J_i$. The same description is easily seen to hold for $\pi^*_i\Op_{\gogl}(D^*)_{C^{J_i}}$, since we first forget all $r_j$ except from those for $j \in J_i$ and then compute opers on $\overline{\Sigma}^*_{J_i,\underline{r}}$.
	
    As the second statement is concerned, we prove that $Z_{\kappa_c}(\hat{\gog}_{(\Sigma^{\univ}_{C,J})_{J_i}}) = \hat{\pi}_i^*Z_{\kappa_c}(\hat{\gog}_{C^{J_i}})$ as well.
    Notice that unravelling the definitions and by the pullback properties of $\calU_{\kappa_c}(\hat{\gog}_\Sigma)$ (see \cite[Proposition \ref{prop:pullbackgu}]{casmaffei1}) we have
    \[
        Z_{\kappa_c}(\hat{\gog}_{(\Sigma^{\univ}_{C,J})_{J_i}}) = Z\left( \hat{\pi}_i^*\calU_{\kappa_c}(\hat{\gog}_{\Sigma^{\univ}_{C,J_i}}) \right).
    \]
    Hence, we need to prove compatibility of the center under pullback. By Remark \ref{rmk:centerandpullback}, there is a natural morphism $\hat{\pi}^*_i Z_{\kappa_c}(\hat{\gog}_{C^{J_i}}) \to Z\left( \hat{\pi}_i^*\calU_{\kappa_c}(\hat{\gog}_{\Sigma^{\univ}_{C,J_i}}) \right)$ and we need to check that it is an isomorphism. We can assume that $C=\Spec R$ is affine. Then $C^J \simeq \Spec R^{\otimes J}$, $C^{J_i} =\Spec R^{\otimes J_i}$ so that the claim follows by Lemma \ref{lem:isoprodcentro}, applied to the case $A = R^{\otimes J_i}$, $\calB_1= \calU_{\kappa_c}(\hat{\gog}_{\Sigma^{\univ}_{C,J_i}})$ and $\calB_2 = R^{\otimes J}$.
\end{proof}

\appendix

\section{The Feigin-Frenkel center}\label{sec:feiginfrenkelclassical}

In this appendix we elaborate on the classical Feigin-Frenkel center, formulate it in its coordinate independent way and finally relate it to our constructions. 

\subsection{A quick overview}

Let us start by recalling the original statement of the Feigin-Frenkel Theorem, first in its vertex algebra version and then in its enveloping algebra version.

\begin{theorem}[\cite{feigin1992affine}]\label{thm:feiginfrenkelva}
	Assume that $\gog$ is a simple Lie algebra, let $\hat{\gog}_{\kappa_c}$ be the affine Lie algebra at the critical level and let $V^{\kappa_c}(\gog)$ be its attached affine vertex algebra. Then there is an $(\Aut O(\mC),\Der O)$-equivariant isomorphism
	\[
		\zeta(\hat{\gog}) \stackrel{\mathrm{def}}{=} \zeta\left(V^{\kappa_c}(\gog)\right) \simeq \mC[\Op_{\gogl}(D)],
	\]\index{$\zeta(\hat{\gog})$}
	where $\zeta\left(V^{\kappa_c}(\gog)\right)$ denotes the center of the vertex algebra.
\end{theorem}

\begin{theorem}[\cite{feigin1992affine}]\label{thm:feiginfrenkel}
	Assume that $\gog$ is a simple Lie algebra, let $\hat{\gog}_{\kappa_c}$ be the affine Lie algebra at the critical level and let $U_{\kappa_c}(\hat{\gog})$ be its completed enveloping algebra. There is an $(\Aut O (\mC),\Der O)$-equivariant isomorphism 
	\[
		Z_{\kappa_c}(\hat{\gog}) \stackrel{\mathrm{def}}{=} Z(U_{\kappa_c}(\hat{\gog})) = \mC[\Op_{\gogl}(D^*)],
	\]
	where $Z(U_{\kappa_c}(\hat{\gog}))$ is the center of the completed enveloping algebra.
\end{theorem}

The present exposition is organized with the goal of showing how to extend the original Feigin-Frenkel theorem to our $X,S,\Sigma$ setting in the case where $\Sigma$ consists of one single section. This is the content of Proposition \ref{cor:feiginfrenkel1section}; its proof is essentially a reformulation of the original proof of the Feigin-Frenkel Theorem in a slightly more general setting. In the following Remark we give a very brief overview of the proof of Feigin and Frenkel and then we expand it in the following Sections, comparing the constructions of Feigin and Frenkel with ours.

\begin{ntz}\label{ntz:changeffnotazione}
	In \cite[Section 3.2]{frenkel2007langlands} the authors construct, attached to a vertex algebra $V$, a Lie algebra $U(V)$ and a complete associative algebra $\tilde{U}(V)$. In order to better compare our constructions with theirs we take the freedom to change the notation and write
	\[
		\Lie_{\mathrm{FF}}(V) = U(V) \qquad \tilde{U}_{\mathrm{FF}}(V) = \tilde{U}(V).
	\]
\end{ntz}

\begin{remark}\label{rmk:classicalfeifreiso} 
The proof of Theorem \ref{thm:feiginfrenkel}, as described in \cite{frenkel2007langlands}, goes as follows (here we will use the notation contained in \emph{loc. cit.} with the changes of Notation \ref{ntz:changeffnotazione}).
\begin{enumerate}
    \item First one proves that the center of the vertex algebra $\zeta(\hat{\gog}) := \zeta(V^{\kappa_c}(\gog))$ is isomorphic to $\mC[\Op_{\gogl}(D)]$ in an $\Aut O(\mC)$-equivariant way. In particular, there is an $\Aut O(\mC)$-equivariant immersion $\mC[\Op_{\gogl}(D)] \subset V^{\kappa_c}(\gog)$;
    \item One moves on by considering the complete associative algebra $\tilde{U}_{\mathrm{FF}}(\mC[\Op_{\gogl}(D)])$, which parallels our $\calU_{\overline{\Sigma}^*}$ of Section \ref{sec:envelopingalgchiralalg} in the case where $S = \mC$ and $\Sigma$ consists of a single section (see Section \ref{sssec:comparisonconstructionsfrenkelcasmaf}). This is constructed as a modification of the completed enveloping algebra of the Lie algebra (see \cite[3.2.1]{frenkel2007langlands} and Notation \ref{ntz:changeffnotazione}) \[\Lie_{\mathrm{FF}}\left(\mC[\Op_{\gogl}(D)]\right) =  \mC[\Op_{\gogl}(D)] \otimes \mC((z)) /\text{Im}(T + \partial_z).\] 
    $\Lie_{\mathrm{FF}}\left(\mC[\Op_{\gogl}(D)]\right)$ comes equipped with a natural $\Aut O$ action on which we elaborate in Section \ref{ssec:comparisonautoactions}; There is a natural isomorphism $$\Phi_{\text{FF}} : \tilde{U}_{\mathrm{FF}}(V^{\kappa_c}(\gog)) \to \tilde{U}_{\kappa_c}(\hat{\gog})$$ with the completed enveloping algebra of $\hat{\gog}_{\kappa_c}$ which is automatically $\Aut O(\mC)$-equivariant. The latter comes from a canonical morphism $\Lie_{\mathrm{FF}}(V^{\kappa_c}(\gog)) \to {U}_{\kappa_c}(\hat{\gog})$ of Lie algebras (see \cite[3.2.2]{frenkel2007langlands} and Remark \ref{rmk:comparisonmaplieenveloping}). By functoriality of $\tilde{U}_{\mathrm{FF}}$ we may restrict the above morphism to a map $$\tilde{U}_{\mathrm{FF}}(\mC[\Op_{\gogl}(D)]) \simeq \tilde{U}_{\mathrm{FF}}(\zeta(\hat{\gog})) \to \tilde{U}_{\kappa_c}(\hat{\gog}),$$ which is easily checked to have image contained in the center $Z(\hat{\gog}) = Z(\tilde{U}_{\kappa_c}(\hat{\gog}))$ (this follows from the fact that the elements in $\mC[\Op_{\gogl}(D)]$ are all central in $V^{\kappa_c}(\gog)$). In particular one gets a morphism
	\[
		\Phi^\zeta_{\text{FF}} : \tilde{U}_{\mathrm{FF}}(\mC[\Op_{\gogl}(D)]) \to Z(\hat{\gog});
	\]
    \item The proof goes on following two different steps. Step $(a)$ is showing that the map $\Phi^\zeta_{\text{FF}} : \tilde{U}_{\mathrm{FF}}(\zeta(\hat{\gog})) \to Z(\hat{\gog})$ is an isomorphism. This is done by considering appropriate quotients of these complete algebras, some appropriate filtrations on them, and their associated graded spaces, using a bit of invariant theory;
    \item Step $(b)$ is concerned with establishing an $\Aut O(\mC)$-equivariant isomorphism \[\gamma_{\mathrm{FF}} : \tilde{U}_{\mathrm{FF}}(\mC[\Op_{\gogl}(D)]) = \mC[\Op_{\gogl}(D^*)];\]
    \item By considering $\Phi^\zeta_{\text{FF}}\gamma_{\mathrm{FF}}^{-1}$ one obtains an $\Aut O(\mC)$-equivariant isomorphism $\mC[\Op_{\gogl}(D^*)] = Z_{\kappa_c}(\hat{\gog})$. This is what we will be calling \emph{the Feigin-Frenkel isomorphism}.
\end{enumerate}
\end{remark}

\subsection{Comparison between our constructions with those of \texorpdfstring{\cite{frenkel2007langlands}}{Frenkel's "Langlands Correspondence for Loop Groups"}}\label{sssec:comparisonconstructionsfrenkelcasmaf}

Let us be more precise on how the constructions of the Lie algebra (recall Notation \ref{ntz:changeffnotazione}) $\Lie_{\mathrm{FF}}(V)$ \cite[3.2.2]{frenkel2007langlands} and our $\Lie_{\overline{\Sigma}^*}$ (see Sect. \ref{sec:envelopingalgchiralalg}), $\tilde{U}_{\mathrm{FF}}(V)$ of \cite[3.2.3]{frenkel2007langlands} and our $\calU_{\overline{\Sigma}^*}$ are related. Here, we are interested in the case where $S = \Spec \mC$ and where we have a privileged coordinate $z \in \pbarO$ (we call it $z$ instead of $t$ because $t$ will have a different role later on). We may therefore assume that $X = \mA^1_{\mC} = \Spec \mC[z]$ and that $\Sigma$ consists of the $0$ section. Topological sheaves on $S$ become $\mC$ topological vectors spaces, slightly abusing our notation, which we believe makes for a better understanding of the situation, we will write $\mC[[z]]$ in place of $\Sigma,\overline{\Sigma}$, so that for instance we will write $V^\kappa_{\mC[[z]]}(\gog)$ in place of the chiral algebra $\VkSg$.

Our geometric setting then boils down to study the space of fields $\mF^1 = \Homcont_{\mC}(\mC((z)),U_\kappa(\hat{\gog}))$, for $U_\kappa(\hat{\gog})$ the usual completed enveloping algebra of the affine algebra $\hat{\gog}_\kappa$. Our construction provide a canonically defined chiral algebra $V^\kappa_{\mC[[z]]}(\gog) \subset \Homcont_{\mC}(\mC((z)),U_\kappa(\hat{\gog}))$ over $\mC[[z]]$ and an isomorphism $\calY_z : V^\kappa(\gog) \otimes \mC[[z]] \to V^\kappa_{\mC[[z]]}(\gog) \subset \mF^1$, which is essentially a reformulation of the usual state/field correspondence (see Section \ref{ssec:recollectionsysigma}).
    
The output of the constructions of \cite{frenkel2007langlands} and our constructions is the same. The main difference between them is the kind of input: the former takes vertex algebras as input, while the latter takes chiral algebras over $\mC[[z]]$ as input. 

Recall that by \cite[Remark \ref{rmk:extvoaoslin}]{casmaffei1} to any vertex algebra $V$ there is a canonically attached chiral algebra $V \otimes {\mC[[z]]}$, with the right action of $\partial_z$ given by $(v\otimes f)\partial_z = -Tv \otimes f - v \otimes \partial_zf$. It is then evident that for any vertex algebra $V$ we have
\[
    \Lie_{\mathrm{FF}}\left( V \right) = \frac{V \otimes \mC((z))}{\text{Im}(T + \partial_z)} = h^0\left( V\otimes\mC((z)) \right) = \Lie_{\mC((z))}\left(V \otimes {\mC[[z]]}\right)
\]

We move on restricting to the case where $V = V^\kappa(\gog)$ comparing it with our canonically defined chiral algebra $V^\kappa_{\mC[[z]]}(\gog)$. As explained above, the choice of the coordinate $z$ induces an identification $\calY_z : V^\kappa(\gog) \otimes \mC[[z]] \simeq V^\kappa_{\mC[[z]]}(\gog)$ (c.f. Section \ref{ssec:richiamiVkSg} and \cite[Theorem \ref{teo:descrizionelocaleVkSg}]{casmaffei1}) so that we get an isomorphism
\begin{equation}\label{eq:isoliefrenkelcasmaffei}
    \Lie_{\mathrm{FF}}(V^\kappa(\gog)) = \Lie_{\mC((z))}\left( V^\kappa_{\mC[[z]]}(\gog)\right)
\end{equation}

\begin{remark}\label{rmk:comparisonmaplieenveloping}
    One can check that the canonical morphism of Lie algebras $\Lie_{\mathrm{FF}}(V^\kappa(\gog)) \to {U}_\kappa(\hat{\gog})$ of \cite[Prop. 3.2.1]{frenkel2007langlands} and \cite[Prop. 4.2.2]{frenkel2004vertex} identifies with our $\beta : \Lie_{\mC((z))}(V^\kappa_{\mC[[z]]}(\gog)) \to U_{\kappa}(\hat{\gog}_{\mC((z))})$ via the identification of Equation (\ref{eq:isoliefrenkelcasmaffei}).
\end{remark}

Having identified the Lie algebras $\Lie_{\mathrm{FF}}(V^\kappa(\gog))$ and $\Lie_{\mC((z))}\left( V^\kappa_{\mC[[z]]}(\gog) \right)$ we can move forward to identify the complete associative algebras $\tilde{U}_{\mathrm{FF}}(V^\kappa(\gog))$ and  $\calU_{\mC((z))}\left( V^\kappa_{\mC[[z]]}(\gog) \right)$. Both of these are obtained in steps:
\begin{enumerate}
    \item Taking the enveloping algebras of the Lie algebras $\Lie_{\mathrm{FF}}(V^\kappa(\gog)) = \Lie_{\mC((z))}V_{\mC[[z]]}^\kappa(\gog)$;
    \item Considering a suitable completion of the latter associative algebras. 
        \begin{enumerate}
        \item For the construction of $\tilde{U}_{\mathrm{FF}}$, in \cite{frenkel2007langlands}, the topology considered on the enveloping algebra of $\Lie_{\mathrm{FF}}(V^\kappa(\gog))$ is the one generated by the left ideals generated by  the subspaces $I_N = \{ [A\otimes t^n] \text{ such that } n \geq N + \deg A \}$, where $A \in V^\kappa(\gog)$, $\deg A$ is the degree with respect of the standard grading of $V^\kappa(\gog)$ while $[A\otimes t^n]$ denotes the class of $A \otimes t^n$ in $U(V^\kappa(\gog))$; 
        \item For our construction $\calU_{\mC((z))}$ we consider the subspace topology of $V^\kappa(\gog) \otimes \mC[[z]] \subset \mF^1$;
        \end{enumerate}
    These topologies are equivalent; thus our completed associative algebra $\calU^1_{\mC((z))}\left(V^\kappa_{\mC[[z]]}(\gog)\right)$ (see Section \ref{ssec:envelopingalgchiralalg}) identifies with Frenkel's $\tilde{U}(\Lie_{\mathrm{FF}}(V^\kappa(\gog)))$;
    \item Factoring out some relations
        \begin{enumerate}
            \item In the construction of $\tilde{U}_{\mathrm{FF}}$, as in \cite[3.2.3]{frenkel2007langlands}, one factors out the relations given by the Fourier coefficients of the series
            \[
                Y[A_{(-1)}B,z] - :Y[A,z]Y[B,z]:
            \]
            we refer to \emph{loc. cit.} for the notation $Y[A,z]$ and for the normally ordered product. In \emph{loc. cit.} it is also claimed that factoring out by these relations preserves completeness of the enveloping algebra;
            \item For our construction $\calU_{\mC((z))}$ we factor out $\calU^1_{\mC((z))}$ by the relations coming from diagram \eqref{eq:diagramcoeq}. Following our notation it is possible to recover the formal power series $Y[A,z]$ as the field $\Psi(A)$ (c.f. Section \ref{ssec:envelopingalgchiralalg}). This follows by embedding $$\Homcont(\mC((z)),\tilde{U}(\Lie_{\mathrm{FF}}(V^\kappa(\gog)))) \subset \tilde{U}(\Lie_{\mathrm{FF}}(V^\kappa(\gog)))[[z^{\pm 1}]]$$ (via the map $\varphi \mapsto \sum_{n \in \mZ} \varphi(z^n)z^{-n-1}$) and then noticing that $Y[A,z]$ and $\Psi(A)$ are defined by the same inductive formulas.
			
			Frenkel's relations then boil down as 
            \[
                \ev_1 \circ \left( \Delta_!\Psi\circ\mu_{\calV_{\overline{\Sigma}^*}} - \mu_{\calW}\circ(\Psi\boxtimes\Psi)\right) \left(A \boxtimes B \frac{1}{t_1 - t_2}\right).
            \]
			This is just a matter of unravelling the construction of $\Psi$ and relate it to Feigin and Frenkel's constructions.
        \end{enumerate}
\end{enumerate}

\noindent It follows that there is a natural, continuous morphism of complete associative algebras
\begin{equation}\label{eq:morphenvalgsvertexchiral}
    \tilde{U}_{\mathrm{FF}}\left(V^\kappa(\gog)\right) \to \calU_{\mC((z))}\left( V^\kappa_{\mC[[z]]}(\gog)\right)
\end{equation}
and that post-composing with our $U(\beta)=\Phi : \calU_{\mC((z))}\left( V^\kappa_{\mC[[z]]}(\gog)\right) \to U_{\kappa}(\hat{\gog})$ we recover the canonical morphism of \cite[Lemma 3.2.2]{frenkel2007langlands}. Since the latter is a topological isomorphism, as our $U(\beta)$, it follows that the canonical map \eqref{eq:morphenvalgsvertexchiral} is a topological isomorphism, so that the constructions $\tilde{U}_{\mathrm{FF}}$ and $\calU_{\mC((z))}$ may be identified in this case. This implies the following remark as well. 

\begin{remark}\label{rmk:comparisonutildePHI}
    The canonical isomorphism of complete associative algebras $$\tilde{U}_{\mathrm{FF}}(V^\kappa(\gog)) \simeq \calU_{\mC((z))}\left( V^\kappa_{\mC[[z]]}(\gog) \right)$$ restricts to a canonical identification between $\tilde{U}_{\mathrm{FF}}(\mC[\Op_{\gogl}(D)])$ and $\calU_{\mC((z))}\left( \zeta^{\kappa_c}_{\mC[[z]]}(\gog) \right)$. Under this identification, the morphism $\Phi$ of \ref{coro:morphismbtwcenter} coincides with the morphism $\Phi_{\text{FF}}$ considered in \cite[4.3.2]{frenkel2007langlands}. This can be checked by choosing a coordinate and using the description $\tilde{U}_{\mathrm{FF}}(\mC[\Op_{\gogl}(D)]) \simeq \overline{\Sym}_{\mC}((\check{V}^{\can})^* \otimes \mC((t)))\simeq \calU_{\mC((z))}\left( \zeta^{\kappa_c}_{\mC[[z]]}(\gog) \right)$ on which we will expand in Section \ref{sssec:moreongamma}.
\end{remark}

\subsubsection{Comparison of \texorpdfstring{$\Autpiu{} O$}{Aut O} actions}\label{ssec:comparisonautoactions}

In \cite{frenkel2007langlands} the construction $\Lie_{\mathrm{FF}}(V)$, whenever $V$ is a quasi-conformal vertex algebra (so it is equipped with operators $L_k$ which satisfy the axioms of \cite[6.2.4]{frenkel2007langlands}), comes equipped with a $\Der O = \mC[[z]]\partial_z$ action defined by the formula
\begin{equation}\label{eq:ffderoaction}
        (-f\partial_z) \cdot_{\text{FF}} [v \otimes g] = \sum_{k \geq -1} \frac{1}{(k+1)!} [ L_kv \otimes g\partial_z^{k+1}f],
\end{equation}
where $[v\otimes f] \in U(V) = (V \otimes \mC((z)))((\mathrm{Im}(T + \partial_z)))$ stands for the class of the element $v \otimes f \in V \otimes \mC((z))$. This induces an action on $\tilde{U}(V)$ as well.

Here, in the case where $V = V^\kappa(\gog)$, we show that the above action, when restricted to $\Der^0 O$, actually comes from an action of $\Autpiu{} O$ on $V^\kappa(\gog) \otimes \mC((z))$. We will do this by studying the action of coordinate changes which our constructions produce and then show that the differential of the latter coincides with the action of formula (\ref{eq:ffderoaction}).

\medskip

The constructions of \cite{casmaffei1} are canonical and don't need the choice of a coordinate $z$ to be defined, so let us use the following terminology: let $\calO$ be a topological ring over $\mC$ isomorphic to $\mC[[z]]$ (such as the completion of the local ring at any point on a smooth curve over $\mC$) and let $\calK \simeq \mC((z))$ be its field of fractions. The set of coordinates on $\calO$ (i.e. those elements $t \in \calO$ which induce an isomorphism $\calO \simeq \mC[[t]]$) is naturally an $\Autpiu{} O$-torsor, so that for any $\tau \in \Autpiu{} O$, the element $\tau(t) \in \calO$ is well defined and a coordinate. Let $\hat{\gog}_{\calK,\kappa}$ be the version of the affine algebra constructed replacing $\mC((z))$ with $\calK$ and let $U_\kappa(\hat{\gog}_\calK)$ be its completed enveloping algebra. In \emph{loc. cit.} a canonical chiral algebra over $\calO$, to be denoted in what follows by $V^\kappa_{\calO}(\gog) \subset \mF^1_{\calK,\gog} = \Homcont_{\mC}(\calK,U_\kappa(\hat{\gog}_{\calK}))$ is constructed. The choice of any coordinate $t \in \calO$ induces an isomorphism $\calY_t : V^\kappa(\gog) \otimes \calO \to V^\kappa_{\calO}(\gog)$, the natural extension $\calY_t : V^\kappa(\gog) \otimes \calK \to \mF^1_{\calK,\gog}$ remains injective. Let $\phi_t : \mC((z)) \to \calK$ denote the isomorphism induced by the choice of the coordinate $t$ and $\tilde{\calY}_t : V^\kappa(\gog) \otimes \mC((z)) \to \mF^1_{\calK,\gog}$ be the composition $\tilde{\calY}_t = \calY_t \circ (\id \otimes \phi_t)$. Via these isomorphisms we get a commutative diagram
\[\begin{tikzcd}
	{V^\kappa(\gog) \otimes\mC((z))} && {V^\kappa_{\calK,\gog}} \\
	\\
	{\Homcont_{\mC}\left(\mC((z)),U_\kappa(\hat{\gog})\right)} && {\mF^1_{\calK,\gog}}
	\arrow["{\tilde{\calY}_t}", from=1-1, to=1-3]
	\arrow["{\calY_{\mathrm{FF}}}"', hook, from=1-1, to=3-1]
	\arrow[hook, from=1-3, to=3-3]
	\arrow["{\ad_{\phi_t}}"', from=3-1, to=3-3]
\end{tikzcd}\]
Here $\calY_{\mathrm{FF}}$ is a slight modification of the usual state/field correspondence (see Section \ref{ssec:recollectionsysigma}); to be more precise, we have $\calY_{\mathrm{FF}}(v\otimes f(z)) = \left( g(z) \mapsto \Res_z\left(g(z)f(z)\cdot Y[v,z]\right)\right)$ (see point 3.b of Proposition \ref{rmk:comparisonmaplieenveloping}); while $\ad_{\phi_t}$ is the isomorphism induced by the identifications $\phi_t : \mC((z)) \to \calK$, $\phi^\gog_t : \tilde{U}_\kappa(\hat{\gog}) \to \tilde{U}_\kappa(\hat{\gog}_{\calK})$. The commutativity of the above diagram implies that, for any $\tau \in \Autpiu{} O$, the automorphism $\tilde{\calY}_{\tau} = \tilde{\calY}_{t}^{-1}\circ\tilde{\calY}_{\tau(t)} : V^\kappa(\gog) \otimes \mC((z)) \to V^\kappa(\gog) \otimes \mC((z))$ coincides with the restriction of action of $\tau \in \Autpiu{} O$ on $\Homcont_{\mC}\left(\mC((z)),U_\kappa(\gog)\right)$ by conjugation along the embedding $\calY_{\mathrm{FF}} : V^\kappa(\gog) \otimes \mC((z)) \hookrightarrow \Homcont_{\mC}\left(\mC((z)),U_\kappa(\gog)\right)$ (for more details see the argument applied in \cite[Lemma \ref{lem:groupactions}]{casmaffei1}).

Let us make an example on how this action by conjugation works: fields $f(z) \mapsto X \otimes f(z)$, attached to elements $X \in \gog$ are invariant; while the residue distribution $f(z) \mapsto \Res_z (f(z)dz)$ satisfies, for $\tau \in \Autzero{} O$, $(\tau \cdot \Res) = \Res \cdot (\partial_z\tau(z))$, where the latter is the distribution $f(z) \mapsto \Res(f(z)\partial_z\tau(z)dz)$. For an arbitrary element $v \otimes f(z) \in V^\kappa(\gog) \otimes \mC((z))$ we don't have a general formula though. What we can compute is the differentiation of this action, as the following proposition shows. 

\begin{proposition}
	Let $L^{\calY}_f$ be the operator on $V^\kappa(\gog) \otimes \mC((z))$ attached to $f\partial_z$ under the differentiation of the action of $\tilde{\calY}_{\tau}$. Then
	\begin{equation}\label{eq:ourderoaction}
		L^\calY_f(v \otimes g) = -\sum_{k\geq 0} \frac{1}{(k+1)!}(L_kv) \otimes g\partial_z^{k+1}f + v\otimes(\partial_z(fg)).
	\end{equation}
\end{proposition}

\begin{proof}
	Recall that by \cite[Lemma \ref{lem:groupactions} and Proposition \ref{prop:deroactionpsi}]{casmaffei1}, we have that $L^{\calY}_f$ is sesquilinear with respect $\mC((z))$, so that $L^{\calY}_f(v\otimes g) = g \cdot L^\calY_f(v \otimes 1) + v\otimes f\partial_zg$, and that
	\[
		[L^{\calY}_f,X_{(m)}] = m\sum_{k \geq 0} \frac{1}{(k+1)!}X_{(k+m)}\otimes \partial_z^{k+1}f,
	\]
	for any $X \in \gog$. In addition, it follows by $\tau \cdot \Res(\_dz) = \Res(\_dz) \cdot \partial_z\tau(z)$, that $L^{\calY}_f(\vac \otimes 1) = \vac \otimes \partial_zf$. 

	Let $\tilde{L}^{\calY}_f$ be the operator appearing in the right hand side of Equation (\ref{eq:ourderoaction}). It is easy to check that $\tilde{L}^{\calY}_f(v\otimes g) = g \cdot \tilde{L}^\calY_f(v \otimes 1) + v\otimes f\partial_zg$, that $L^{\calY}_f(\vac) = \tilde{L}^{\calY}_f(\vac)$ and that the same commutation relations  $[\tilde{L}^{\calY}_f,X_{(m)}]=[L^{\calY}_f,X_{(m)}]$ hold for any $X \in \gog$ (this follows by $[L_k,X_{(m)}] = -mX_{(k+m)}$). Since the operators $X_{(m)}$ generate $V^\kappa(\gog)$ it then follows that $L^{\calY}_f = \tilde{L}^{\calY}_f$ as desired.
\end{proof}

\begin{corollary}
	The $\Der^0 O$ action on $\Lie_{\mathrm{FF}}(V^\kappa(\gog))$ appearing in \cite{frenkel2007langlands} (see Equation (\ref{eq:ffderoaction})) comes from an action of the group $\Autpiu{} O$. The latter agrees with the action ${\calY}_\bullet$ of \cite[Lemma \ref{lem:groupactions}]{casmaffei1}. 
\end{corollary}

\begin{proof}
	We just need to check that
	\[
		[L^{\calY}_{-f}(v \otimes g)] = (-f\partial_z)\cdot_{\mathrm{FF}}[v \otimes g].
	\]
	This follows  by directly comparing Equations (\ref{eq:ffderoaction}) and (\ref{eq:ourderoaction}) and by the fact that in $\Lie_{\mathrm{FF}}(V^{\kappa}(\gog))$ we have $[L_{-1}v \otimes fg] = -[v \otimes \partial_z(fg)]$.
\end{proof}

\subsection{More on \texorpdfstring{$\gamma$}{gamma}}\label{sssec:moreongamma}

We also need to recall how the isomorphism $\gamma : \tilde{U}(\mC[\Op_{\gogl}(D)]) \simeq \mC[\Op_{\gogl}(D^*)]$ is constructed in \cite{frenkel2007langlands}. In order to do this we need to introduce some notations. When writing $\Op_{\gogl}(D)$ we think the disk $D$ with a specified coordinate $z$, so that $\calO_D = \mC[[z]]$ and $\calO_{D^*} = \mC((z))$. Recall that thanks to the canonical form of opers (see Proposition \ref{prop:art2descrlocopervcan}) there are isomorphisms of functors of $\mC$-commutative algebras
\[
    \chi = \chi_z : \Op_{\gogl}(D) \simeq J\check{V}^{\can}, \qquad \chi = \chi_z : \Op_{\gogl}(D^*) \simeq L\check{V}^{\can},
\]
where $J,L$ denote the Jet and Loop constructions, so that $J\check{V}^{\can}(R) = \Vcan \otimes R[[z]]$ and $L\check{V}^{\can} (R) = \check{V}^{\can}\otimes R((z))$. To describe functions on opers we describe functions on $J\check{V}^{\can},L\check{V}^{\can}$, notice that there is a natural closed embedding $J\check{V}^{\can} \subset L\check{V}^{\can}$ which induces a surjection $\Fun(L\check{V}^{\can}) \twoheadrightarrow \Fun(J\check{V}^{\can})$, the isomorphisms $\chi$ intertwine between this embedding and the natural embedding $\Op_{\gogl}(D) \subset \Op_{\gogl}(D^*)$.

Consider the residue pairing $\Res : \mC((z)) \times \mC((z))dz \to \mC$, an element $gdz \in \mC((z))dz$ and an element $h \in (\check{V}^{\can})^*$. Attached to this data there is a canonically defined function on $L\check{V}^{\can}$:
\[
    h\otimes gdz : L\check{V}^{\can} \to \mA^1 \qquad \check{V}^{\can} \otimes R((z)) \ni v \otimes f \mapsto h(v)\Res(fgdz) \in R.
\]
This allows us to construct a map $(\check{V}^{\can})^* \otimes \mC((z))\to \Fun( L\check{V}^{\can})$ which induces isomorphisms
\[
    \overline{\Sym}_{\mC}\left( (\check{V}^{\can})^* \otimes \mC((z)) \right) = \Fun (L\check{V}^{\can}), \qquad  \Sym\left( (\check{V}^{\can})^* \otimes \frac{\mC((z))}{\mC[[z]]} \right) = \Fun(J\check{V}^{\can}).
\]
Here $\overline{\Sym}_{\mC}\left( (\check{V}^{\can})^* \otimes \mC((z)) \right)$ stands for the completion of $\Sym\left( (\check{V}^{\can})^* \otimes \mC((z)) \right)$ along the topology generated by the ideals $\left((\check{V}^{\can})^*\otimes z^n\mC[[z]]\right)$. By considering the isomorphisms $\chi$ with opers, we get isomorphisms
\begin{align*}
    \chi^* &:\overline{\Sym}_{\mC}\left( (\check{V}^{\can})^* \otimes \mC((z)) \right) \simeq \mC[\Op_{\gogl}(D^*)], \\ \chi^* &: \Sym\left( (\check{V}^{\can})^* \otimes \frac{\mC((z))}{\mC[[z]]} \right) \simeq \mC[\Op_{\gogl}(D)].
\end{align*}
By taking the restriction of $\chi^*$, we get a natural embedding
\[
    \chi^*_{V} : (\check{V}^{\can})^* \to \mC[\Op_{\gogl}(D)] \qquad  h \mapsto \chi^* \left(h \otimes \frac{1}{z} \right),
\]
which induces a morphism
\[
	\chi^*_U : (\check{V}^{\can})^* \otimes \mC((z)) \to \mC[\Op_{\gogl}(D)]\otimes \mC((z)) \to \Lie_{\mathrm{FF}}(\mC[\Op_{\gogl}(D)]) \to \tilde{U}_{\mathrm{FF}}\left(\mC[\Op_{\gogl}(D)]\right).
\]
Since $\mC[\Op_{\gogl}(D)]$ is isomorphic, as a plain vertex algebra, to $V^0((\check{V}^{\can})^*)$ (and this isomorphism is compatible with the immersions of $(\check{V}^{\can})^*$ on both spaces) the above map upgrades to an isomorphism
\[
	\chi^*_U : \overline{\Sym}_{\mC}\left((\check{V}^{\can})^*\otimes\mC((z))\right) \to \tilde{U}_{\mathrm{FF}}\left(\mC[\Op_{\gogl}(D)]\right).
\]

Part of the Feigin-Frenkel Theorem is the following Proposition, which is claimed in \cite[Lemma 4.3.5]{frenkel2007langlands}. In the following statement we stress the importance of $\Aut O$ equivariance.

\begin{proposition}\label{prop:gammainvariantclassicalfeifre}
    The isomorphism
    \[
        \gamma_{\mathrm{FF}} = \chi^* \circ (\chi^*_{U})^{-1} : \tilde{U}_{\mathrm{FF}}\left( \mC[\Op_{\gogl}(D)] \right) \to \overline{\Sym}_A\left((\check{V}^{\can})^*\otimes\mC((z))\right) \to \mC[\Op_{\gogl}(D^*)]
    \]
    is $\Aut O$ equivariant.
\end{proposition}

\begin{proof}
    Also in this case one can prove that $\Aut O(\mC) = \Autpiu{} O (\mC)$ equivariance is equivalent to $\Der^0 O$ equivariance. The latter can be found for instance, in the particular case of $n=1$, in Proposition 6.3.1 of \cite{cas2023}.
\end{proof}

We move forward to compare the isomorphism $\gamma_{\mathrm{FF}}$ with our constructions. First, notice that via the identifications 
\begin{align*}
	&\Lie_{\mathrm{FF}}(\mC[\Op_{\gogl}(D)]) = \Lie_{\mC((z))}\left(\mC[\Op_{\gogl}(D)]\otimes\mC[[z]]\right) \\ &\tilde{U}_{\mathrm{FF}}\left(\mC[\Op_{\gogl}(D)]\right) = \calU_{\mC((z))}\left(\mC[\Op_{\gogl}(D)]\otimes\mC[[z]] \right)
\end{align*}
following the construction $\chi^*_U$ we may define an analogue of the isomorphism $$\chi^*_U : \overline{\Sym}_{\mC}\left( (\check{V}^{\can})^*\otimes\mC((z)) \right) \to \calU_{\mC((z))}\left(\mC[\Op_{\gogl}(D)]\otimes\mC[[z]]\right)$$ and translate Proposition \ref{prop:gammainvariantclassicalfeifre} as follows.
\begin{corollary}[Of Prop. \ref{prop:gammainvariantclassicalfeifre}]\label{coro:gammainvariantfeifrecasmaf}
    The isomorphism
    \[
        \tilde{\gamma}_{\mathrm{FF}} = \chi^* \circ (\chi^*_{U})^{-1} : \calU_{\mC((z))}\left( \mC[\Op_{\gogl}(D)]\otimes \mC[[z]] \right) \to \overline{\Sym}_A\left((\check{V}^{\can})^*\otimes\mC((z))\right) \to \mC[\Op_{\gogl}(D^*)]
    \]
    is $\Aut O$ equivariant, where the action of $\Aut O$ on the left hand side comes from the action of $\calY_\bullet$ of \cite[Lemma \ref{lem:groupactions}]{casmaffei1}.
\end{corollary}

We now move on and upgrade the above corollary to its coordinate free version, relating it to our chiral algebra construction as well. In order to do so we will need to consider both Lie algebras $\gog,\gogl$ at the same time.

Recall the notation we introduced in the second paragraph of Section \ref{ssec:comparisonautoactions}, so fix a complete topological ring $\calO$ (isomorphic to $\mC[[z]]$) and let $\calK$ be its field of fractions. Let $V^\kappa_{\calO}(\gog) \subset \Homcont(\calK,U_\kappa(\hat{\gog}_\calK))$ be our canonically defined chiral algebra and let $\zeta^{\kappa_c}_{\calO}(\hat{\gog})$ be its center. Consider $\Op_{\gogl}(\Spec \calK)$, the space of $\gogl$-opers over $\Spec \calK$; the choice of a coordinate $t \in \calO$ (i.e. a function which induces a topological isomorphism $\phi_t : \mC[[z]] \simeq \calO, (z \mapsto t)$) induces isomorphisms 
\begin{align*}
	\tilde{\calY}_t &: \mC[\Op_{\check{\gog}}(D)]\otimes\mC[[z]] = \zeta^{\kappa_c}(\hat{\gog}) \otimes \mC[[z]] \simeq \zeta^{\kappa_c}_{\calO}(\gog); \\
	\calU(\tilde{\calY}_t) &: \calU_{\mC((z))}\left( \mC[\Op_{\gogl}(D)]\otimes \mC[[z]] \right) \to \calU_{\calK}\left( \zeta_{\calO}^{\kappa_c}(\hat{\gog}) \right); \\
	\phi^{\Op}_t &: \mC\left[\Op_{\gogl}(\Spec \calK)\right] \to \mC\left[\Op_{\gogl}(D^*)\right].
\end{align*}
	
\begin{proposition}\label{prop:gammaisoFFnocoord}
	There exists a canonical isomorphism
	\[
		\gamma_{\calK} : \calU_{\calK} \left( \zeta^{\kappa_c}_{\calO}(\gog) \right) \to \mC\left[\Op_{\gogl}(\Spec \calK)\right].
	\]
	For any coordinate $t \in \calO$ the following diagram commutes:
	\[\begin{tikzcd}
		{\calU_{\calK}(\zeta^{\kappa_c}_{\calO}({\gog}))} && {\mC\left[ \Op_{\gogl}(\Spec \calK) \right]} \\
		\\
		{\calU_{\mC((z))}\left(\mC[\Op_{\gogl}(D)]\otimes\mC[[z]]\right)} && {\mC\left[\Op_{\gogl}(D^*)\right]}
		\arrow["\gamma_{\calK}", from=1-1, to=1-3]
		\arrow["{\calU_{\calK}(\tilde{\calY}_t)}", from=3-1, to=1-1]
		\arrow["{\phi_t^{\Op}}"', from=1-3, to=3-3]
		\arrow["{\tilde{\gamma}_{\mathrm{FF}}}", from=3-1, to=3-3]
	\end{tikzcd}\]
\end{proposition}

\begin{proof}
	All the maps $\calU_{\calK}(\tilde{\calY}_t),\tilde{\gamma}_{\mathrm{FF}}, \phi^{\Op}_t$ are isomorphisms, so, in order to prove both claims, it is enough to show that given any two coordinates $t,s$ the compositions \[(\phi_t^{\Op})^{-1}\circ \tilde{\gamma}_{\mathrm{FF}}\circ (\calU_{\calK}(\tilde{\calY}_t))^{-1} = (\phi_s^{\Op})^{-1}\circ \tilde{\gamma}_{\mathrm{FF}}\circ (\calU_{\calK}(\tilde{\calY}_s))^{-1} \] coincide. In order to do so write $s = \tau(t)$ for some $\tau \in \Autpiu{} O$ and rephrase the above equation as
	\[
		\tilde{\gamma}_{\mathrm{FF}} \circ \left( \calU_{\calK}(\tilde{\calY}_t^{-1}\tilde{\calY}_{\tau(t)} ) \right) = \left( \phi_t^{\Op}(\phi_{\tau(t)}^{\Op})^{-1} \right) \circ \tilde{\gamma}_{\mathrm{FF}}.	\]
	Finally notice that $\calU_{\calK}(\tilde{\calY}_t^{-1}\tilde{\calY}_{\tau(t)} )$ coincides with the action induced by $\calY_{\tau}$ of \cite[Lemma \ref{lem:groupactions}]{casmaffei1}, while $\phi_t^{\Op}(\phi_{\tau(t)}^{\Op})^{-1}$ coincides with the action of $\tau$ by coordinate changes on $\mC[\Op_{\gogl}(D^*)]$. After these remarks, the claim of the Proposition directly follows from Corollary \ref{coro:gammainvariantfeifrecasmaf}.
\end{proof}

We will need some more remarks about how the isomorphism $\gamma_{\calK}$ of Proposition \ref{prop:gammaisoFFnocoord} is related to the isomorphisms $\chi^*,\chi^*_U$ introduced at the beginning of Section \ref{sssec:moreongamma}. In order to do so, we need to introduce the analogues of $\chi^*,\chi^*_U$ in the $\calO,\calK$ setting. 
\begin{itemize}
	\item  We may rephrase the construction of $\chi^*_t$, obtaining an isomorphism
	\[
		\chi^*_t : \overline{\Sym}_{\mC} \left( (\check{V}^{\can})^*\otimes\calK\right) \to \mC[\Op_{\gogl}(\Spec \calK)].
	\]
	Here is a brief review on how to construct it: first consider the functor $L_\calK(\check{V}^{\can})^*(R) = \check{V}^{\can} \otimes (R\otimesr\calK)$. Then one can show that after the choice of a coordinate $t \in \calO$, the canonical form of opers (see Proposition \ref{prop:art2descrlocopervcan}) induces an isomorphism $\Op_{\gogl}(\Spec \calK) \simeq L_{\calK}(\check{V}^{\can})^*$, while  the residue pairing $\Res : \calK \times \calK dt \to \mC$ induces an isomorphism $\overline{\Sym}_{\mC}\left((\check{V}^{\can})^*\otimes \calK\right) \simeq \mC[L_{\calK}(\check{V}^{\can})^*]$ as in the fixed coordinate setting. Then $\chi^*_t$ is correct combination of these isomorphisms. Equivalently, $\chi^*_t$ may be constructed using the identification $\phi_t : \mC((z)) \simeq \calK$ induced by $t$, so that the following diagram commutes
	\[\begin{tikzcd}
	{\overline{\Sym}_{\mC}\left((\check{V}^{\can})^*\otimes\calK \right)} && {\mC\left[\Op_{\gogl}(\Spec \calK)\right]} \\
	\\
	{\overline{\Sym}_{\mC}\left((\check{V}^{\can})^*\otimes\mC((z)) \right)} && {\mC\left[\Op_{\gogl}(D^*)\right]}
	\arrow["{\chi^*_t}", from=1-1, to=1-3]
	\arrow["{\id\otimes\phi_t}", from=3-1, to=1-1]
	\arrow["{\phi^{\Op}_t}", from=1-3, to=3-3]
	\arrow["{\chi^*}"', from=3-1, to=3-3]
\end{tikzcd}\]
	Here $\phi_t^{\Op}$ is the isomorphism induced by the identification $\Spec \phi_t : \Spec \calK \to D^*$.
	\item To construct the analogue of $\chi^*_U$ notice that the map $\chi^*_V : (\check{V}^{\can})^* \to \mC[\Op_{\gogl}(D)]$ introduced in the above discussion induces a natural morphism \[(\check{V}^{\can})^*\otimes\calK \xrightarrow{\chi_V^*\otimes\id} \mC[\Op_{\gogl}(D)]\otimes\calK \to \Lie_\calK\left( \mC[\Op_{\gogl}(D)]\otimes \calO\right) = \frac{\mC[\Op_{\gogl}(D)]\otimes \calK}{\mathrm{Im} (T + \partial_t)}.\]
	Which, essentially as in the fixed coordinate case, upgrades to an isomorphism
	\[
		\chi^*_U : \overline{\Sym}_{\mC}\left( (\check{V}^{\can})^*\otimes\calK\right) \to \calU_{\calK}\left( \mC[\Op_{\gogl}(D)]\otimes \calO\right).
	\]
\end{itemize}

\begin{proposition}\label{rmk:descralphacenter}
    Let $\gamma_{\calK}$ be the isomorphism of Proposition \ref{prop:gammaisoFFnocoord}. Then the following diagram commutes
	\begin{equation}\label{eq:diagramgammacalychi}\begin{tikzcd}
	{\calU_{\calK}(\zeta^{\kappa_c}_{\calO}({\gog}))} && {\mC\left[ \Op_{\gogl}(\Spec \calK) \right]} \\
	\\
	{\calU_{\calK}\left(\mC[\Op_{\gogl}(D)]\otimes\calO\right)} && {\overline{\Sym}_{\mC}\left( (\check{V}^{\can})^* \otimes\calK \right)}
	\arrow["\gamma_{\calK}", from=1-1, to=1-3]
	\arrow["{\calU_{\calK}(\calY_t)}", from=3-1, to=1-1]
	\arrow["{\chi_t^*}"', from=3-3, to=1-3]
        \arrow["{\chi^*_{U}}", from=3-3, to=3-1]
	\end{tikzcd}\end{equation}
\end{proposition}

\begin{proof}
	Recall that by Proposition \ref{prop:gammaisoFFnocoord} the isomorphism $\gamma_{\calK}$ may be identified with the composition $(\phi^{\Op}_t)^{-1}\circ \tilde{\gamma}_{\mathrm{FF}} \circ (\calU_{\calK}(\tilde{\calY}_t))^{-1}$ and that the isomorphism $\tilde{\gamma}_{\mathrm{FF}}$ is given by the composition $\chi^* \circ (\chi^*_U)^{-1}$ as in Corollary \ref{coro:gammainvariantfeifrecasmaf}. Then to prove the Proposition it is enough to recall that the following diagrams commute:
	\[\begin{tikzcd}
	{\calU_{\calK}\left( \mC[\Op_{\gogl}(D)] \otimes \calO\right)} && {\overline{\Sym}_{\mC}\left((\check{V}^{\can})^* \otimes\calK \right)} \\
	\\
	{\calU_{\mC((z))}\left( \mC[\Op_{\gogl}(D)] \otimes \mC[[z]] \right)} && {\overline{\Sym}_{\mC}\left((\check{V}^{\can})^* \otimes\mC((z)) \right)}
	\arrow["{\chi^*_U}"', from=1-3, to=1-1]
	\arrow["{\calU(\id\otimes\phi_t)}", from=3-1, to=1-1]
	\arrow["{\Sym(\id \otimes\phi_t)}"', from=3-3, to=1-3]
	\arrow["{\chi^*_U}", from=3-3, to=3-1]
\end{tikzcd}\]
\[\begin{tikzcd}
	{\overline{\Sym}_{\mC}\left((\check{V}^{\can})^*\otimes\calK \right)} && {\mC\left[\Op_{\gogl}(\Spec \calK)\right]} \\
	\\
	{\overline{\Sym}_{\mC}\left((\check{V}^{\can})^*\otimes\mC((z)) \right)} && {\mC\left[\Op_{\gogl}(D^*)\right]}
	\arrow["{\chi^*_t}", from=1-1, to=1-3]
	\arrow["{\id\otimes\phi_t}", from=3-1, to=1-1]
	\arrow["{\phi^{\Op}_t}", from=1-3, to=3-3]
	\arrow["{\chi^*}"', from=3-1, to=3-3]
\end{tikzcd}\]
\end{proof}

\subsection{More on the topologies}

We will also need to make some remarks to state precisely how the topologies on $\mC[\Op_{\gogl}(\Spec \calK)]$ and on $Z_{\kappa_c}(\hat{\gog}_{\calK})$ (where $\hat{\gog}_{\calK}$ is the coordinate free version of the affine algebra built out of $\calK$) are related via the identification $\mC[\Op_{\gogl}(\Spec \calK)] = Z_{\kappa_c}(\hat{\gog}_{\calK})$ in a coordinate independent way. Even if we aim at a coordinate free description, we will compare the topologies via the isomorphism $\chi_t^* : \overline{\Sym}_{\mC}((\check{V}^{\can})^*\otimes \calK) \to \mC[\Op_{\gogl}(\Spec \calK)]$ which is induced by the choice of a coordinate and which should be thought of as giving a topological basis for $\mC[\Op_{\gogl}(\Spec \calK)]$. Recall that as discussed in Section \ref{ssec:canonicalrepresentativesopers} there exists a basis $x_l \in \check{V}^{\can}$ for which $[\frac{1}{2}\check{h}_0,x_l] = d_lx_l$, where $d_l +1$ are the exponents of $\check{\gog}$. We write $x_l^*$ for the corresponding dual basis.

\begin{definition}
	Before we relate the topologies we need to introduce some notation.
	\begin{enumerate}
		\item Given a non negative integer $N\geq 0$ we write $I_N = U_{\kappa_c}(\hat{\gog}_{\calK})(\gog\otimes\calO(-N))$ where $\calO(-N)$ is the $N$-th power of the maximal ideal of $\calO$; these ideals, by construction, define a basis for the topology of $U_{\kappa_c}(\hat{\gog}_{\calK})$. We write $I^Z_N = Z_{\kappa_c}(\hat{\gog}_{\calK})\cap I_N$;
		\item Given a non negative integer $N \geq 0$ we write $J_N \subset \overline{\Sym}_{\mC}((\check{V}^{\can})^*\otimes\calK)$ for the closed ideal generated by $\sum_l x_l^* \otimes \calO(-d_lN)$ so that the quotient $$\overline{\Sym}_{\mC}((\check{V}^{\can})^*\otimes\calK)/J_N = \Sym\left(\sum_l x_l^* \otimes (\calK/\calO(-d_lN))\right);$$
		\item Given a coordinate $t \in \calO$ we write $J_N^{\Op,t}$ for the image of the ideal $J_N$ along the isomorphism 
		$$\chi^*_t : \overline{\Sym}_{\mC}((\check{V}^{\can})^*\otimes\calK) \to \mC[\Op_{\gogl}(\Spec \calK)].$$
	\end{enumerate}
\end{definition}

\begin{proposition}\label{prop:comparisontopologiesonesingularity}
	With the above notation, we have that the Feigin-Frenkel isomorphism
	\[
		\mC\left[\Op_{\gogl}(\Spec \calK)\right] = Z_{\kappa_c}(\hat{\gog}_{\calK})
	\]
	identifies $J^{\Op,t}_N$ with $I^Z_N$.
\end{proposition}

\begin{proof}
	After identifying $\calK \simeq \mC((t))$, this can be found as a particular case of \cite[Corollaries 5.6.4, 5.6.5]{cas2023}. With respect to the notation of \emph{loc. cit.} we are in the case of $1$ singularity ($n=1$), specialized to $a_1 = 0$. In \emph{loc. cit.} the quotient $Z_{\kappa_c}(\hat{\gog}_{\calK})/I^Z_N$ is described as an algebra of polynomial algebra $\mC[P_{l,k}]_{k \in \mZ}/(P_{l,k_l})_{k_l \geq d_lN}$ (the "$j$" index of \emph{loc. cit.} disappears since we are in the $1$ singularity setting) and the $P_{i,k_i}$ are exactly the images of $x_l \otimes t^k$ along the map $\Psi^{V,t}_{\calK}$.
\end{proof}

\bibliography{biblio.bib}

@book {BDchirali,
    AUTHOR = {Beilinson, Alexander and Drinfeld, Vladimir},
     TITLE = {Chiral algebras},
    SERIES = {American Mathematical Society Colloquium Publications},
    VOLUME = {51},
 PUBLISHER = {American Mathematical Society, Providence, RI},
      YEAR = {2004},
     PAGES = {vi+375},
      ISBN = {0-8218-3528-9},
   MRCLASS = {17Bxx (14F43)},
  MRNUMBER = {2058353},
MRREVIEWER = {Francisco\ J.\ Plaza Mart\'in},
       DOI = {10.1090/coll/051},
       URL = {https://doi.org/10.1090/coll/051},
}

@article{raskinproof,
	author = {Raskin, Sam},
	title = {A geometric proof of the {Feigin}-{Frenkel} theorem},
	fjournal = {Representation Theory},
	journal = {Represent. Theory},
	issn = {1088-4165},
	volume = {16},
	pages = {489--512},
	year = {2012},
	language = {English},
	doi = {10.1090/S1088-4165-2012-00417-4},
	keywords = {17B65,81R10,14D24},
	zbMATH = {6384629},
	Zbl = {1302.17035}
}

@article{frenkeladvances,
	author = {Frenkel, Edward},
	title = {Wakimoto modules, opers and the center at the critical level},
	fjournal = {Advances in Mathematics},
	journal = {Adv. Math.},
	issn = {0001-8708},
	volume = {195},
	number = {2},
	pages = {297--404},
	year = {2005},
	language = {English},
	doi = {10.1016/j.aim.2004.08.002},
	keywords = {17B69,17B70,17B67},
	zbMATH = {2189411},
	Zbl = {1129.17014}
}

@book {frenkel2004vertex,
    AUTHOR = {Frenkel, Edward and Ben-Zvi, David},
     TITLE = {Vertex algebras and algebraic curves},
    SERIES = {Mathematical Surveys and Monographs},
    VOLUME = {88},
   EDITION = {Second},
 PUBLISHER = {American Mathematical Society, Providence, RI},
      YEAR = {2004},
     PAGES = {xiv+400},
      ISBN = {0-8218-3674-9},
   MRCLASS = {17B69 (14D21 14H81)},
  MRNUMBER = {2082709},
       DOI = {10.1090/surv/088},
       URL = {https://doi.org/10.1090/surv/088},
}

@article{BDopers,
  title={Opers},
  author={Beilinson, Alexander and Drinfeld, Vladimir},
  journal={arXiv preprint math/0501398},
  year={2005}
}

@article {feigin1992affine,
    AUTHOR = {Feigin, Boris and Frenkel, Edward},
     TITLE = {Affine {K}ac-{M}oody algebras at the critical level and
              {G}el' fand-{D}iki\u i\ algebras},
   JOURNAL = {Int. Jour. of Mod. Phys. A},
    VOLUME = {7},
    NUMBER = {supplement 1A},
      YEAR = {1992},
     PAGES = {197--215},
 PUBLISHER = {World Sci. Publ., River Edge, NJ},
      ISBN = {981-02-0955-X},
   MRCLASS = {17B67 (81R10)},
  MRNUMBER = {1187549},
MRREVIEWER = {A.\ Hamid\ Bougourzi},
       DOI = {10.1142/s0217751x92003781},
       URL = {https://doi.org/10.1142/s0217751x92003781},
}

@article {fortuna2022local,
    AUTHOR = {Fortuna, Giorgia and Lombardo, Davide and Maffei, Andrea and
              Melani, Valerio},
     TITLE = {Local opers with two singularities: the case of
              {$\mathfrak{sl}(2)$}},
   JOURNAL = {Comm. Math. Phys.},
  FJOURNAL = {Communications in Mathematical Physics},
    VOLUME = {394},
      YEAR = {2022},
    NUMBER = {3},
     PAGES = {1303--1360},
      ISSN = {0010-3616,1432-0916},
   MRCLASS = {17B67 (14D24 17B10 22E57)},
  MRNUMBER = {4470251},
MRREVIEWER = {William\ J.\ Cook},
       DOI = {10.1007/s00220-022-04430-w},
       URL = {https://doi.org/10.1007/s00220-022-04430-w},
}

@book {frenkel2007langlands,
    AUTHOR = {Frenkel, Edward},
     TITLE = {Langlands correspondence for loop groups},
    SERIES = {Cambridge Studies in Advanced Mathematics},
    VOLUME = {103},
 PUBLISHER = {Cambridge University Press, Cambridge},
      YEAR = {2007},
     PAGES = {xvi+379},
      ISBN = {978-0-521-85443-6},
   MRCLASS = {22E67 (17B67)},
  MRNUMBER = {2332156},
MRREVIEWER = {David\ Ben-Zvi},
}

@incollection {drinfeld1984lie,
    AUTHOR = {Drinfel'd, Vladimir and Sokolov, Vladimir},
     TITLE = {Lie algebras and equations of {K}orteweg-de {V}ries type},
 BOOKTITLE = {Current problems in mathematics, {V}ol. 24},
    SERIES = {Itogi Nauki i Tekhniki},
     PAGES = {81--180},
 PUBLISHER = {Akad. Nauk SSSR, Vsesoyuz. Inst. Nauchn. i Tekhn. Inform.,
              Moscow},
      YEAR = {1984},
   MRCLASS = {58F07 (17B67 35Q20)},
  MRNUMBER = {760998},
MRREVIEWER = {George\ Wilson},
}

@misc{stacks-project,
    shorthand    = {Stacks},
    author       = {The {Stacks Project Authors}},
    title        = {\textit{Stacks Project}},
    howpublished = {\url{https://stacks.math.columbia.edu}},
    year         = {2018},
  }

@misc{casmaffei1,
      title={Topological sheaves and spaces of distributions in the global case}, 
      author={Luca Casarin and Andrea Maffei},
      year={2025},
      eprint={2501.08935},
      archivePrefix={arXiv},
      primaryClass={math.AG},
      url={https://arxiv.org/abs/2501.08935},
      note={arXiv:2501.08935}
}

@article {cas2023,
    AUTHOR = {Casarin, Luca},
     TITLE = {A {F}eigin-{F}renkel theorem with {$n$} singularities},
   JOURNAL = {Adv. Math.},
  FJOURNAL = {Advances in Mathematics},
    VOLUME = {434},
      YEAR = {2023},
     PAGES = {Paper No. 109335, 85},
      ISSN = {0001-8708,1090-2082},
   MRCLASS = {17B65 (17B69)},
  MRNUMBER = {4651800},
       DOI = {10.1016/j.aim.2023.109335},
       URL = {https://doi.org/10.1016/j.aim.2023.109335},
}

@misc{casarin2025bundle,
      title={A note on the bundle underlying Opers}, 
      author={Luca Casarin},
      year={2025},
      eprint={2501.08923},
      archivePrefix={arXiv},
      primaryClass={math.AG},
      url={https://arxiv.org/abs/2501.08923}, 
}
\bibliographystyle{alpha}

\end{document}